\newtheorem{theorem}{Theorem} [section]
\newtheorem{lemma}[theorem]{Lemma}
\newtheorem{proposition}[theorem]{Proposition}
\newtheorem{corollary}[theorem]{Corollary}
\newtheorem{conjecture}[theorem]{Conjecture}
\newtheorem{example}[theorem]{Example}
\newtheorem{question}[theorem]{Question}
\newtheorem{remark}[theorem]{Remark}
\newcommand{\lift}[1]{{#1}^{\wedge}}
\theoremstyle{definition}
\newtheorem{definition}[theorem]{Definition}
\newenvironment{theorem_no_number}[1][]{\begin{trivlist}
		\item[\hskip \labelsep {\bfseries Theorem \def\temp{#1}\ifx\temp\empty  #1\else  #1\fi
			.}] \itshape}  {\end{trivlist}}
\newenvironment{prop_no_number}[1][]{\begin{trivlist}
		\item[\hskip \labelsep {\bfseries Proposition \def\temp{#1}\ifx\temp\empty  #1\else  #1\fi
			.}] \itshape}  {\end{trivlist}}
\newenvironment{conj_no_number}[1][]{\begin{trivlist}
		\item[\hskip \labelsep {\bfseries Conjecture \def\temp{#1}\ifx\temp\empty  #1\else  #1\fi			.}] \itshape}  {\end{trivlist}}
\newenvironment{question_no_number}[1][]{\begin{trivlist}
		\item[\hskip \labelsep {\bfseries Question \def\temp{#1}\ifx\temp\empty  #1\else  #1\fi			.}] \itshape}  {\end{trivlist}}
\newcommand{\R}{{\mathbb R}}
\newcommand{\N}{{\mathbb N}}
\newcommand{\Z}{{\mathbb Z}}
\newcommand{\C}{{\mathbb C}}
\newcommand{\Q}{{\mathbb Q}}
\newcommand{\cO}{\mathcal O}
\newcommand{\Inn}{\mathrm{Inn}}
\newcommand{\Aut}{\mathrm{Aut}}
\newcommand{\Sp}{\mathrm{Sp}}
\newcommand{\GL}{\mathrm{GL}}
\newcommand{\Ab}{\mathrm{Ab}}
\begin{document}

\author{Alex Evetts}
\title{Conjugacy Growth in the higher Heisenberg groups}

\begin{abstract}
	We calculate asymptotic estimates for the conjugacy growth function of finitely generated class 2 nilpotent groups whose derived subgroups are infinite cyclic, including the so-called higher Heisenberg groups. We prove that these asymptotics are stable when passing to commensurable groups, by understanding their twisted conjugacy growth. We also use these estimates to prove that, in certain cases, the conjugacy growth series cannot be a holonomic function.
\bigskip

\noindent 2020 Mathematics Subject Classification: 20F65, 20E45, 20F18.

\end{abstract}

\maketitle

\section{Introduction}\label{sec:intro}

The \emph{conjugacy growth function}, \(c_{G,S}(n)\), of a group \(G\) with respect to a finite generating set \(S\) counts the number of conjugacy classes that intersect the \(n\)-ball in the Cayley graph, just as the standard growth function counts the number of elements contained in the \(n\)-ball. The study of conjugacy growth was originally motivated by counting geodesics on Riemannian manifolds, for example in work of Margulis \cite{Margulis}, and since then has received significant attention in its own right, for example \cite{SolvableConjugacy}, \cite{LinearConjugacy}, \cite{OsinHull}. 

It is conjectured by Guba and Sapir \cite{GubaSapir} that for ``ordinary" groups, exponential standard growth implies exponential conjugacy growth. This has been verified in the case of hyperbolic, soluble, and linear groups amongst others. On the other hand, Osin \cite{Osin} has constructed groups of exponential standard growth where \emph{every} non-identity element is conjugate, and so the conjecture emphatically fails in these cases. Furthermore, Osin and Hull \cite{OsinHull} have shown that conjugacy growth fails to be a commensurability invariant in general. It is worth noting however that these counter-examples are not finitely presented.

In contrast, this paper investigates the polynomial end of the growth spectrum, namely virtually nilpotent groups (as suggested by a question of Guba and Sapir \cite{GubaSapir}). We generalise work of Babenko \cite{Babenko} to derive estimates for the conjugacy growth of a class of virtually nilpotent groups whose standard growth was studied by Stoll \cite{Stoll} and which contains the so-called higher Heisenberg groups. Some of this work appears in the author's PhD thesis \cite{Thesis}.

Section \ref{sec:prelims} introduces the necessary notions and proves some basic results. Section \ref{sec:stollgroups} summarises Stoll's classification of class 2 nilpotent groups with infinite cyclic derived subgroup, in which it is shown that such groups can be constructed from finitely many copies of the first Heisenberg group (the free nilpotent group of class 2 on 2 generators). The number of copies is called the \emph{Heisenberg rank} of the group. We also derive an explicit description of the automorphisms of such groups. In section \ref{sec:conjugacygrowth} we investigate the conjugacy growth of this class of groups. We prove the following Theorem, which follows Theorem 4.1 of \cite{Babenko}, but for a more general class of groups, and a more restricted range of metrics. Our proof uses more elementary arguments.
\begin{theorem_no_number}[\ref{thm:Hconjugacy}]
	Let $G$ be a finitely generated class 2 nilpotent group with infinite cyclic derived subgroup, with Heisenberg rank $r$. Then there exists $s\in\N$ such that
	\[c_G(n)\sim\begin{cases} n^{s+2}\log n & r=1 \\ n^{s+2r} & r>1.\end{cases} \]
\end{theorem_no_number}
In section \ref{sec:virtconjgrowth} we prove the following necessary and sufficient condition for conjugacy growth to be preserved in finite extensions, which depends on understanding the \emph{twisted conjugacy growth}, \(c^{\phi}_H\), of Definition \ref{def:twistedconj}.
\begin{prop_no_number}[\ref{prop:twistedconj}]
	Let \(H\) be any finitely generated group. Then \(c^{\phi}_H(n)\preccurlyeq c_H(n)\) for every finite-order automorphism \(\phi\in\Aut(H)\) if and only if every finite extension \(G\) of \(H\) satisfies \(c_G(n)\sim c_H(n)\).
\end{prop_no_number}
This allows us to use the description of automorphisms to demonstrate that passing to a finite index supergroup does not alter the conjugacy growth function (up to the usual equivalence).
\begin{theorem_no_number}[\ref{thm:virtH}]
	Suppose that \(H\) is a class 2 nilpotent group with infinite cyclic derived subgroup. If a group \(G\) contains \(H\) as a finite index subgroup, then \(G\) and \(H\) have equivalent conjugacy growth functions.
\end{theorem_no_number}
This result naturally suggests the question of quasi-isometric invariance.
\begin{question_no_number}[\ref{que:qi}]
	In which classes of finitely generated groups is conjugacy growth a quasi-isometry invariant?
\end{question_no_number}
As noted above, there are certainly groups which do not have this property. On the other hand, it is not hard to see that conjugacy growth is a quasi-isometry invariant of virtually abelian groups (see Proposition \ref{prop:vabQI}). We conjecture that this behaviour extends to virtually nilpotent groups in general. More specifically, we make the following conjecture, in the spirit of Theorem \ref{thm:virtH}.
\begin{conj_no_number}[\ref{conj:nilp}]
	The conjugacy growth function of a finitely generated nilpotent group depends only on the structure of its lower central series.
\end{conj_no_number}

A closely related measure of conjugacy growth is the formal power series whose coefficients are the values of the conjugacy growth function. Ciobanu, Hermiller, Holt and Rees \cite{CHHR} and Antol\'in and Ciobanu \cite{FormalConjugacyGrowth} confirmed a conjecture of Rivin \cite{Rivin1}, that the conjugacy growth series of a hyperbolic group is a rational function if and only if the group is virtually cyclic (or finite). Continuing in a similar vein, the author proved the rationality of the series for all virtually abelian groups \cite{Evetts}, and Gekhtman and Yang proved transcendence for relatively hyperbolic groups and certain acylindrically hyperbolic groups \cite{ContractingElements}. All of the above results apply to any choice of finite generating set, and lead to the following conjecture.
\begin{conjecture}[Conjecture 7.2 of \cite{CEH}]\label{conj:seriesconj}
	The conjugacy growth series of any finitely presented group that is not virtually abelian is transcendental.
\end{conjecture}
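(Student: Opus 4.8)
The plan is to prove transcendence by combining two classical obstructions to algebraicity with the coarse stratification of finitely presented groups by growth type. The central analytic fact is that an algebraic power series has only power-type (Puiseux) singularities: by the Newton--Puiseux analysis of algebraic functions, if $\sum_n c_G(n)z^n$ were algebraic then its coefficients would admit an asymptotic expansion $c_G(n)\sim\sum_j C_j\,\omega_j^{\,n}\,n^{e_j}$ with each $e_j\in\Q\setminus\{-1,-2,\dots\}$ and with \emph{no} logarithmic factor, so any logarithmic contribution to the singular behaviour already forces transcendence. The complementary arithmetic fact is Christol's theorem: since $c_G(n)\in\N$, algebraicity of the series over $\Q$ would force $\big(c_G(n)\bmod p\big)$ to be a $p$-automatic sequence, hence to have finite $p$-kernel, for every prime $p$. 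The goal is to show that one of these two conditions fails for every finitely presented group that is not virtually abelian.

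By Gromov's theorem a finitely presented group of polynomial growth is virtually nilpotent, and discarding the virtually abelian case leaves groups virtually nilpotent of class at least $2$. Here I would extend Theorem~\ref{thm:Hconjugacy} beyond the infinite cyclic derived subgroup setting, in the direction of Conjecture~\ref{conj:nilp}, to obtain the exact polynomial or polynomial-times-logarithm asymptotics of $c_G(n)$. The Heisenberg rank one estimate $c_G(n)\sim n^{s+2}\log n$ already contributes a logarithmic singular term and so yields transcendence immediately. When the asymptotics are a pure power $n^{s+2r}$ the leading term is compatible with algebraicity, so I would instead invoke the arithmetic obstruction, computing $c_G(n)$ finely enough from the lattice-point structure underlying the estimates of Babenko~\cite{Babenko} and Stoll~\cite{Stoll} to locate a prime $p$ for which $\big(c_G(n)\bmod p\big)$ has infinite $p$-kernel.

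For groups of exponential growth the guiding principle is the closed-geodesic analogy: counting conjugacy classes is a count of cyclic words, and the passage from elements to classes introduces the primitivity (M\"obius) factor responsible for a prime-geodesic asymptotic of the shape $c_G(n)\sim C\,\rho^{-n}/n$. The exponent $-1$ corresponds precisely to a logarithmic singularity at $z=\rho$, which an algebraic function cannot possess, so wherever such an asymptotic holds the series is transcendental. I would push the contracting-element framework of Gekhtman and Yang~\cite{ContractingElements} as far as it will go --- hyperbolic, relatively hyperbolic, and more generally acylindrically hyperbolic groups admitting a proper action with a contracting element --- so as to capture most non-elementary finitely presented groups in a single argument.

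The main obstacle is the residual regime of finitely presented groups that are amenable, of exponential growth, and admit no contracting element, the soluble Baumslag--Solitar groups and their relatives being the model examples. Neither the nilpotent asymptotic machinery nor the contracting-element mechanism applies there, and, because the conjugacy problem is undecidable among finitely presented groups, no closed computational description of $c_G(n)$ is available as a fallback. Progress in this stratum will require direct structural computation of conjugacy growth for soluble groups, continuing~\cite{SolvableConjugacy}, paired with a transcendence criterion --- most plausibly the mod-$p$ non-automaticity obstruction above --- that can be verified from the algebraic structure of the group rather than from closed-form asymptotics. It should also be noted that no finitely presented group of intermediate growth is known; were such groups to exist they would constitute a further, presumably harder, case. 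I expect this amenable, exponential-growth, contracting-element-free stratum to be where the conjecture genuinely resists, which is why the statement remains open.
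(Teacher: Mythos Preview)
The statement you are attempting to prove is labelled as a \emph{conjecture} in the paper, not a theorem; the paper offers no proof of it and indeed presents it as open (Conjecture~\ref{conj:seriesconj}). The paper's contribution in this direction is limited to Corollary~\ref{cor:hrank1}, which establishes non-holonomicity (hence transcendence) only for groups with Heisenberg rank~$1$, via the $n^d\log n$ asymptotic and Theorem~\ref{thm:nonholo}. There is therefore nothing to compare your proposal against: the paper simply does not prove this statement.

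Your write-up is accordingly not a proof but a programme, and you acknowledge as much in the final sentence. Several of the steps you outline are themselves open problems. Extending Theorem~\ref{thm:Hconjugacy} to all class~$\geq 2$ nilpotent groups is precisely Conjecture~\ref{conj:nilp}, which the paper leaves unresolved; even for the groups the paper \emph{does} treat, when $r\geq 2$ the asymptotic $c_G(n)\sim n^{s+2r}$ carries no logarithm, and your fallback of showing that $c_G(n)\bmod p$ has infinite $p$-kernel is a hope rather than an argument --- nothing in the paper (or in Babenko/Stoll) gives the exact coefficient sequence modulo a prime. The contracting-element stratum is handled in the literature you cite, but the amenable exponential-growth stratum and the (hypothetical) intermediate-growth stratum are, as you say, genuinely open. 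So the honest summary is that your proposal correctly identifies where partial results exist and where the obstacles lie, but it does not constitute a proof, and the paper does not claim one either.
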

This is further supported by generating set specific calculations for, amongst others, the soluble Baumslag-Solitar groups \cite{CEH}, certain wreath products \cite{Mercier} and graph products \cite{CHM}. The asymptotic estimates derived in this paper have implications for some conjugacy growth series (see Corollary \ref{cor:hrank1}), providing further evidence for this conjecture.


\section{Preliminaries}\label{sec:prelims}

We begin with the basic definitions and results. In this paper, $\N$ will denote the non-negative integers and $\N_+$ the positive integers. We occasionally use the notation \(f(n)=\cO\left(g(n)\right)\) to indicate that there exists a constant \(C\geq1\) with \(f(n)\leq Cg(n)\) for large enough \(n\), and the notation \(f(n)=o\left(g(n)\right)\) to indicate that \(\lim_{n\to\infty}\frac{f(n)}{g(n)}=0\).

\subsection{Growth and conjugacy growth}

\begin{definition}\label{def:stangrowth}
	Let \(G\) be a finitely generated group, and \(S\) a choice of finite generating set. We write \(\{S\cup S^{-1}\}^*\) to denote the language of all words over the alphabet consisting of the elements of \(S\) and their inverses (i.e. the free monoid on \(S\cup S^{-1}\)).
	\begin{enumerate}
		\item The \emph{word length} of \(g\in G\) with respect to \(S\) is
		\[|g|_S=\min\{|w| \mid w\in \{S\cup S^{-1}\}^*,~w=_G g\}.\]
		\item The (cumulative) \emph{standard growth function} of $G$ with respect to $S$ is
		\[\beta_{G,S}(n)=\#\{g\in G \mid |g|_S\leq n\}.\]
	\end{enumerate}
\end{definition}

\begin{definition}\label{def:conjgrowth}
	Let $G$ be a finitely generated group, and $S$ a choice of finite generating set. Denote by \(\mathcal{C}_G\) the set of conjugacy classes of \(G\). 
	\begin{enumerate}
		\item For \(\kappa\in\mathcal{C}_G\), define the length of \(\kappa\) with respect to \(S\) as
		\[|\kappa|_S=\min\{|g|_S\mid g\in\kappa\}.\]
		\item The (cumulative) \emph{conjugacy growth function} of \(G\) with respect to \(S\) is
		\[c_{G,S}(n)=\#\{\kappa\in\mathcal{C}_G \mid |\kappa|_S\leq n\}\]
	\end{enumerate}
\end{definition}
Geometrically, the standard growth function counts the elements in the ball of radius \(n\) in the Cayley graph of \(G\) with respect to \(S\), and the conjugacy growth function counts the number of distinct conjugacy classes intersecting the same ball. One could also study the `strict' standard and conjugacy growth functions, by considering the sphere instead of the ball, but for most purposes their qualitative behaviour is the same.

We will use the usual notion of equivalence of growth functions.
\begin{definition}
	Let $f,g\colon\N\rightarrow\N$ be two functions. We write $f\preccurlyeq g$ if there exists $\lambda>1$ such that
	\[f(n)\leq\lambda g(\lambda n)+\lambda\] for all $n\in\N$. If $f\preccurlyeq g$ and $g\preccurlyeq f$ then we write $f\sim g$, and say that the functions are \emph{equivalent}. Note that this defines an equivalence relation. We write $f\prec g$ if $f\preccurlyeq g$ but it is not the case that \(f\sim g\).
\end{definition}

It is a classical result that the equivalence class of the standard growth function of a finitely generated group does not depend on the choice of generating set. It is less well known that the same is true of the conjugacy growth function.
\begin{proposition}\label{prop:gensetequiv}
	Let \(S\) and \(T\) be two finite generating sets for a group $G$. Then
	\begin{enumerate}
		\item \(\beta_{G,S}\sim\beta_{G,T}\), and
		\item \(c_{G,S}\sim c_{G,T}\).
	\end{enumerate}
\end{proposition}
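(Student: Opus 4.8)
The plan is to exploit the standard fact that two finite generating sets are quasi-isometrically related via the identity map on $G$, and to show that the conjugacy length function behaves well under this comparison. Concretely, since $S$ and $T$ are both finite generating sets, there is a constant $M\geq1$ such that $|g|_T\leq M|g|_S$ and $|g|_S\leq M|g|_T$ for every $g\in G$ (take $M$ to be the maximum $T$-length of any element of $S\cup S^{-1}$, and symmetrically). Part (1) is the classical statement, so I will concentrate on part (2).

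For part (2), the key observation is that the inequality $|g|_T\leq M|g|_S$ passes directly to conjugacy classes: for any conjugacy class $\kappa\in\mathcal C_G$, if $g\in\kappa$ realises $|\kappa|_S=|g|_S$, then $|\kappa|_T\leq |g|_T\leq M|g|_S=M|\kappa|_S$. Hence $\{\kappa\in\mathcal C_G\mid |\kappa|_S\leq n\}\subseteq\{\kappa\in\mathcal C_G\mid |\kappa|_T\leq Mn\}$, which gives $c_{G,S}(n)\leq c_{G,T}(Mn)\leq M\, c_{G,T}(Mn)+M$, so $c_{G,S}\preccurlyeq c_{G,T}$ with $\lambda=M$. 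The reverse inequality $c_{G,T}\preccurlyeq c_{G,S}$ follows by the symmetric argument, swapping the roles of $S$ and $T$. Combining the two gives $c_{G,S}\sim c_{G,T}$.

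I do not expect any serious obstacle here; the statement is essentially a bookkeeping exercise once one notices that taking the minimum of word lengths over a conjugacy class is monotone under the pointwise comparison $|\cdot|_T\leq M|\cdot|_S$. The one point that deserves a line of care is that the minimum defining $|\kappa|_S$ is attained (or at least approached) by some $g\in\kappa$, which is immediate since word lengths are non-negative integers, so the set $\{|g|_S\mid g\in\kappa\}$ has a least element. With that in hand, the inclusion of balls of conjugacy classes is exact and the equivalence follows. For completeness I would also remark that the same argument, applied to elements rather than conjugacy classes, recovers part (1).
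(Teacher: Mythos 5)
Your argument is correct and is exactly the standard comparison-of-generating-sets argument that the paper invokes (it simply cites the classical proof for part (1) and says part (2) is "proved similarly"); your write-up spells out precisely that "similar" argument, with the observation that $|\kappa|_T\leq M|\kappa|_S$ follows by evaluating $|\cdot|_T$ at a representative realising $|\kappa|_S$. No issues.
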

\begin{proof}
	The proof of the first part is standard, see for example \cite{Mann}. The second statement is proved similarly.
\end{proof}
Another key fact that we will use is that standard growth is a quasi-isometry invariant.
\begin{proposition}\label{prop:betaQI}
	Let \(G\) and \(H\) be quasi-isometric groups. Then \(\beta_G\sim \beta_H\).
\end{proposition}
This is not the case for conjugacy growth.
\begin{theorem_no_number}[7.2 of \cite{OsinHull}]
	There exists a finitely generated group \(G\) and a finite index subgroup \(H\leq G\) such that \(H\) has 2 conjugacy classes while \(G\) is of exponential conjugacy growth.
\end{theorem_no_number}

We can study the growth of a subset of \(G\) with respect to the same metric. This is referred to as \emph{relative} growth. Formally, we have the following definition.
\begin{definition}
	The \emph{relative growth function} of a subset \(U\) of a group \(G\) with respect to a finite generating set \(S\) is the following function
	\[\beta_U(n)=\#\{g\in U\mid |g|_S\leq n\}.\]
\end{definition}

In the following lemma, we see that the relative growth of a finite index subgroup is equivalent to that of its cosets.
\begin{lemma}\label{lem:relgrowth}
	Let \(H\) be a finite index subgroup of \(G\), and consider the coset \(tH\) for some \(t\in G\). Then the relative growth of \(tH\) is equivalent to the growth of \(G\).
\end{lemma}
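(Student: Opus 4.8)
The plan is to show both inequalities $\beta_{tH} \preccurlyeq \beta_G$ and $\beta_G \preccurlyeq \beta_{tH}$. The first is immediate since $tH \subseteq G$, so $\beta_{tH}(n) \leq \beta_G(n)$ for every $n$. The content is in the reverse direction, where I must show that the coset $tH$ captures a ``positive proportion'' of the ball of radius $n$ in a metrically controlled way.

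First I would fix a finite generating set $S$ for $G$ and, since $[G:H] < \infty$, choose a finite transversal $\{t_1, \dots, t_k\}$ for the right cosets $H t_i$ (working with right cosets of $H$, or left — one picks whichever is convenient; I will arrange things so that multiplication on the correct side moves between cosets). Set $D = \max_i |t_i|_S$. Every $g \in G$ with $|g|_S \leq n$ lies in some coset $H t_i$, so the balls of radius $n$ in the various cosets $Ht_i$ partition the ball of radius $n$ in $G$; hence by pigeonhole at least one coset $Ht_j$ satisfies $\beta_{Ht_j}(n) \geq \beta_G(n)/k$. The key step is then to transport this to the specific coset $tH$ (or $Ht$): given an element $ht_j$ with $|ht_j|_S \leq n$, I want to produce an element of $tH$ of comparable length. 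Writing $t_j = (t^{-1}t_j')$ appropriately, or more simply noting that left-multiplication by a fixed element $u$ of bounded length $|u|_S \leq D'$ carries the coset $Ht_j$ bijectively onto a coset of the form... — here I need to be a little careful about left versus right cosets. The clean way: $tH$ and $Ht_j$ are both finite-index cosets; pick any single element $x \in tH$ and observe that the map $h t_j \mapsto x t_j^{-1} h^{-1}\cdots$ is getting complicated, so instead I would argue as follows. The set $tH$ contains, for each $i$, at most one coset-representative-shift of $Ht_i$: concretely, $t_i H = t_i t^{-1} \cdot tH$, and $\beta_{tH}(n) \geq \beta_{t_i H}(n - |t_i t^{-1}|_S) \geq \beta_{t_iH}(n-D')$ where $D' = \max_i |t_i t^{-1}|_S$, because multiplying an element $g \in t_iH$ on the left by the fixed element $t t_i^{-1}$ (of length $\leq D'$) lands in $tH$ and changes length by at most $D'$. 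Combining with the pigeonhole bound applied at radius $n - D'$ gives $\beta_{tH}(n) \geq \beta_{G}(n-D')/k$, which yields $\beta_G \preccurlyeq \beta_{tH}$ after absorbing the additive shift and the multiplicative constant $k$ into the definition of $\sim$.

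The main obstacle — really the only subtlety — is bookkeeping the distinction between left and right cosets and making sure the bounded-length element used to translate between two cosets multiplies on the correct side so that it genuinely maps one coset onto the other while distorting word length only additively. Once the sides are lined up, everything is a one-line application of the triangle inequality for $|\cdot|_S$ together with finiteness of the index, and the equivalence $\beta_{tH} \sim \beta_G$ follows directly from the definition of $\sim$ (which tolerates exactly this kind of multiplicative and additive slack). Note this also reproves that $\beta_H \sim \beta_G$ for $H$ finite index, the case $t = 1$, consistent with Proposition~\ref{prop:betaQI}.
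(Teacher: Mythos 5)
Your argument is correct, and it takes a genuinely more elementary route than the paper's. The paper's proof is two lines: $tH$ is a translate of $H$, hence quasi-isometric to $H$ with the induced metric, and $H$ is quasi-isometric to $G$ by finite index, so Proposition \ref{prop:betaQI} gives $\beta_G\sim\beta_H\sim\beta_{tH}$; implicitly this uses that the relative growth of a finite-index subgroup agrees, up to $\sim$, with its intrinsic growth. You instead stay entirely inside $(G,|\cdot|_S)$: pigeonhole over the finitely many cosets meeting the ball of radius $n-D'$ yields one coset carrying at least $\beta_G(n-D')/k$ elements, and left multiplication by the bounded element $t t_j^{-1}$ injects that coset into $tH$ while increasing lengths by at most $D'$, so $\beta_{tH}(n)\geq\beta_G(n-D')/k$; combined with the trivial containment bound this gives $\beta_{tH}\sim\beta_G$. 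What your approach buys is self-containedness — no appeal to quasi-isometry invariance of growth and no need to compare the relative and intrinsic metrics on $H$ — at the cost of a little bookkeeping; the paper's buys brevity by delegating to Proposition \ref{prop:betaQI}. Two small cleanups: fix the transversal for \emph{left} cosets $t_1H,\dots,t_kH$ from the outset (your opening paragraph takes right cosets, which is what causes the exploratory detour in the middle), and note that $|t t_i^{-1}|_S=|t_i t^{-1}|_S$ because word length is invariant under inversion, so your constant $D'$ does bound the multiplier you actually use. With those fixes the closing computation is exactly right, and, as you observe, the case $t=1$ recovers the standard fact that the relative growth of a finite-index subgroup is equivalent to $\beta_G$.
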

\begin{proof}
	Since \(tH\) is simply a translation of \(H\), \(H\) and \(tH\) are quasi-isometric. Since \(G\) and \(H\) are also quasi-isometric, Proposition \ref{prop:betaQI} implies that \(\beta_G\sim\beta_H\sim\beta_{tH}\).
\end{proof}

We will need the following result of Breuillard and Cornulier.
\begin{lemma}[Lemma 3.1 (2) of \cite{SolvableConjugacy}]\label{lem:BC3.1(2)}
	Let $H$ be a finite index subgroup of $G$. Then $c_H\preccurlyeq c_G$.
\end{lemma}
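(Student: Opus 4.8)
The plan is to exhibit the natural ``forgetful'' map on conjugacy classes and to check that it is both uniformly finite-to-one and length-decreasing up to a multiplicative constant. By Proposition \ref{prop:gensetequiv} it suffices to verify $c_H\preccurlyeq c_G$ for one convenient pair of finite generating sets, so fix a finite generating set $S$ for $G$ and a finite generating set $T$ for $H$, and put $C=\max_{t\in T}|t|_S$, so that $|h|_S\leq C|h|_T$ for every $h\in H$. Since every $H$-conjugacy class is contained in a unique $G$-conjugacy class, there is a well-defined map $\Phi\colon\mathcal{C}_H\to\mathcal{C}_G$ sending an $H$-class $\kappa$ to the $G$-class $\kappa^G$ that contains it. If $|\kappa|_T\leq n$, choosing $g\in\kappa$ with $|g|_T\leq n$ gives an element $g\in\Phi(\kappa)$ with $|g|_S\leq Cn$, so $|\Phi(\kappa)|_S\leq Cn$; hence $\Phi$ restricts to a map from $\{\kappa\in\mathcal{C}_H\mid|\kappa|_T\leq n\}$ into $\{\tilde\kappa\in\mathcal{C}_G\mid|\tilde\kappa|_S\leq Cn\}$.

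The crux is to bound the fibres of $\Phi$ by $m:=[G:H]$. Let $\tilde\kappa\in\mathcal{C}_G$ meet $H$ and pick $g\in\tilde\kappa\cap H$; every $H$-class in $\Phi^{-1}(\tilde\kappa)$ is then an $H$-conjugacy class contained in $\tilde\kappa\cap H=\{xgx^{-1}\mid x\in G,\ xgx^{-1}\in H\}$. Writing $X=\{x\in G\mid xgx^{-1}\in H\}$, one checks that $X$ is closed under left multiplication by $H$, hence is a union of at most $m$ left cosets of $H$, and that $x_1gx_1^{-1}$ and $x_2gx_2^{-1}$ are $H$-conjugate exactly when $x_1$ and $x_2$ lie in the same $(H,C_G(g))$-double coset. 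Since each such double coset is a nonempty union of left $H$-cosets and $X$ contains at most $m$ of these, the set $\tilde\kappa\cap H$ is a union of at most $m$ $H$-conjugacy classes; that is, $|\Phi^{-1}(\tilde\kappa)|\leq m$.

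Combining the two observations yields $c_{H,T}(n)\leq m\cdot c_{G,S}(Cn)$ for all $n$, and since $c_{G,S}$ is non-decreasing this gives $c_{H,T}(n)\leq\lambda\,c_{G,S}(\lambda n)+\lambda$ for a suitable $\lambda>1$, i.e. $c_H\preccurlyeq c_G$. The one genuinely substantive point is the double-coset count showing that a single $G$-conjugacy class splits into at most $[G:H]$ classes over $H$; the length comparison and the change of generating sets are routine bookkeeping. (One could instead first replace $H$ by its normal core in $G$ to reduce to the case where $H$ is normal, but the double-coset argument works directly and makes this unnecessary.)
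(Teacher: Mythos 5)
Your proof is correct. The paper itself offers no argument for this lemma -- it is quoted directly from Breuillard--Cornulier (Lemma 3.1(2) of \cite{SolvableConjugacy}) -- and your write-up supplies the standard proof of exactly that result: the map $\Phi\colon\mathcal{C}_H\to\mathcal{C}_G$ is length-controlled after fixing generating sets (legitimate by Proposition \ref{prop:gensetequiv}), and the double-coset computation correctly shows each $G$-class meeting $H$ splits into at most $[G:H]$ $H$-classes, since $X=\{x\in G\mid xgx^{-1}\in H\}$ is a union of $(H,C_G(g))$-double cosets, each containing at least one coset of the form $Hx$, of which $X$ contains at most $[G:H]$. The only blemishes are cosmetic: the sets $Hx$ you invoke are right cosets rather than left cosets (the count is the same), and you could note explicitly that $H$ is finitely generated because it has finite index in the finitely generated group $G$.
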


The next Lemma shows that conjugacy growth behaves well with respect to direct products. Here, and in the rest of the paper, we use the standard notation \([g]\) to denote the conjugacy class of a group element \(g\).
\begin{lemma}\label{lem:directprod}
	Let $G$ and $H$ be groups generated by finite sets \(S\) and \(T\) respectively. Then $c_{G\times H,S\cup T}\sim c_{G,S}\cdot c_{H,T}$.
\end{lemma}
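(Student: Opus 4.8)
The plan is to establish the two inequalities $c_{G\times H}\preccurlyeq c_{G}\cdot c_{H}$ and $c_{G}\cdot c_{H}\preccurlyeq c_{G\times H}$ separately, using the fact that conjugacy in a direct product decomposes coordinatewise: two elements $(g_1,h_1)$ and $(g_2,h_2)$ of $G\times H$ are conjugate if and only if $g_1\sim g_2$ in $G$ and $h_1\sim h_2$ in $H$. Hence the set $\mathcal{C}_{G\times H}$ of conjugacy classes is in natural bijection with $\mathcal{C}_G\times\mathcal{C}_H$, via $[(g,h)]\mapsto([g],[h])$.

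First I would prove $c_{G\times H}\preccurlyeq c_G\cdot c_H$. Fix the generating set $S\cup T$ for $G\times H$, so that $|(g,h)|_{S\cup T}=|g|_S+|h|_T$ (since the $S$-letters and $T$-letters commute and contribute to disjoint coordinates, a geodesic word splits into a geodesic in $G$ and a geodesic in $H$). If a conjugacy class $[(g,h)]$ meets the $n$-ball, then there is a representative with $|g|_S+|h|_T\le n$, so $|[g]|_S+|[h]|_T\le n$, and in particular each of $|[g]|_S,|[h]|_T\le n$. Therefore the map $[(g,h)]\mapsto([g],[h])$ sends $\{\kappa\in\mathcal{C}_{G\times H}\mid|\kappa|_{S\cup T}\le n\}$ injectively into $\{[g]\mid|[g]|_S\le n\}\times\{[h]\mid|[h]|_T\le n\}$, giving $c_{G\times H,S\cup T}(n)\le c_{G,S}(n)\,c_{H,T}(n)$.

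For the reverse direction, given conjugacy classes $[g]\in\mathcal{C}_G$ and $[h]\in\mathcal{C}_H$ with $|[g]|_S\le n$ and $|[h]|_T\le n$, choose minimal-length representatives $g,h$ and form $(g,h)\in G\times H$ with $|(g,h)|_{S\cup T}=|g|_S+|h|_T\le 2n$, so $[(g,h)]$ meets the $2n$-ball. Distinct pairs $([g],[h])$ yield distinct classes $[(g,h)]$ by the coordinatewise description of conjugacy, so $c_{G,S}(n)\,c_{H,T}(n)\le c_{G\times H,S\cup T}(2n)$, which gives $c_G\cdot c_H\preccurlyeq c_{G\times H}$ with $\lambda=2$. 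Combining the two inequalities and invoking Proposition \ref{prop:gensetequiv} to remove the dependence on the chosen generating set yields $c_{G\times H}\sim c_G\cdot c_H$.

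The only point requiring genuine care — and the step I would expect to be the main obstacle — is the geodesic-splitting claim $|(g,h)|_{S\cup T}=|g|_S+|h|_T$. This is intuitively clear but deserves a clean argument: given any word $w$ over $S\cup S^{-1}\cup T\cup T^{-1}$ representing $(g,h)$, deleting all $T^{\pm1}$-letters leaves a word representing $g$ and deleting all $S^{\pm1}$-letters leaves a word representing $h$ (since the two families of generators act on independent coordinates), so $|w|=|w|_S\text{-part}+|w|_T\text{-part}\ge|g|_S+|h|_T$; the reverse inequality is immediate by concatenation. Once this is in hand, the rest is the elementary bookkeeping above.
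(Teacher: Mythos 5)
Your proposal is correct and follows essentially the same route as the paper: both use the coordinatewise decomposition $[(g,h)]=[g]\times[h]$ together with the splitting of a word over $S\cup T$ into its $S$-part and $T$-part to get $c_{G\times H}(n)\le c_G(n)\,c_H(n)$ and $c_G(n)\,c_H(n)\le c_{G\times H}(2n)$. Your explicit geodesic-splitting claim $|(g,h)|_{S\cup T}=|g|_S+|h|_T$ is just a cleaner packaging of the paper's ``rearranging'' step, and the final appeal to Proposition \ref{prop:gensetequiv} is unnecessary since the statement is for the fixed generating set $S\cup T$.
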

\begin{proof}
	First, note that for $(g,h)\in G\times H$ we have \[[(g,h)]=\{(g,h)^{(x,y)}\mid(x,y)\in G\times H\}=\{(g^x,h^y)\mid(x,y)\in G\times H\}=[g]\times[h].\] Now if $|[(g,h)]|_{S\cup T}\leq n$ then there exists $(x,y)\in G\times H$ with $(x,y)(g,h)(x,y)^{-1}=u_1u_2\cdots u_l$ for some $u_i\in S\cup T$, $l\leq n$. Rearranging, we find elements $s_1,\ldots,s_{l_1}\in S$ and $t_1,\ldots,t_{l_2}\in T$, with $l_1+l_2=l$, so that \[(xgx^{-1},yhy^{-1})=s_1\cdots s_{l_1}t_1\cdots t_{l_2}.\] Therefore $|xgx^{-1}|_S\leq l_1\leq n$ and $|yhy^{-1}|_T\leq l_2\leq n$, and so $|[g]|_S\leq n$ and $|[h]|_T\leq n$. Thus $c_{G\times H}(n)\leq c_G(n)\cdot c_H(n)$.
	
	Conversely, suppose $|[g]|_S\leq n$ and $|[h]|_S\leq n$. Then there are elements $\gamma\in G$ and $\delta\in H$ such that $|\gamma g\gamma^{-1}|_S\leq n$ and $|\delta h\delta^{-1}|_T\leq n$, so $|(\gamma g\gamma^{-1},\delta h\delta^{-1})|_{S\cup T}\leq 2n$. But $(\gamma g\gamma^{-1},\delta h\delta^{-1})=(\gamma,\delta)(g,h)(\gamma,\delta)^{-1}$ and so $|[(g,h)]|_{S\cup T}\leq 2n$. Thus $c_G(n)\cdot c_H(n)\leq c_{G\times H}(2n)$, giving the required result.
\end{proof}

\subsection{Nilpotent groups}
We recall the definition of a nilpotent group in order to fix some notation. For elements \(g,h\) of some group \(G\), denote their commutator \([g,h]=ghg^{-1}h^{-1}\). For a pair of subgroups \(U,V\) of \(G\), let \([U,V]=\left\langle [u,v]\mid u\in U, v\in V\right\rangle\). For any group \(G\), let \(G^{(0)}=G\) and inductively define the \(i\)-fold commutator subgroup \(G^{(i)}=[G^{(i-1)},G]\). Recall that a group \(G\) is \emph{nilpotent of class \(c\)} if and only if \(G^{(c)}=\{1\}\) and \(G^{(c-1)}\neq\{1\}\). In particular, the nilpotent groups of class 1 are precisely the abelian groups. We write \(\mathrm{Ab}(G)=G/G^{(i)}\) for the abelianisation of \(G\).
\begin{definition}
	Let \(G\) be a finitely generated group. Then each quotient \(G^{(i)}/G^{(i+1)}\) is a finitely generated abelian group. Denote the torsion-free rank of \(G^{i}/G^{i+1}\) by \(r_i\in\N\), so that \(G^{(i)}/G^{(i+1)}\cong \Z^{r_i}\times T\) for some finite abelian group \(T\).
\end{definition}

\begin{theorem}[Bass-Guivarc'h \cite{Bass}]\label{thm:BG}
	Let \(G\) be a finitely generated nilpotent group. Then the standard growth function \(\beta_G(n)\) is equivalent to the polynomial \(n^d\) where \[d=\sum_{i=0}^{c-1} (i+1)r_i.\]
\end{theorem}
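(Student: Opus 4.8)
The plan is to reduce the theorem to a weighted lattice-point count. Since a finitely generated nilpotent group is polycyclic, I would first fix a Mal'cev-type generating sequence adapted to the lower central series: for each $i$ with $0\le i\le c-1$ choose elements $u_{i,1},\dots,u_{i,r_i}\in G^{(i)}$ projecting to a basis of the free part of $G^{(i)}/G^{(i+1)}$, together with finitely many further elements spanning the torsion of each such layer, and let $S$ be the union of all of these; by Proposition~\ref{prop:gensetequiv} this choice is harmless. Then every $g\in G$ has a unique normal form
\[
g=\Big(\prod_{j}u_{0,j}^{a_{0,j}}\Big)\Big(\prod_{j}u_{1,j}^{a_{1,j}}\Big)\cdots\Big(\prod_{j}u_{c-1,j}^{a_{c-1,j}}\Big)\,\tau(g),
\]
where $\tau(g)$ lies in a fixed finite set (the torsion contribution) and the coordinates $a_{i,j}\in\Z$ take all values independently. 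The crux is then the existence of a constant $A\ge1$ with
\[
A^{-1}\sum_{i,j}|a_{i,j}|^{1/(i+1)}\ \le\ |g|_S\ \le\ A\sum_{i,j}|a_{i,j}|^{1/(i+1)}+A
\]
for every $g\in G$; once this is available the theorem is a volume estimate.

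To prove the upper bound I would establish, by induction on the nilpotency class, the subclaim that $u_{i,j}^{\,m^{i+1}}=w\cdot z$ for some $z\in G^{(i+1)}$ and some word $w$ with $|w|_S=\cO(m)$. Indeed $u_{i,j}\in G^{(i)}$ is, modulo $G^{(i+1)}$, a product of boundedly many left-normed weight-$(i+1)$ commutators in the $u_{0,\bullet}$, and $[v_1^m,\dots,v_{i+1}^m]\equiv[v_1,\dots,v_{i+1}]^{m^{i+1}}\pmod{G^{(i+1)}}$ while being a word of length $\cO(m)$. Assembling these contributions layer by layer, and pushing each (polynomially bounded) error term and power-discrepancy down into the next layer --- where its extra cost is again $\cO(n)$, since $(\cO(n^{i+1}))^{1/(i'+1)}=\cO(n)$ for $i'\ge i$ --- yields a word for $g$ of length $\cO\big(\sum_{i,j}|a_{i,j}|^{1/(i+1)}\big)$. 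For the lower bound I would run the commutator collection process (Hall--Petresco) on an arbitrary length-$n$ word representing $g$: collecting it into the normal form above writes each $a_{i,j}$ as an integer-valued polynomial of degree at most $i+1$ in the letter-counts of the word, so $|a_{i,j}|=\cO(n^{i+1})$, i.e.\ $|a_{i,j}|^{1/(i+1)}=\cO(n)$, and summing over the boundedly many coordinates gives the claim.

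Granting the comparison, and up to the bounded factor coming from $\tau$, $\beta_G(n)$ is sandwiched between the number of integer tuples $(a_{i,j})$ with $\sum_{i,j}|a_{i,j}|^{1/(i+1)}\le A^{-1}(n-A)$ and the number with $\sum_{i,j}|a_{i,j}|^{1/(i+1)}\le An$. In either case the substitution $a_{i,j}=n^{i+1}b_{i,j}$ identifies the relevant region with a dilation, by the factors $n^{i+1}$, of one fixed bounded region star-shaped about the origin; its Euclidean volume is therefore $n^{\sum_i (i+1)r_i}$ times a constant, that is, $n^d$ times a constant, and since the region is star-shaped with inradius tending to infinity its lattice-point count is comparable to its volume, so $\beta_G(n)\sim n^d$. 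I expect the main obstacle to be the lower bound in the comparison: one must know that collection produces exponent polynomials whose degree is exactly governed by commutator weight, with constants uniform over all words of a given length, and here the several indexing conventions for the lower central series have to be tracked carefully (the error-control in the upper-bound induction is a milder form of the same difficulty). A less elementary alternative bypasses this by realising $G$ as a lattice in its Mal'cev Lie group $N$, comparing $|\cdot|_S$ with a left-invariant Carnot--Carath\'eodory metric on $N$, and reading off the volume growth of $N$ from the graded dilations of the associated graded Lie algebra, whose Jacobian is precisely $t^d$; but the combinatorial route stays inside $G$ and is closer in spirit to the present paper.
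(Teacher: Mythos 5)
The paper does not prove this statement at all: Theorem \ref{thm:BG} is quoted as the classical Bass--Guivarc'h theorem with a citation to \cite{Bass}, so there is no internal proof to compare against. Your sketch is essentially the standard Bass-style combinatorial argument (compare the word metric with the weighted gauge $\sum_{i,j}|a_{i,j}|^{1/(i+1)}$ on Mal'cev-type coordinates, then count lattice points in the dilated region), and in outline it is sound; the counting step can even be simplified by sandwiching the region between the boxes $\{|a_{i,j}|\le (n/D)^{i+1}\}$ and $\{|a_{i,j}|\le n^{i+1}\}$, both of cardinality comparable to $n^d$, avoiding any star-shapedness discussion. Two points in your write-up are looser than they should be. First, the normal form: for a general finitely generated nilpotent group you cannot take all coordinates $a_{i,j}$ ranging freely over $\Z$ together with a single terminal torsion factor $\tau(g)$ drawn from a fixed finite set, because torsion in a layer $G^{(i)}/G^{(i+1)}$ need not come from torsion elements of $G$ (its lifts can have infinite order, and their powers spill into deeper layers); the correct device is a polycyclic (Mal'cev) basis refining the lower central series, in which the coordinates attached to finite cyclic factors range over finite intervals interleaved with the infinite ones --- this changes the count only by a bounded factor and leaves $d=\sum_i(i+1)r_i$ intact. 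Second, in the lower bound the collected exponents $a_{i,j}$ are \emph{not} polynomials in the letter counts of the word alone (they depend on the order of the letters); what the Hall--Petresco collection argument actually gives, and all you need, is the uniform estimate $|a_{i,j}|=\cO(n^{i+1})$ for words of length $n$, which also supplies the bound on the error terms you push into deeper layers in the upper-bound induction. With those repairs your route is the standard one; the Carnot--Carath\'eodory/Pansu alternative you mention is equally legitimate but, as you say, external to the group-theoretic setting used in this paper.
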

Gromov \cite{Gromov} famously proved the converse; that any group of polynomial (standard) growth is virtually nilpotent.

The asymptotic behaviour of conjugacy growth in virtually \emph{abelian} groups is well-understood, as we see in the following Proposition.
\begin{proposition}\label{prop:vabQI}
	The (cumulative) standard and conjugacy growth functions of a virtually abelian group $G$ are equivalent.
\end{proposition}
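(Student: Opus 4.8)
The plan is to establish the two inequalities $c_G \preccurlyeq \beta_G$ and $\beta_G \preccurlyeq c_G$ separately. The first holds for any finitely generated group and any generating set $S$: each conjugacy class meeting the ball of radius $n$ contributes at least one (distinct) element to that ball, so $c_{G,S}(n) \le \beta_{G,S}(n)$, whence $c_G \preccurlyeq \beta_G$. The content of the proposition is the reverse inequality, and for this I would exhibit a large family of conjugacy classes that are forced to be short because they lie inside an abelian subgroup.

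First I would pass to a convenient subgroup. Since $G$ is virtually abelian it has a finite-index abelian subgroup, and replacing this by its normal core yields a subgroup $A \trianglelefteq G$ that is abelian, normal, and of finite index $m=[G:A]$. Fix a finite generating set $S$ for $G$ (by Proposition~\ref{prop:gensetequiv} the choice does not affect the equivalence classes of the growth functions). The key observation is that conjugation, restricted to $A$, factors through the finite group $G/A$: since $A$ is normal we have $[a]\subseteq A$ for all $a\in A$, and since $A$ is abelian, $gag^{-1}$ depends only on the coset $gA$. Hence $\#[a]\le m$ for every $a\in A$.

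Now I would count. Among the $\beta_{A,S}(n)$ elements $a\in A$ with $|a|_S\le n$ (here $\beta_{A,S}$ denotes the relative growth of $A$ in $G$), the relation of $G$-conjugacy has classes of size at most $m$, so these elements fall into at least $\beta_{A,S}(n)/m$ distinct $G$-conjugacy classes, each of length at most $n$. Therefore $c_{G,S}(n)\ge \beta_{A,S}(n)/m$. By Lemma~\ref{lem:relgrowth} (taking $t=1$) the relative growth $\beta_{A,S}$ is equivalent to $\beta_G$, and dividing by the constant $m$ does not change the equivalence class, so $\beta_G\preccurlyeq c_G$. Combining the two inequalities gives $c_G\sim\beta_G$.

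As for difficulty, there is essentially no obstacle beyond bookkeeping: the only genuine inputs are the elementary fact that elements of an abelian normal subgroup have uniformly bounded conjugacy classes, and the relative-growth Lemma~\ref{lem:relgrowth}. The single point requiring a little care is ensuring the abelian finite-index subgroup $A$ can be taken \emph{normal} (so that $[a]\subseteq A$), which is why one replaces an arbitrary finite-index abelian subgroup by its normal core.
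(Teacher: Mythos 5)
Your proof is correct, but the mechanism of the lower bound differs from the paper's. The paper's argument is a quick deduction from cited results: it takes any finite-index abelian subgroup $H\leq G$, notes $\beta_H\sim\beta_G\sim n^d$ (Bass--Guivarc'h plus quasi-isometry invariance of standard growth), and gets $c_G\succcurlyeq c_H=\beta_H$ from Lemma~\ref{lem:BC3.1(2)} together with the observation that conjugacy classes in an abelian group are singletons; normality of $H$ is never needed because the Breuillard--Cornulier lemma absorbs that work. You instead bypass Lemma~\ref{lem:BC3.1(2)} (and Bass--Guivarc'h) entirely: passing to the normal core gives a normal abelian $A\trianglelefteq G$ of index $m$, the $G$-conjugacy class of any $a\in A$ has size at most $m$ because conjugation on $A$ factors through $G/A$, and a direct count of elements of $A$ in the $n$-ball yields $c_{G}(n)\geq\beta_{A}(n)/m$ with $\beta_A\sim\beta_G$ by Lemma~\ref{lem:relgrowth}. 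Your route is more self-contained and elementary (its only external input is the relative-growth lemma), at the cost of the normal-core step, which you correctly identify as the one point needing care; the paper's route is shorter given the toolkit it has already assembled, and the BC lemma it leans on is reused elsewhere in the paper. Both establish $c_G\sim\beta_G$ correctly.
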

\begin{proof}
	Let \(H\) be an abelian subgroup of \(G\) of finite index. Since \(H\) is nilpotent, Theorem \ref{thm:BG} gives \(d\in\N\) with $\beta_H(n)\sim n^d$, and hence \(\beta_G(n)\sim n^d\) by Proposition \ref{prop:betaQI}. Thus $c_G(n)\preccurlyeq n^d$ (since conjugacy growth is clearly bounded above by standard growth). On the other hand, Lemma \ref{lem:BC3.1(2)} gives $c_G(n)\succcurlyeq c_H(n)\sim \beta_H(n)\sim n^d$ (since a conjugacy class in an abelian group is simply an element). Therefore $c_G(n)\sim n^d\sim\beta_G(n)$.
\end{proof}
As an immediate corollary, we see that conjugacy growth is a quasi-isometry invariant within the class of virtually abelian groups.

\subsection{Generating Functions}

We will also be interested in the formal power series associated to the standard and conjugacy growth functions. We write $\Q[[z]]$ for the ring of formal power series over a variable $z$ with coefficients in $\Q$, and $\Q[z]$ for the ring of polynomials over $z$ with rational coefficients.
\begin{definition}
	Let $G$ be a group with a finite generating set $S$. Then the \emph{standard growth series} is
	\[B_{G,S}(z) = \sum_{n=0}^{\infty} \beta_{G,S}(n)z^n \in\Q[[z]].\]
	Similarly, the \emph{conjugacy growth series} is
	\[C_{G,S}(z) = \sum_{n=0}^{\infty} c_{G,S}(n)z^n \in\Q[[z]].\]
	Here, \(z\) is a complex variable.
\end{definition}
When referring to growth functions and series, we will often suppress the subscripts when the groups and/or generating sets are clear from context.

\begin{definition}
	A series \(\Gamma(z)\in\Q[[z]]\) is said to be
	\begin{enumerate}
		\item \emph{rational} if it is an element of the field of fractions of $\Q[z]$, denoted $\Q(z)$ -- in other words, there are polynomials $p,q\in\Q[z]$ such that $\Gamma=\frac{p}{q}$;
		\item \emph{algebraic} if it is algebraic over $\Q[z]$ -- in other words, it is the root of some polynomial expression with coefficient from the ring of polynomials $\Q[z]$;
		\item \emph{transcendental} if it is not algebraic;
		\item \emph{holonomic} if it is the solution to a linear finite-order differential equation with coefficients from the ring of polynomials \(\Q[z]\) -- therefore the non-holonomic series form a proper subset of the transcendental series.
	\end{enumerate}
We will refer to this classification as the \emph{algebraic complexity} of $\Gamma(z)$.
\end{definition}

Generating functions are a well-studied topic. Duchin has written a very readable introduction \cite{DuchinSurvey} to generating functions and growth in groups. For a more rigorous treatment we use \cite{Concrete} or \cite{Stanley}. There are some slightly subtle connections between the algebraic complexity of a power series and the asymptotics of the coefficients. For example, we will use the following result of Stoll.
\begin{proposition}[Proposition 3.3 of \cite{Stoll}]\label{prop:leadingcoeff}
	Let $\Gamma(z)=\sum_{n\geq0}\gamma(n)z^n\in\Q[[z]]$, and suppose that $\lim_{n\to\infty}\frac{\gamma(n)}{n^d}=a$. Then if $a$ is an irrational (resp. transcendental) number then $\Gamma(z)$ is irrational (resp. transcendental) as a series.
\end{proposition}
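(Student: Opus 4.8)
The plan is to detect the hypothesised coefficient asymptotics through the boundary behaviour of $\Gamma$ at $z=1$, and then to read off a constraint on $a$ from the algebraic complexity of $\Gamma$; the auxiliary function $\Psi(z):=(1-z)^{d+1}\Gamma(z)$ is the object to track throughout. The first ingredient I would establish is the elementary Abelian estimate
\[\lim_{z\to1^-}(1-z)^{d+1}\Gamma(z)=a\cdot d!.\]
This follows from $(1-z)^{-(d+1)}=\sum_{n\geq0}\binom{n+d}{d}z^n$ with $\binom{n+d}{d}\sim n^d/d!$, which gives $(1-z)^{d+1}\sum_{n\geq0}n^d z^n\to d!$; writing $\gamma(n)=a n^d+b_n$ with $b_n=o(n^d)$ and bounding the tail of $\sum_n b_n z^n$ by $\varepsilon\sum_n n^d z^n$ shows the error term contributes $0$ in the limit. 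In particular the displayed limit exists, is finite, and — since an irrational or transcendental $a$ is in particular nonzero — is nonzero.

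For the rational case I would argue by contradiction. If $\Gamma\in\Q(z)$ then $\Psi\in\Q(z)$; write $\Psi=P/Q$ in lowest terms with $P,Q\in\Q[z]$ coprime. Since $\Psi$ has a finite limit at $z=1$ we must have $Q(1)\neq0$ (otherwise coprimality forces $P(1)\neq0$ and $\Psi$ has a pole at $1$), so $a\cdot d!=\lim_{z\to1^-}P(z)/Q(z)=P(1)/Q(1)\in\Q$, whence $a\in\Q$. This contradicts irrationality of $a$; contrapositively, if $a$ is irrational then $\Gamma$ cannot be rational.

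For the algebraic case I would proceed analogously. If $\Gamma$ is algebraic over $\Q(z)$ then so is $\Psi$, say $R(z,\Psi(z))=0$ for some nonzero $R\in\Q[z][y]$. By the Newton--Puiseux theorem, in a slit neighbourhood of $z=1$ the analytic branch $\Psi$ is represented by a convergent Puiseux series $\Psi(z)=\sum_{k\geq k_0}c_k(1-z)^{k/q}$ for some $q\in\N_+$, and because $R$ has coefficients in $\Q(z)$ every $c_k$ is an algebraic number (the Newton-polygon recursion producing the $c_k$ only ever solves polynomial equations over $\Q$ and its finite extensions). Evaluating along $z\in(1-\delta,1)$ with $(1-z)^{1/q}>0$, finiteness of $\lim_{z\to1^-}\Psi(z)$ forces the least exponent occurring with nonzero coefficient to be $\geq0$, and that limit then equals $0$ (if the exponent is positive) or $c_0$ (if it is $0$). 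Either way $a\cdot d!\in\overline{\Q}$, so $a\in\overline{\Q}$, contradicting transcendence of $a$; contrapositively, a transcendental $a$ forces $\Gamma$ to be transcendental as a series.

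The substantive point is the algebraic case: one must invoke a Puiseux expansion at a point on the circle of convergence and — crucially — keep track of the field of definition, so that the extracted coefficient $c_0$ is a genuine algebraic number rather than the mere formal output of an expansion procedure; here I would cite a standard treatment of the Newton--Puiseux theorem over a subfield. The remaining analytic bookkeeping (that the power-series branch of $\Gamma$ is the Puiseux branch relevant as $z\to1^-$, using that an algebraic function continues analytically across its circle of convergence away from its finitely many singular points) is routine, as are the Abelian estimate of the first step and the rational-function manipulation of the second.
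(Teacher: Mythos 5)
Your argument is correct. Note, though, that the paper itself offers no proof of this statement: it is imported verbatim as Proposition 3.3 of Stoll's paper, so there is no internal argument to compare against. What you have written is a sound, self-contained proof. The Abelian step $\lim_{z\to1^-}(1-z)^{d+1}\Gamma(z)=a\,d!$ is fine (using $\gamma(n)=O(n^d)$ to get radius of convergence at least $1$, and the $\varepsilon$-splitting of the $o(n^d)$ error), the rational case is immediate from $Q(1)\neq0$, and the algebraic case correctly isolates the one genuinely substantive point: that the Puiseux expansion of an algebraic function defined over $\Q(z)$ at $z=1$ has coefficients in $\overline{\Q}$, so the boundary limit $a\,d!$ is algebraic. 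Your identification of the analytic function $\Psi$ on $(1-\delta,1)$ with a single Puiseux branch is indeed routine (finitely many branches, an uncountable agreement set, and the identity theorem), and your implicit passage from formal rationality/algebraicity in $\Q[[z]]$ to the corresponding analytic identities on the unit disc is harmless since $\Gamma$ converges there. This is essentially the same mechanism as the standard (and Stoll's) treatment, which reads off coefficient asymptotics of algebraic functions from Puiseux expansions at dominant singularities; your variant replaces coefficient-transfer theorems by the elementary Abelian limit at $z\to1^-$, which is arguably cleaner for this particular statement. Only cosmetic caveats: $d$ should be read as a non-negative integer (as it is in the paper's applications), and the remark that $a\neq0$ is not actually needed, since the contrapositive you prove is simply that rational (resp.\ algebraic) $\Gamma$ forces $a\in\Q$ (resp.\ $a\in\overline{\Q}$).
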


The next result relates to generating functions whose coefficients are in the polynomial range. 
\begin{theorem}\label{thm:nonholo}
	Let \(\gamma\colon\N\to\N\) be strictly between polynomials, that is \(n^d\prec\gamma(n)\prec n^{d+1}\) for some \(d\in\N\). Then the series \(\sum_{n\geq0}\gamma(n)z^n\) is not holonomic.
\end{theorem}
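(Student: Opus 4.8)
The plan is to argue by contradiction using the structure theory of holonomic (D-finite) power series. Suppose $\Gamma(z) = \sum_{n\geq 0}\gamma(n)z^n$ is holonomic. A classical fact (see \cite{Stanley}, or the theory of P-recursive sequences) is that a power series is holonomic if and only if its coefficient sequence $\gamma(n)$ is \emph{P-recursive}: there exist polynomials $p_0,\ldots,p_k \in \Q[n]$, with $p_k \not\equiv 0$, and an integer $N$ such that
\[ p_k(n)\gamma(n+k) + p_{k-1}(n)\gamma(n+k-1) + \cdots + p_0(n)\gamma(n) = 0 \qquad \text{for all } n \geq N. \]
So I would first invoke this equivalence, then extract asymptotic information about any P-recursive sequence whose growth is trapped strictly between $n^d$ and $n^{d+1}$, and derive a contradiction.

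The key analytic input is the Birkhoff--Trjitzinsky theory (or, in a more elementary packaging, the results of Wimp and Zeilberger \cite{WimpZeilberger}, or Theorem D.3.4 in \cite{Stanley}) describing the possible asymptotic behaviour of solutions to a linear recurrence with polynomial coefficients. Such a solution, if it does not decay super-polynomially, has an asymptotic expansion of the form
\[ \gamma(n) \sim C\, n^{\alpha} \rho^{n} (\log n)^{m} \bigl(1 + o(1)\bigr) \]
for suitable constants: $\rho > 0$ an algebraic number, $\alpha$ a rational number, $m \in \N$, and $C \neq 0$ (more precisely one may need a finite sum of such terms, and $\rho$ could be complex with modulus giving oscillation, but since $\gamma(n)$ is a sequence of nonnegative integers with polynomial growth the dominant term must be real with $\rho = 1$). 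The hypothesis $n^d \prec \gamma(n) \prec n^{d+1}$ forces $\rho = 1$: if $\rho > 1$ the growth would be exponential, contradicting $\gamma(n) \preccurlyeq n^{d+1}$, and if $\rho < 1$ the sequence would tend to $0$, impossible for a sequence of positive integers that is $\succcurlyeq n^d$ with $d \geq 0$. Hence $\gamma(n) \sim C n^{\alpha}(\log n)^m$ with $\alpha \in \Q$.

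Now I would pin down $\alpha$ and $m$. Since $\gamma(n) \succcurlyeq n^d$ we get $\alpha \geq d$ (allowing $\alpha = d$ only if $m \geq 0$, which is automatic), and since $\gamma(n) \preccurlyeq n^{d+1}$ we get $\alpha \leq d+1$, with $\alpha = d+1$ only if $m = 0$. Consider the cases. If $d < \alpha < d+1$, then $\alpha$ is a non-integer rational, say $\alpha = d + p/q$ in lowest terms with $q \geq 2$; but then $\gamma(n)/n^{d} \sim C n^{p/q}(\log n)^m \to \infty$ while $\gamma(n)/n^{d+1} \to 0$, which is consistent, so this case is \emph{not} immediately excluded by the trapping alone --- and this is where the argument needs the extra ingredient that $\gamma(n)$ takes \emph{integer} values. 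Here one uses a result on integer-valued P-recursive sequences (e.g. the rationality of $\alpha$ is standard, but integrality of $\gamma(n)$ combined with the asymptotic $C n^{\alpha}(\log n)^m$ forces, via a theorem of Fatou-type or via considering $\gamma(n)/\gamma(n-1) \to 1$ and telescoping, that $\alpha$ must be a non-negative integer) --- so $\alpha \in \{d, d+1\}$. If $\alpha = d$, then $\gamma(n) \sim C n^d (\log n)^m$; if $m = 0$ this gives $\gamma(n) \sim C n^d$, so $\gamma(n) \preccurlyeq n^d$, contradicting $n^d \prec \gamma(n)$; if $m \geq 1$ then $\gamma(n) \succcurlyeq n^d \log n \succ n^d$, fine, but we must rule this out another way --- actually $n^d(\log n)^m \prec n^{d+1}$ holds, so this case survives the trapping too. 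Similarly $\alpha = d+1, m=0$ gives $\gamma(n) \sim Cn^{d+1} \succcurlyeq n^{d+1}$, contradicting $\gamma(n) \prec n^{d+1}$.

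So the main obstacle, and the part requiring care, is excluding the logarithmic case $\gamma(n) \sim C n^d (\log n)^m$ with $m \geq 1$: this genuinely lies strictly between $n^d$ and $n^{d+1}$, so it is \emph{not} ruled out by the polynomial-trapping hypothesis, and indeed such sequences \emph{can} be P-recursive (e.g. harmonic-number-like sequences). I believe the resolution is that an \emph{integer}-valued sequence cannot have asymptotics $C n^d(\log n)^m$ with $m \geq 1$ and $d \geq 1$: the standard tool is that for a P-recursive integer sequence, the generating function's only singularity on its circle of convergence (which is $z=1$ here, radius $1$) governs the asymptotics, and a $\log$ factor in the coefficient asymptotics corresponds to a logarithmic singularity of $\Gamma(z)$ at $z=1$; but then $\Gamma(z)$ would not be algebraic, which is consistent with holonomicity --- so this does not work directly. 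The honest path is: first reduce to $\rho = 1$ and $\alpha \in \Q$ by Birkhoff--Trjitzinsky as above; then note $\alpha \in \{d, d+1\}$ is forced not by integrality but by the two-sided polynomial bound \emph{once we also know} $m$ is controlled; and finally handle the case $\alpha = d$, $m \geq 1$ separately by observing that the leading term $C n^d (\log n)^m$ would require (differentiating the recurrence / examining indicial equations at $z=1$) that $d$ of the ``slots'' are used up by the polynomial part with an additional confluence producing the log, which forces the recurrence order $k$ and the polynomial degrees to interact in a way that, combined with $\gamma(n) \preccurlyeq n^{d+1}$ strictly, still permits it. I would therefore phrase the final step as: by \cite{Stanley} Theorem D.3.4, $\gamma(n)$ has an asymptotic expansion $\gamma(n) = n^{\alpha}(\log n)^{m}(c + o(1))$ with $\alpha \in \Q$, $m \in \N$, $c \neq 0$ (using $\rho = 1$ as justified); the condition $n^d \prec \gamma(n)$ forces either $\alpha > d$, or $\alpha = d$ and $m \geq 1$; the condition $\gamma(n) \prec n^{d+1}$ forces either $\alpha < d+1$, or $\alpha = d+1$ and $m = 0$; so either way $\gamma(n)/n^{d+j} \to 0$ or $\to\infty$ appropriately --- the contradiction I actually want is with the stronger structural fact that for an integer-valued P-recursive sequence the exponent $\alpha$ and the presence of logs are compatible only in the forms $c n^{k}$, $c n^{k}\log n$, etc. with \emph{integer} $k$, and that these are all either $\preccurlyeq n^d$ or $\succcurlyeq n^{d+1}$ --- which is false for $n^d \log n$. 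I suspect the paper's actual argument sidesteps this by a cleaner route, perhaps using that $\sum \gamma(n) z^n$ holonomic implies, by a theorem on the set of ``holonomic asymptotics'', that $\gamma(n)$ is asymptotic to a $\Q$-linear combination of terms $n^{a}(\log n)^{b}$ with $a \in \Q_{\geq 0}$, and then a direct integrality/monotonicity argument (the sequence of ratios $\gamma(n+1)/\gamma(n)$ tending to $1$ together with $\gamma$ integer-valued) excludes everything except $a$ a non-negative integer and $b = 0$, i.e. $\gamma(n) \sim c n^{a}$ for an integer $a$, which contradicts $n^d \prec \gamma(n) \prec n^{d+1}$ directly. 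That integrality-plus-monotonicity step is the crux, and I would expect it to be the heart of the proof.
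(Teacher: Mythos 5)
There is a genuine gap, and you identified it yourself without closing it: your route via P-recursiveness and Birkhoff--Trjitzinsky asymptotics cannot exclude the case $\gamma(n)\sim c\,n^{d}(\log n)^{m}$ with $m\geq 1$ (nor, really, non-integer rational exponents $\alpha$), and that case is precisely the content of the theorem --- indeed the paper applies the theorem to conjugacy growth functions equivalent to $n^{d}\log n$. Your proposed crux, that integrality of the values $\gamma(n)$ together with $\gamma(n+1)/\gamma(n)\to 1$ forces $\gamma(n)\sim c\,n^{a}$ with $a$ an integer and no logarithm, is not a valid argument: integrality of a sequence imposes no constraint of this kind on its asymptotics (e.g.\ $\lfloor n^{2}\log n\rfloor$ is a non-decreasing integer sequence with exactly the forbidden asymptotics, and its ratios tend to $1$). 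What is true, and what must be proved, is that such a sequence cannot be \emph{holonomic}; asymptotic classification of P-recursive sequences plus ``integer values'' does not deliver that, so your argument never reaches a contradiction in the surviving cases.

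The missing idea is to use integrality of the \emph{coefficients} analytically, via the P\'olya--Carlson theorem, rather than trying to constrain the asymptotic shape of the sequence. The paper's proof runs: since $\gamma$ is polynomially bounded, the series converges on the open unit disc; P\'olya--Carlson (Lemma \ref{lem:PC}) then says the series is either rational or has the unit circle as a natural boundary; a holonomic function has only finitely many singularities, so it cannot have a natural boundary, and holonomicity would force rationality. Finally, rationality of a series with non-decreasing coefficients in the polynomial range forces $\gamma$ to be eventually quasi-polynomial and hence $\gamma(n)\sim n^{e}$ for some integer $e$ (Proposition \ref{prop:EQP} and Corollary \ref{cor:rationalpolynomial}), contradicting $n^{d}\prec\gamma(n)\prec n^{d+1}$. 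This is both cleaner and strictly stronger than what asymptotics of P-recursive sequences can give you here; if you want to salvage your approach you would have to import a theorem that already encodes P\'olya--Carlson-type rigidity for integer holonomic series, at which point you are reproducing the paper's argument.
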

To prove this we will need another definition.
\begin{definition}
	A function $f:\N\to\C$ is called \emph{eventually quasi-polynomial} if there exist some positive integer period $N$, threshold $T\geq0$, and polynomials $f_0,f_1,\ldots,f_{N-1}$ so that for all $n\geq T$, $f(n)=f_i(n)$ whenever $n\equiv i\mod N$.
\end{definition}
The following is an immediate consequence of Proposition 4.4.1 of \cite{Stanley}.
\begin{proposition}\label{prop:EQP}
	Let \(\gamma\colon\N\to\N\) be in the polynomial range, i.e. $\gamma(n)\leq Cn^d$ for some $C>1$, $d\in\N$. Then $\sum_{n\geq0} \gamma(n)z^n$ is rational if and only if $\gamma(n)$ is eventually quasi-polynomial.
\end{proposition}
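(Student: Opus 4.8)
The plan is to prove the two implications separately, treating the backward direction as an essentially immediate application of Stanley's Proposition 4.4.1 and concentrating the real work on the forward direction, where the polynomial bound is the crucial extra hypothesis.

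For the ($\Leftarrow$) direction, suppose $\gamma$ agrees with a genuine quasi-polynomial $\tilde\gamma$ for all $n\geq T$. I would split $\sum_{n\geq0}\gamma(n)z^n=\sum_{n<T}\bigl(\gamma(n)-\tilde\gamma(n)\bigr)z^n+\sum_{n\geq0}\tilde\gamma(n)z^n$. The first summand is a polynomial, and by Proposition 4.4.1 the generating function of any quasi-polynomial is rational (its denominator divides $(1-z^N)^k$ for suitable $N,k$), so the total is rational.

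For the ($\Rightarrow$) direction, write $\sum_{n\geq0}\gamma(n)z^n=P(z)/Q(z)$ in lowest terms. First I would invoke Fatou's lemma on power series: since the coefficients $\gamma(n)$ are non-negative integers and the series is rational, one may take $P,Q\in\Z[z]$ with $Q(0)=1$. Consequently the reciprocals $\alpha_i$ of the roots of $Q$ are roots of the reciprocal polynomial $z^{\deg Q}Q(1/z)$, which is monic with integer coefficients, so the $\alpha_i$ are nonzero algebraic integers, and a partial-fraction expansion gives $\gamma(n)=\sum_i R_i(n)\alpha_i^n$ for all large $n$, with $R_i\in\C[z]$. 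Next I would use the polynomial bound: because $\gamma(n)\leq Cn^d$, the radius of convergence of the series is at least $1$, so the pole of $P/Q$ nearest the origin has modulus at least $1$, i.e.\ $\max_i|\alpha_i|\leq1$. Since $Q\in\Z[z]$ and $P/Q$ is reduced, the minimal polynomial of each $\alpha_i$ divides the reciprocal polynomial of $Q$, so every Galois conjugate of $\alpha_i$ is again one of the $\alpha_j$ and hence also has modulus at most $1$.

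The crux, and the step I expect to be the main obstacle, is converting this modulus information into the statement that every $\alpha_i$ is a root of unity. Here I would apply Kronecker's theorem: a nonzero algebraic integer all of whose conjugates lie in the closed unit disk is a root of unity. Each $\alpha_i$ qualifies, so there is a common $N$ with $\alpha_i^N=1$, and then $\gamma(n)=\sum_i R_i(n)\alpha_i^n$ for large $n$ is exactly a quasi-polynomial of period $N$, i.e.\ $\gamma$ is \emph{eventually quasi-polynomial}. The point I would be most careful about is that the polynomial bound must control \emph{all} the relevant $\alpha_i$, not merely those of maximal modulus, and that no genuine pole is accidentally cancelled; this is precisely why reducing $P/Q$ to lowest terms and identifying the radius of convergence with the modulus of the nearest pole is essential, and why the integrality hypothesis (through Fatou and then Kronecker) cannot be dispensed with, a rational series such as $1/((1-z)(1-z/2))$ being polynomially bounded yet not quasi-polynomial once integrality fails.
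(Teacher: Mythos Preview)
Your argument is correct. The paper does not give its own proof here; it simply records the proposition as ``an immediate consequence of Proposition~4.4.1 of \cite{Stanley}''. Stanley's 4.4.1 is the characterisation: $f$ is a quasi-polynomial of quasi-period $N$ if and only if $\sum f(n)z^n$ is rational with every pole an $N$th root of unity (and $\deg P<\deg Q$ after reduction). That result handles your $(\Leftarrow)$ direction outright, and reduces $(\Rightarrow)$ to showing that a rational $\sum\gamma(n)z^n$ with $\gamma(n)\leq Cn^d$ and $\gamma(n)\in\Z$ must have all poles at roots of unity. This bridging step is exactly what you supply via Fatou's lemma (to force $P,Q\in\Z[z]$ with $Q(0)=\pm1$, so the reciprocal roots $\alpha_i$ are algebraic integers) together with the radius-of-convergence bound $|\alpha_i|\leq1$ and Kronecker's theorem. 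Your closing example $1/((1-z)(1-z/2))$, with bounded but non-integral coefficients, correctly isolates why integrality is indispensable.

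So your proof is not a different route so much as a fleshed-out version of the citation: the paper leaves the Fatou--Kronecker step implicit, and you have made it explicit and checked the details carefully (lowest terms so that every root of $Q$ is a genuine pole; conjugates of each $\alpha_i$ stay among the $\alpha_j$; polynomial remainder when $\deg P\geq\deg Q$ affects only finitely many terms, matching ``eventually''). Nothing is missing.
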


\begin{corollary}\label{cor:rationalpolynomial}
	Suppose $\gamma:\N\to\N$ is non-decreasing and in the polynomial range as above. If $\sum_{n\geq0} \gamma(n)z^n$ is rational then $\gamma\sim n^d$ for some $d\in\N$.
\end{corollary}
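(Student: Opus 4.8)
The plan is to feed the rationality hypothesis into Proposition~\ref{prop:EQP} and then squeeze out the conclusion using monotonicity. First I would invoke Proposition~\ref{prop:EQP}: since $\gamma$ is non-decreasing and in the polynomial range, rationality of $\sum_{n\ge0}\gamma(n)z^n$ forces $\gamma$ to be eventually quasi-polynomial, so there are a period $N\in\N_+$, a threshold $T\ge0$, and polynomials $f_0,\dots,f_{N-1}$ with $\gamma(n)=f_i(n)$ whenever $n\equiv i\bmod N$ and $n\ge T$. Because $f_i(n)=\gamma(n)$ satisfies $0\le f_i(n)\le Cn^d$ along the $i$-th progression, each $f_i$ has degree at most $d$ and non-negative leading coefficient. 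If $\gamma$ happens to be bounded then, being a non-decreasing integer sequence, it is eventually constant and $\gamma\sim 1=n^0$, so I may assume $\gamma(n)\to\infty$; then no $f_i$ can be constant (a value attained infinitely often along a progression would bound $\gamma$ from above), so every $f_i$ is non-constant with strictly positive leading coefficient.

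The core of the argument will be to show that all the $f_i$ have the same degree and the same leading coefficient. To this end I would fix residues $0\le i<j\le N-1$, take a large $m\equiv i\bmod N$, and apply monotonicity along the chain $m\le m+(j-i)\le m+N$. Since $m+(j-i)\equiv j$ and $m+N\equiv i$, this reads
\[f_i(m)\ \le\ f_j\bigl(m+(j-i)\bigr)\ \le\ f_i(m+N)\]
for all sufficiently large such $m$. Comparing leading terms as $m\to\infty$, the left inequality gives $\deg f_i\le\deg f_j$ and the right one gives $\deg f_j\le\deg f_i$, so all the $f_i$ share a common degree $e$ (with $1\le e\le d$ under our standing assumption); the same two inequalities, now read at the level of leading coefficients, give $a_i\le a_j$ and $a_j\le a_i$, so the common leading coefficient $a>0$ is the same for every residue. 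Then for every large $n$ one has $\gamma(n)=f_i(n)=an^e+(\text{lower-order terms})$ irrespective of the residue of $n$, whence $\gamma(n)/n^e\to a>0$ and therefore $\gamma\sim n^e$, proving the claim with $d=e$.

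I expect the only real obstacle to be the bookkeeping: setting up the case analysis cleanly so that no exceptional residue class behaves differently (in particular, confirming that constant pieces are excluded once $\gamma$ is unbounded), and justifying the passage from the displayed chain of inequalities to statements about degrees and then leading coefficients. Both points are elementary, and everything else reduces to standard facts about quasi-polynomials together with the already-cited Proposition~\ref{prop:EQP}.
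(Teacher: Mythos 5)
Your proposal is correct and follows essentially the same route as the paper: invoke Proposition \ref{prop:EQP} and use monotonicity across the cycling residue classes to force all the constituent polynomials to have the same degree $e$, whence $\gamma\sim n^e$. The additional work equalizing the leading coefficients (and the separate treatment of the bounded case) is harmless but unnecessary, since the equivalence $\sim$ already absorbs multiplicative constants, so equality of degrees alone suffices.
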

\begin{proof}
	Proposition \ref{prop:EQP} implies that $\gamma$ is eventually quasi-polynomial, say with polynomials $\gamma_0,\ldots,\gamma_N$ as in the definition. The degree of each $\gamma_i$ is at least the degree of $\gamma_{i-1}$ (since large enough $n$ would otherwise violate the non-decreasing assumption). But since these polynomials cycle, the degree of $\gamma_0$ must be at least the degree of $\gamma_N$. So they all have the same degree, say $d\in\N$. Thus $\gamma$ cycles between finitely many polynomials, all equivalent to $n^d$, and so $\gamma(n)\sim n^d$.
\end{proof}
We also need the following two results (see \cite{FlajoletHolonomic}).
\begin{lemma}[P\'olya-Carlson]\label{lem:PC}
	If \(\Gamma(z)\) is a power series with integer coefficients that converges on the open unit disc, then \(\Gamma(z)\) is either rational or admits the unit circle as a natural boundary.
\end{lemma}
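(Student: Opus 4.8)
The plan is to establish the dichotomy in its contrapositive form: assuming $\Gamma(z)=\sum_{n\ge 0}a_n z^n$ with $a_n\in\Z$ converges on the open unit disc but does \emph{not} admit the unit circle as a natural boundary, I would show that $\Gamma$ is rational (the reverse implication, that a rational function continues across the circle away from its finitely many poles, is immediate). Under this assumption $\Gamma$ extends holomorphically across some nonempty open arc $A$ of the circle, so its singularities on the circle form a compact set $K$ contained in the proper closed arc complementary to $A$. The algebraic engine is Kronecker's criterion: $\Gamma$ represents a rational function precisely when the Hankel determinants
\[
D_n=\det\bigl(a_{i+j}\bigr)_{0\le i,j\le n}
\]
vanish for all large $n$. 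Because the $a_n$ are integers, each $D_n$ is an integer, so it suffices to prove $D_n\to 0$: then $|D_n|<1$ eventually, forcing $D_n=0$ and hence rationality.

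To show $D_n\to 0$ I would pass to the function at infinity $\phi(w)=\Gamma(1/w)$, holomorphic on the complement (in the Riemann sphere) of $K$ with expansion $\phi(w)=\sum_{n\ge 0}a_n w^{-n}$ near $w=\infty$. For a rectifiable contour $\gamma$ encircling $K$, Cauchy's formula expresses each coefficient as a moment $a_n=\frac{1}{2\pi i}\oint_\gamma \zeta^{\,n-1}\phi(\zeta)\,d\zeta$, turning $D_n$ into a determinant of integrals. The Andr\'eief (Heine) identity then collapses this determinant to a single symmetric integral whose kernel is the squared Vandermonde, yielding a bound of the shape
\[
|D_n|\le \frac{C^{\,n+1}}{(n+1)!}\max_{\zeta_0,\dots,\zeta_n\in\gamma}\prod_{0\le i<j\le n}|\zeta_i-\zeta_j|^2,
\]
where $C$ controls the total variation of the relevant measure on the fixed contour $\gamma$.

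The maximal squared Vandermonde product is exactly the quantity whose $n(n+1)$-th root tends to the transfinite diameter, equivalently the logarithmic capacity, of $\gamma$. The decisive geometric fact is that a proper closed sub-arc of the unit circle has capacity strictly below $1$: a circular arc of angular length $2\alpha<2\pi$ has capacity $\sin(\alpha/2)<1$, and capacity is monotone, so one may choose $\gamma$ hugging $K$ closely enough that $\operatorname{cap}(\gamma)=\rho<1$. Since $\log\bigl((n+1)!\bigr)$ and $(n+1)\log C$ are both $o(n^2)$, the displayed bound gives $\limsup_n |D_n|^{1/n^2}\le \rho<1$, whence $D_n\to 0$. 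With the integrality of the $D_n$ this forces $D_n=0$ for all large $n$, and Kronecker's criterion delivers the rationality of $\Gamma$.

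The main obstacle is the bridge from the Hankel determinant to the capacity estimate. Installing the moment representation requires using the analytic continuation across $A$ to realise $\phi$ as a Cauchy transform and justifying the deformation of $\gamma$ onto $K$, while identifying the limiting Vandermonde maximum with the transfinite diameter is precisely the potential-theoretic content (Fekete points and the coincidence of transfinite diameter with logarithmic capacity). A smaller but genuine point is the \emph{strict} inequality $\operatorname{cap}(K)<1$: monotonicity of capacity alone only yields $\le 1$, so one really needs the explicit value for a circular arc (or a strict-monotonicity argument) to exclude the limiting case where $K$ fills the whole circle. By contrast, Kronecker's determinant criterion and the integrality of the $D_n$ are elementary and carry no analytic difficulty.
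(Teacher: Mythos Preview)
The paper does not prove this lemma at all: it is quoted as a classical result with a reference to \cite{FlajoletHolonomic}, and is then used as a black box in the proof of Theorem~\ref{thm:nonholo}. So there is no ``paper's own proof'' to compare against.

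Your sketch is the standard P\'olya argument, and the overall architecture is sound: Kronecker's criterion reduces rationality to the vanishing of the Hankel determinants $D_n$; integrality of the coefficients makes $D_n\in\Z$, so $D_n\to 0$ suffices; the Andr\'eief identity converts $D_n$ into a multiple contour integral controlled by a squared Vandermonde; and the asymptotics of the maximal Vandermonde are governed by the transfinite diameter (equivalently, logarithmic capacity) of the contour. The strict inequality $\operatorname{cap}(\text{proper sub-arc})=\sin(\alpha/2)<1$ is exactly the geometric input that drives $|D_n|^{1/n^2}$ below $1$.

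One point deserves tightening. You assert that $\phi(w)=\Gamma(1/w)$ is holomorphic on the complement of $K$ in the sphere; this is an overclaim. From the hypothesis you only know that $\Gamma$ is analytic on the open disc together with a two-sided neighbourhood of the arc $A$, so $\phi$ is analytic on $\{|w|>1\}$ together with a neighbourhood of $1/A$ --- not on the interior of the circle near $1/K$. Consequently you cannot literally shrink $\gamma$ to a small loop encircling $K$ alone. What the classical proof does instead is deform the original contour $|w|=R>1$ so that it hugs $1/K$ from the outside and dips through the analytically continued region near $1/A$ toward the origin; the resulting contour lies in the domain of $\phi$, and its transfinite diameter can still be made strictly less than $1$ because the portion near the origin contributes negligibly to the Vandermonde asymptotics while the portion near $1/K$ is controlled by the arc capacity. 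You essentially flag this yourself as ``the main obstacle'' (justifying the deformation of $\gamma$), so the plan is honest about where the analytic work lies; just be aware that the domain of $\phi$ is smaller than you stated, and the contour choice must respect that.
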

\begin{lemma}
	Holonomic functions necessarily have only finitely many singularities.
\end{lemma}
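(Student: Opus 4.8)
The plan is to read the singularities directly off the defining differential equation. By definition, a holonomic series $\Gamma(z)$ satisfies a relation
\[p_k(z)\,\Gamma^{(k)}(z)+p_{k-1}(z)\,\Gamma^{(k-1)}(z)+\cdots+p_0(z)\,\Gamma(z)=0\]
with $p_0,\dots,p_k\in\Q[z]$ and $p_k\not\equiv 0$. Dividing through by the leading coefficient, I would rewrite this as a monic linear equation
\[\Gamma^{(k)}(z)+q_{k-1}(z)\,\Gamma^{(k-1)}(z)+\cdots+q_0(z)\,\Gamma(z)=0,\qquad q_i=p_i/p_k,\]
whose coefficients $q_0,\dots,q_{k-1}$ are rational functions, hence holomorphic on $\C\setminus Z$, where $Z=\{z\in\C : p_k(z)=0\}$ is the zero set of the leading polynomial. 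The whole point is that $Z$ is finite, since $p_k$ is a nonzero polynomial.

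The key analytic input is the classical existence and uniqueness theorem for linear ordinary differential equations with holomorphic coefficients: at any ordinary point $z_0\notin Z$, the coefficients $q_i$ are holomorphic on the largest disc about $z_0$ missing $Z$, and so for each choice of initial data $\bigl(\Gamma(z_0),\Gamma'(z_0),\dots,\Gamma^{(k-1)}(z_0)\bigr)$ there is a unique holomorphic solution on that disc. Applying this repeatedly along any path in the connected open set $\C\setminus Z$, I would conclude that the analytic germ determined by $\Gamma$ continues holomorphically along every such path. Consequently every singularity of $\Gamma$---whether a pole, branch point, or essential singularity---must lie in $Z$, and so $\Gamma$ has only finitely many singularities.

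The substantive ingredient is the existence theorem at ordinary points, which is what forces analyticity off the zero set of $p_k$; everything else is bookkeeping. The one point needing care is that a holonomic function may become multivalued under continuation, so ``singularity'' must be read in the sense of singularities of the maximal analytic continuation (the branch points of its Riemann surface) rather than of a single fixed branch. The argument above bounds these by $Z$ in any case, precisely because continuation is unobstructed along every path avoiding the finite set $Z$.
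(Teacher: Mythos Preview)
Your argument is correct and is the standard one: the zeros of the leading coefficient \(p_k\) form a finite set \(Z\), and the existence--uniqueness theorem for linear ODEs with holomorphic coefficients guarantees analytic continuation of any local solution along every path in \(\C\setminus Z\), so all singularities of the maximal analytic continuation lie in \(Z\). Your caveat about multivaluedness is well placed.

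For comparison, the paper does not actually give a proof of this lemma at all; it simply states it with a reference to \cite{FlajoletHolonomic} and then uses it as a black box in the proof of Theorem~\ref{thm:nonholo}. So there is no paper-proof to compare against --- you have supplied the argument that the paper outsources to the literature, and the argument you supply is exactly the one found in standard treatments of D-finite/holonomic functions.
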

\begin{proof}[Proof of Theorem \ref{thm:nonholo}.]
	By the Cauchy-Hadamard Theorem, the series \(\sum\gamma(n)z^n\) converges inside the unit disc, and so Lemma \ref{lem:PC} applies. Thus if the series were holonomic, and hence had only finitely many singularities, it would be rational. But by Corollary \ref{cor:rationalpolynomial}, this contradicts the hypothesis. Thus, the series cannot be holonomic. 
\end{proof}

We finish our discussion of generating functions by noting the following results from the literature.
\begin{theorem}[\cite{Benson}]\label{thm:Benson}
	Let \(G\) be a finitely-generated virtually abelian group. Then the standard growth series of \(G\) is rational, with respect to any choice of finite generating set.
\end{theorem}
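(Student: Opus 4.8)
The plan is to prove the statement the way Benson did, by reducing rationality of $B_{G,S}(z)$ to lattice-point counts in dilating rational polytopes. First I would fix a free abelian normal subgroup $N\cong\Z^n$ of finite index in $G$ (for any finitely generated virtually abelian group one has a finite-index subgroup isomorphic to $\Z^n$, and intersecting its finitely many $G$-conjugates produces a finite-index normal $N\cong\Z^n$), and set $Q=G/N$, a finite group with quotient map $\pi\colon G\to Q$. Since $B_{G,S}(z)=\sum_{gN\in G/N}\sum_{r\ge0}\beta_{gN}(r)z^r$ is a finite sum of the relative growth series of the cosets of $N$, it suffices to prove that each coset's relative growth series is rational.

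Next I would encode the word metric of $G$ on a fixed coset by a finite-state bookkeeping device. Choosing a transversal $\{t_q\}_{q\in Q}$, every $g\in G$ is written uniquely as $g=t_q\,\nu$ with $q=\pi(g)$ and $\nu\in N$, which after identifying $N$ with $\Z^n$ we treat as a lattice vector. Because $N$ is normal, right multiplication by a generator $s\in S\cup S^{-1}$ sends $t_q\nu$ to $t_{q\pi(s)}\,\bigl(c(q,s)+\phi_{\pi(s)}(\nu)\bigr)$, where $c(q,s)\in\Z^n$ is a bounded ``carry'' depending only on $q$ and $s$, and $\phi_{\pi(s)}\in\GL_n(\Z)$ is conjugation by $s$, lying in the finite image of the induced homomorphism $Q\to\GL_n(\Z)$. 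Thus $|g|_S$ is the length of a shortest path from $(1,0)$ to $(\pi(g),\nu)$ in a locally finite graph on $Q\times\Z^n$ whose edge set is ``almost'' translation invariant in the $\Z^n$-coordinate.

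The geometric heart of the argument is to show that, for a fixed coset $t_qN$, the function $v\mapsto|t_q v|_S$ is \emph{eventually quasi-linear}: there is a subdivision of $\R^n$ into finitely many rational polyhedral cones and, on each cone, a rational linear functional together with a bounded periodic correction term that computes $|t_q v|_S$ for all lattice $v$ in the cone far enough from its boundary, with a further bounded correction in the boundary layers. Granting this, the coset ball $\{v\in\Z^n:|t_q v|_S\le r\}$ agrees, up to boundary layers of bounded width, with $rP\cap\Z^n$ for a fixed rational polytope $P$, so $\beta_{t_qN}(r)$ is an eventually quasi-polynomial function of $r$; by the theory of rational generating functions (Proposition~\ref{prop:EQP}, in the Ehrhart quasi-polynomial form) its generating series is rational. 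Summing the finitely many rational series over the cosets gives rationality of $B_{G,S}(z)$.

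The main obstacle is establishing this eventual quasi-linear structure of the length function, and this is where the bulk of Benson's work lies. The idea is that a geodesic reaching a far-away coset point decomposes into a bounded ``prefix'' fixing the $Q$-component together with a long portion built from a finite repertoire of elementary ``moves'' — conjugates of generators and short loops through $Q$ — whose net effects in $\Z^n$ form a finite set of vectors $w_1,\dots,w_k$; asymptotically $|t_q v|_S$ is then the optimal value of an integer program $\min\{\sum|a_i|:\sum a_i w_i=v-(\text{bounded correction})\}$, and the value function of such a program is known to be piecewise quasi-linear over a polyhedral fan. The genuinely delicate point is that $Q$ is nonabelian, so the order in which moves are performed matters and not merely their multiplicities; one must argue, using finiteness of $Q$, that reordering moves changes both the length and the displacement by amounts bounded independently of $v$, so that only the multiset of moves is relevant in the asymptotic regime. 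An essentially equivalent alternative route is to exhibit a regular language $L\subseteq(S\cup S^{-1})^*$ of geodesic normal forms mapping bijectively onto $G$ — the work there being to describe geodesic exponent vectors in $\Z^n$ as a finite union of shifted cones (a semilinear, hence regular, set) and to glue these descriptions across the finitely many cosets — after which rationality of $\sum_{w\in L}z^{|w|}$ follows from the transfer-matrix method for regular languages.
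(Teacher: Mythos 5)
This statement is not proved in the paper at all: it is quoted from the literature with a citation to Benson, so there is no internal proof to compare against. Measured against Benson's actual argument, your outline identifies the right strategy (pass to a finite-index normal $N\cong\Z^n$, split $B_{G,S}$ over the finitely many cosets $tN$, and reduce rationality to an eventually quasi-polynomial count of lattice points in each coset ball via Proposition~\ref{prop:EQP}), and the reductions you do carry out — the finite-index normal subgroup, the coset decomposition, the bookkeeping of right multiplication as an affine action on $Q\times\Z^n$ — are correct.

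However, as written the proposal has a genuine gap exactly where you say ``the bulk of Benson's work lies'': the claim that $v\mapsto|t_qv|_S$ is eventually quasi-linear over a rational polyhedral fan, so that the coset ball agrees with $rP\cap\Z^n$ up to boundary layers of bounded width, is asserted rather than proved, and it is the entire content of the theorem. The heuristic you give (geodesics decompose into a bounded prefix plus a multiset of elementary moves, the order of moves only matters up to uniformly bounded corrections because $Q$ is finite, and the value function of the resulting integer program is piecewise quasi-linear) is plausible but each step needs an argument: one must show a geodesic can be normalised into such a form without increasing length, that the ``carry'' errors from reordering are bounded \emph{uniformly} in $v$ and in the (unboundedly long) word, and that the finitely many move-vectors $w_1,\dots,w_k$ together with their weights really capture the exact, not just coarse, word length — coarse control would only give $\beta_{t_qN}(r)\sim r^n$, which is far weaker than rationality. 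Benson's published proof handles this by a different bookkeeping device: he sets up a finite family of ``patterns'' of weighted words over a finite alphabet of coset representatives and translations, shows the set of minimal (geodesic) representatives is a \emph{polyhedral set} in $\Z^k$ (defined by linear equalities, inequalities and congruences), and proves directly that the weighted growth series of any polyhedral set is rational; the quasi-polynomiality of the ball sizes is a corollary rather than the engine. So your sketch is an accurate road map of the standard approach, but to stand as a proof it would need the polyhedral/quasi-linear structure theorem for the coset length function (or, in your alternative phrasing, the construction of a geodesic regular language mapping bijectively to $G$) supplied in full, and that is precisely the nontrivial part that cannot be waved through.
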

\begin{theorem}[\cite{Evetts}]
	Let \(G\) be a finitely-generated virtually abelian group. Then the conjugacy growth series of \(G\) is rational, with respect to any choice of finite generating set.
\end{theorem}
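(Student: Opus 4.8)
The plan is to run, for conjugacy growth, the same machine that proves rationality of the \emph{standard} growth series in the virtually abelian case (Theorem \ref{thm:Benson} and \cite{Benson}): reduce everything to counting lattice points in ``polyhedral'' (semilinear, i.e.\ Presburger-definable) subsets of \(\Z^n\times\N\), and then invoke the fact that such counting families have rational generating functions. The one new ingredient is a description of the conjugacy classes of \(G\) as an orbit space for an affine action of a virtually abelian group; once this is in place the argument becomes essentially formal. I would run the argument for an arbitrary fixed finite generating set \(S\), so that rationality ``with respect to any choice'' comes out automatically (note that Proposition \ref{prop:gensetequiv} does \emph{not} help here, since equivalence of growth functions does not preserve rationality of series).

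First I would pass to a normal subgroup \(K\leq G\) with \(K\cong\Z^n\) and \(Q:=G/K\) finite (the case \(G\) finite being trivial), fix coset representatives \(\{t_q : q\in Q\}\), and use the fibration \(\pi\colon G\to Q\). Conjugation by \(g\) carries the fibre \(t_qK\) to \(t_{q'}K\) with \(q'=\pi(g)q\pi(g)^{-1}\), so each conjugacy class of \(G\) lies over a single conjugacy class of \(Q\), and for a fixed representative \(q\) the classes lying over \([q]_Q\) are in bijection with the orbits of the conjugation action of \(\widetilde C_q:=\pi^{-1}(C_Q(q))\) on the fibre \(t_qK\cong\Z^n\). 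In the \(\Z^n\)-coordinate of that fibre this is an action by integer affine transformations, whose image \(\Gamma_q\leq \GL_n(\Z)\ltimes\Z^n\) is a finitely generated virtually abelian group; the translations with trivial linear part form a sublattice \(L\leq\Z^n\), and for each linear part \(A\) in the (finite) image \(\overline{\Gamma}_q\) the set \(\{v:(A,v)\in\Gamma_q\}\) is a single coset \(c_A+L\). Hence \(\Gamma_q\) is a finite union of cosets of \(L\) (tagged by linear parts), so the orbit relation ``\(b'\) and \(b\) lie in the same \(\Gamma_q\)-orbit'', i.e.\ \(\bigvee_{A\in\overline{\Gamma}_q}\bigl(b'-Ab\in c_A+L\bigr)\), is Presburger-definable on \(\Z^n\times\Z^n\). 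Consequently there is a Presburger-definable transversal \(T_q\subseteq\Z^n\) for the \(\Gamma_q\)-orbits: take the minimum of each orbit in a fixed Presburger-definable well-ordering of \(\Z^n\) (the lexicographic product of \(n\) copies of the order \(0,1,-1,2,-2,\dots\) on \(\Z\), which is a well-order of order type \(\omega^n\)); being the minimum of one's orbit is a universal quantifier over a Presburger relation, hence Presburger.

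Next I would import from Benson's analysis of word metrics on virtually abelian groups the metric input: for any finite generating set \(S\), any \(g\in G\), and any \(p\in Q\), the set \(\{(c,m)\in\Z^n\times\N : |g\,t_p c\,g^{-1}|_S\leq m\}\) is Presburger-definable (equivalently, the \(S\)-balls are polyhedral up to a modular constraint). Now a conjugacy class \(\kappa\) over \([q]_Q\) meets only finitely many fibres, those over \(q=q_1,\dots,q_k\in[q]_Q\), and in the fibre over \(q_i\) its elements are \(g_i\,(\kappa\cap t_qK)\,g_i^{-1}\) for a fixed \(g_i\) with \(\pi(g_i)q\pi(g_i)^{-1}=q_i\); moreover \(\kappa\cap t_qK=\{t_qb':b'\in\mathrm{orbit}(b)\}\) for the orbit corresponding to \(\kappa\). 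Therefore
\[ |\kappa|_S=\min_{b'\in\mathrm{orbit}(b)}\ \min_{1\leq i\leq k}\ |g_i\,t_q b'\,g_i^{-1}|_S, \]
and since \(b'\mapsto g_i t_q b' g_i^{-1}\) lands in the fibre over \(q_i\) with \(\Z^n\)-coordinate an integer affine function of \(b'\), each inner term is Presburger in \((b',m)\) by the previous paragraph (pullback along an integer affine map preserves Presburger-definability). Minimising over the finite index set and over the Presburger orbit relation shows that \(\{(b,m):|\kappa_b|_S\leq m\}\) is Presburger, where \(\kappa_b\) denotes the class corresponding to the orbit through \(b\). Intersecting with \(T_q\) gives a Presburger-definable set \(E_q=\{(b,m):b\in T_q,\ |\kappa_b|_S\leq m\}\subseteq\Z^n\times\N\) with all slices \(\{b:(b,m)\in E_q\}\) finite, and
\[ c_{G,S}(n)=\sum_{[q]\in\mathcal{C}_Q}\#\{b\in\Z^n:(b,n)\in E_q\}. \]

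Finally I would invoke the standard fact underpinning \cite{Benson} (cf.\ also \cite{Stanley}): if \(E\subseteq\Z^n\times\N\) is Presburger-definable with finite slices, then \(\sum_{m\geq0}\#\{b:(b,m)\in E\}\,z^m\) is a rational function. Applying this to each \(E_q\) and summing over the finitely many conjugacy classes of \(Q\) shows \(C_{G,S}(z)\) is rational; as \(S\) was arbitrary, the theorem follows. The main obstacle is the metric input cited from \cite{Benson}: proving that word-length sublevel sets on the cosets of \(K\) are genuinely Presburger-definable (not merely ``polyhedral up to bounded error'') is the one genuinely hard step, and it is exactly what Benson's theorem supplies. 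The conjugacy-class bookkeeping — identifying classes with affine-group orbits and extracting \(|\kappa|_S\) as a minimum of finitely many translated word-length functions — is fiddly but routine, and the passage from Presburger-definability to rational generating functions is purely combinatorial.
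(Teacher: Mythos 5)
The paper itself does not prove this statement: it is quoted, with attribution, from \cite{Evetts}, so there is no internal proof to compare yours against. Judged on its own terms, your outline is an essentially correct reconstruction of the strategy of the cited work: pass to a finite-index normal \(K\cong\Z^n\) with finite quotient \(Q\), identify the conjugacy classes of \(G\) lying over a fixed class of \(Q\) with the orbits of an affine action of the preimage of a centraliser on the fibre, and feed everything into Benson-style lattice-point counting. Your observation that \(\Gamma_q\) is a finite union of cosets of a sublattice \(L\) indexed by the finitely many linear parts (so the orbit relation is Presburger-definable), the selection of orbit representatives as minima for a definable well-order, and the reduction of \(|\kappa|_S\) to a minimum of finitely many translated word-length functions are all sound; and you correctly isolate the genuinely hard input, namely Benson's theorem that word-length sublevel sets on cosets of \(K\) are definable by linear inequalities and congruences uniformly in the length parameter.

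Two points would need more care in a full write-up. First, Benson's counting results are stated for quantifier-free polyhedral sets (finite Boolean combinations of linear equalities, inequalities and congruences), whereas your sets are built with quantifiers (\(\exists\) over the orbit relation and over subgroup membership of \(L\), \(\forall\) in the definition of \(T_q\)); you must invoke Presburger quantifier elimination, i.e.\ the coincidence of Presburger-definable and semilinear/polyhedral sets, to return to the setting where the counting theorem applies. Second, the closing step -- rationality of \(\sum_{m}\#\{b:(b,m)\in E\}z^m\) for an arbitrary Presburger family with finite slices -- is true but is not literally contained in \cite{Benson}; one either cites a Woods-type theorem on Presburger counting functions or decomposes the specific semilinear sets at hand into simple linear sets with independent periods and sums geometric series. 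Neither point is a mathematical gap, but both are exactly where the published proof in \cite{Evetts} works directly with Benson's polyhedral machinery (including an explicit choice of minimal representatives) rather than appealing to general Presburger facts, so your route is best viewed as the same argument phrased in model-theoretic language.
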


\subsection{GCD sums}
To count conjugacy classes we will need various facts about greatest common divisors of tuples of integers, starting with the following lemma of Fern\'andez and Fern\'andez.

\begin{lemma}[Section 3 of \cite{FernFern}]\label{lem:FF}	
	For $n\geq1$, let $X^{(n)}_1, X^{(n)}_2,\ldots$ be a sequence of independent random variables, uniformly distributed in $\{1,2,\ldots,n\}$. Then the expected value of the greatest common divisor of the first $s$ of these random variables behaves as follows.
	\begin{equation*}
		\mathbb{E}\left(\gcd\left(X_1^{(n)},X_2^{(n)},\ldots,X_s^{(n)}\right)\right)=\begin{cases}
			\frac{1}{\zeta(2)}\log n + C + \cO\left(\frac{\log n}{\sqrt{n}}\right) & s=2 \\
			\frac{\zeta(s-1)}{\zeta(s)} + \cO\left(\frac{\log n}{n}\right) & s\geq 3
		\end{cases}
	\end{equation*}
	where $C\geq0$ is a constant.
\end{lemma}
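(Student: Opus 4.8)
The plan is to begin from an exact arithmetic identity and only then extract asymptotics. By uniformity,
\[\mathbb{E}\bigl(\gcd(X_1^{(n)},\dots,X_s^{(n)})\bigr)=\frac{1}{n^s}\sum_{x_1,\dots,x_s=1}^{n}\gcd(x_1,\dots,x_s),\]
and applying the identity $m=\sum_{d\mid m}\phi(d)$ to $m=\gcd(x_1,\dots,x_s)$ and interchanging summation (the inner condition being $d\mid x_i$ for every $i$, so the inner count is $\lfloor n/d\rfloor^{s}$) gives the closed form
\[\mathbb{E}=\frac{1}{n^s}\sum_{d=1}^{n}\phi(d)\left\lfloor\frac{n}{d}\right\rfloor^{s}.\]
Everything reduces to estimating this sum. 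The first move is to compare $\lfloor n/d\rfloor^{s}$ with $(n/d)^{s}$, using $\lfloor n/d\rfloor^{s}=(n/d)^{s}+O\bigl((n/d)^{s-1}\bigr)$ uniformly for $1\le d\le n$, at a cost of $O\bigl(n^{-1}\sum_{d\le n}\phi(d)d^{1-s}\bigr)$.

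For $s\ge 3$ this already finishes the job. The main term becomes $\sum_{d\le n}\phi(d)/d^{s}$, which converges to the Dirichlet-series value $\sum_{d\ge1}\phi(d)/d^{s}=\zeta(s-1)/\zeta(s)$ (from $\phi(d)=\sum_{e\mid d}\mu(e)\,d/e$), with tail $O(n^{2-s})$; and the comparison error above is $O(\log n/n)$ when $s=3$ and $O(1/n)$ when $s\ge4$. Hence $\mathbb{E}=\zeta(s-1)/\zeta(s)+O(\log n/n)$.

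The case $s=2$ is where the real work — and the main obstacle — lies, because $\sum_{d\le n}\phi(d)/d^{2}$ itself diverges like $\log n/\zeta(2)$, so the crude comparison above is too lossy. Instead I would expand $\lfloor n/d\rfloor^{2}=(n/d)^{2}-2(n/d)\{n/d\}+\{n/d\}^{2}$ and split $\mathbb{E}=A-B+C$ with $A=\sum_{d\le n}\phi(d)/d^{2}$, $B=\tfrac2n\sum_{d\le n}(\phi(d)/d)\{n/d\}$ and $C=\tfrac1{n^2}\sum_{d\le n}\phi(d)\{n/d\}^{2}$. For $A$: write $\phi(d)/d^{2}=\tfrac1d\sum_{e\mid d}\mu(e)/e$, interchange summation, and apply $\sum_{m\le x}1/m=\log x+\gamma+O(1/x)$ together with $\sum_{e\le n}\mu(e)/e^{2}=1/\zeta(2)+O(1/n)$ to obtain $A=\tfrac{\log n}{\zeta(2)}+C_1+O(\log n/n)$ for an explicit $C_1$. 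For $B$ and $C$, which are only $O(1)$ a priori, the content is that each converges to a constant: using $\phi(d)/d=\sum_{e\mid d}\mu(e)/e$ and $\phi(d)=\sum_{e\mid d}\mu(e)\,d/e$ respectively and reversing the order of summation, they reduce to inner sums of the shape $\sum_{m\le x}\{x/m\}$ and $\sum_{m\le x}m\{x/m\}^{2}$. These are evaluated via classical divisor-sum estimates — $\sum_{m\le x}\{x/m\}=(1-\gamma)x+O(\sqrt x)$ (deduced from $\sum_{k\le x}\tau(k)=x\log x+(2\gamma-1)x+O(\sqrt x)$), and the analogous $\sum_{m\le x}m\{x/m\}^{2}=c\,x^{2}+O(x^{3/2})$ (deduced from $\sum_{k\le x}k\tau(k)$ and $\sum_{k\le x}\sigma(k)$) — which, fed back through the tail bound on $\sum_{e\le n}\mu(e)/e^{2}$, give $B=C_2+O(1/\sqrt n)$ and $C=C_3+O(1/\sqrt n)$. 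Assembling the three pieces yields $\mathbb{E}=\tfrac{\log n}{\zeta(2)}+C+O(\log n/\sqrt n)$ with $C=C_1-C_2+C_3$, and $C\ge0$ is then read off from the explicit values of the constants. The real difficulty is entirely bookkeeping: keeping every error term in the $s=2$ reduction below $O(\log n/\sqrt n)$, the $\sqrt n$-saving being exactly the strength of the divisor-problem error term that the argument forces one to invoke.
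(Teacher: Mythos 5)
The paper does not prove this lemma at all: it is imported verbatim as a citation to Section 3 of \cite{FernFern}, so there is no internal proof to compare against. Your argument is a correct, self-contained elementary derivation of the cited result. The exact identity $\mathbb{E}=\frac{1}{n^s}\sum_{d\le n}\phi(d)\lfloor n/d\rfloor^s$ is right, and for $s\ge3$ the comparison $\lfloor n/d\rfloor^s=(n/d)^s+\cO((n/d)^{s-1})$ together with the convergent Dirichlet series $\sum_{d\ge1}\phi(d)d^{-s}=\zeta(s-1)/\zeta(s)$ and the tail bound $\cO(n^{2-s})$ does give the stated $\cO(\log n/n)$ error. For $s=2$ your split $\mathbb{E}=A-B+C$ is sound: $A=\frac{\log n}{\zeta(2)}+C_1+\cO(\log n/n)$ follows as you say from $\phi(d)/d^2=\frac1d\sum_{e\mid d}\mu(e)/e$; and the inner sums you reduce $B$ and $C$ to are indeed classical, with $\sum_{m\le x}\{x/m\}=(1-\gamma)x+\cO(\sqrt x)$ from Dirichlet's divisor theorem, and $\sum_{m\le x}m\{x/m\}^2=cx^2+\cO(x^{3/2})$ obtainable exactly as you indicate, via $m\{x/m\}^2=x^2/m-2x\lfloor x/m\rfloor+m\lfloor x/m\rfloor^2$ and $\sum_{m\le x}m\lfloor x/m\rfloor^2=2\sum_{k\le x}k\tau(k)-\sum_{k\le x}\sigma(k)$; feeding these back through $\sum_{e\le n}\mu(e)/e^2=1/\zeta(2)+\cO(1/n)$ gives $B,C=\text{const}+\cO(n^{-1/2})$, and the assembled error $\cO(\log n/\sqrt n)$ matches the statement. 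The only step left as an assertion is $C\ge0$, which genuinely requires evaluating $C_1-C_2+C_3$ (it comes out positive, essentially $\frac{1}{\zeta(2)}\bigl(2\gamma-\tfrac12-\tfrac{\zeta(2)}{2}-\tfrac{\zeta'(2)}{\zeta(2)}\bigr)$-type constants); note, though, that for the way the lemma is used in Corollary \ref{cor:gcdsum} the sign of $C$ is immaterial, since the constant term is absorbed into the $\cO(n^2)$ anyway.
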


For our purposes we will phrase this in terms of the sum of the greatest common divisors of tuples of integers whose absolute values are at most $n$.
\begin{definition}
	We define two different $n$-balls in the free abelian group $\Z^s$.
	\begin{enumerate}
		\item Let $B^{(s)}_\square(n)=\{(x_1,\ldots,x_s)\in\Z^s\mid |x_i|\leq n\text{ for each }1\leq i\leq s\}$. That is, the $n$-ball in $\Z^s$ with respect to the `cubical' generating set \[\{(\varepsilon_1,\ldots,\varepsilon_s)\mid \varepsilon_i\in\{0,1,-1\}\}.\]
		\item Let $B^{(s)}_{\ell_1}(n)=\{(x_1,\ldots,x_s)\in\Z^s\mid \sum |x_i|\leq n\}$. That is, the $n$-ball in $\Z^s$ with respect to the generating set consisting of standard basis vectors.
	\end{enumerate}
	We will omit the superscript $s$ when it is clear which dimension we are working with.
\end{definition}
Then Lemma \ref{lem:FF} can be reinterpreted as follows.
\begin{corollary}\label{cor:gcdsum}
	Let \(\bm{x}=(x_1,\ldots,x_s)\in\Z^s\). Then
	\[\sum_{\bm{x}\in B^{(2)}_\square(n)}\gcd(\bm{x})=\frac{R_2}{\zeta(2)}n^2\log n+\cO(n^2)\] where $R_2\in\Q$, and \[\sum_{\bm{x}\in B^{(s)}_\square(n)}\gcd(\bm{x})=R_s\frac{\zeta(s-1)}{\zeta(s)}n^s + \cO(n^{s-1}\log n)\] where $R_s\in\Q$ depends on the dimension $s$.
\end{corollary}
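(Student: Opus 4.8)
The plan is to translate Lemma~\ref{lem:FF} from a statement about expectations into a statement about sums, and to reconcile the two different cube-counting conventions (the probabilistic model samples from $\{1,\ldots,n\}$, while $B^{(s)}_\square(n)$ ranges over $\{-n,\ldots,n\}$). First I would observe that, since $\gcd(\bm{x})$ depends only on $(|x_1|,\ldots,|x_s|)$ and $\gcd$ involving a zero coordinate is just the $\gcd$ of the remaining coordinates, the sum over $B^{(s)}_\square(n)$ decomposes as a signed, lower-dimensional combination of sums of the form $\sum_{1\le x_i\le n}\gcd(x_1,\ldots,x_k)$ for $k\le s$. Concretely, $\sum_{\bm{x}\in B^{(s)}_\square(n)}\gcd(\bm{x}) = \sum_{k=0}^{s}\binom{s}{k}2^k S_k(n)$ where $S_k(n)=\sum_{\bm x\in\{1,\ldots,n\}^k}\gcd(\bm x)$ (with the $k=0$ and $k=1$ terms handled as degenerate cases), because choosing which coordinates are nonzero and their signs gives exactly this. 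Since $S_k(n) = n^k\,\mathbb{E}(\gcd(X_1^{(n)},\ldots,X_k^{(n)}))$, Lemma~\ref{lem:FF} gives $S_k(n)=\frac{1}{\zeta(2)}n^2\log n+Cn^2+\cO(n^{3/2}\log n)$ for $k=2$, and $S_k(n)=\frac{\zeta(k-1)}{\zeta(k)}n^k+\cO(n^{k-1}\log n)$ for $k\ge 3$; also $S_1(n)=\binom{n+1}{2}$ and $S_0(n)=1$ trivially.

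Next I would assemble these pieces. For $s=2$ the only term that reaches the order $n^2\log n$ is $k=2$, contributing $\binom{2}{2}2^2 S_2(n)=4S_2(n)$, so the $n^2\log n$ coefficient is $4/\zeta(2)$; the remaining terms ($k=0,1$) are $\cO(n^2)$ and absorb into the error. This gives the first formula with $R_2=4$ (a rational number, as claimed). For $s\ge 3$, the dominant term is $k=s$, contributing $2^s S_s(n)=2^s\frac{\zeta(s-1)}{\zeta(s)}n^s+\cO(n^{s-1}\log n)$; every lower $k$ contributes at most $\cO(n^{s-1}\log n)$ (the worst being $k=s-1$, which is $\cO(n^{s-1})$, and $k=2$ if $s=3$, which is $\cO(n^2\log n)=\cO(n^{s-1}\log n)$). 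Hence $R_s=2^s\in\Q$ depends only on $s$, and the error is $\cO(n^{s-1}\log n)$ as stated.

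The only genuinely delicate point is bookkeeping the error terms when $s=3$: here the $k=2$ summand carries a logarithm and sits at order $n^2\log n = n^{s-1}\log n$, which is exactly the claimed error order, so it is benign — but one must check it does not accidentally inflate the main term, which it does not since the main term is order $n^3$. Everything else is routine: the combinatorial decomposition by support and sign is elementary, and the asymptotics are quoted directly from Lemma~\ref{lem:FF}. I would write the proof as: (1) state the support/sign decomposition; (2) substitute Lemma~\ref{lem:FF}; (3) read off the leading term and bound the rest, treating $s=2$ and $s\ge 3$ separately.
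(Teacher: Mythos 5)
Your proof is correct, and it uses the same key input as the paper (Lemma~\ref{lem:FF}, converted from an expectation into a sum), but the execution differs in a way worth noting. The paper's proof is a two-sentence sketch: the sum over the ball equals the expected value times the cardinality of the ball, the ball has $\sim Dn^s$ points, and the rationality of the constant is obtained \emph{indirectly} -- if the leading coefficient of the growth of $\Z^s$ were irrational, Proposition~\ref{prop:leadingcoeff} would make the standard growth series irrational, contradicting Theorem~\ref{thm:Benson}. It silently glosses over the mismatch between the probabilistic model on $\{1,\ldots,n\}^s$ and the cube $\{-n,\ldots,n\}^s$ (signs and zero coordinates). Your decomposition by support and sign, $\sum_{\bm{x}\in B^{(s)}_\square(n)}\gcd(\bm{x})=\sum_{k}\binom{s}{k}2^kS_k(n)$ with $S_k(n)=n^k\,\mathbb{E}(\gcd(X_1^{(n)},\ldots,X_k^{(n)}))$, handles exactly this point, and as a bonus produces the constants explicitly ($R_2=4$, $R_s=2^s$), so rationality is immediate and no appeal to growth series rationality is needed; your case check that the $k=s-1$ and (for $s=3$) $k=2$ terms stay within $\cO(n^{s-1}\log n)$ is the right bookkeeping. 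The only cosmetic quibble is the $k=0$ term: with the usual convention $\gcd(0,\ldots,0)=0$ the zero vector contributes nothing rather than $S_0(n)=1$, but this is an $\cO(1)$ discrepancy absorbed by the error term either way. In short, your argument is a more self-contained and explicit version of the paper's, at the cost of being tied to the cubical ball, whereas the paper's indirect rationality argument is phrased so as to apply to the leading coefficient for an arbitrary choice of generating set.
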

\begin{proof}
	The sum of the values of a function over some fixed finite domain is equal to the expected value of the function over the domain, multiplied by the cardinality of the domain. The standard growth function of $\Z^s$ is equivalent to $Dn^s$ (by Theorem \ref{thm:BG}), where $D\in\R$ depends on the choice of generating set, but is always rational since otherwise Proposition \ref{prop:leadingcoeff} would imply that the standard growth series was irrational, contradicting Theorem \ref{thm:Benson}.
\end{proof}

We will also need the following generalisation of Corollary \ref{cor:gcdsum}, showing that offsetting \(\bm{x}\) by a constant does not affect the asymptotics of the GCD sum.
\begin{corollary}\label{cor:gcdoffset}
	Fix an element $\bm{a}=(a_1,\ldots,a_s)\in\Z^s$. Then
	\[\sum_{\bm{x}\in B^{(2)}_\square(n)}\gcd(\bm{x}+\bm{a})=\frac{R_2}{\zeta(2)}n^2\log n+\cO(n^2)\] and \[\sum_{\bm{x}\in B^{(s)}_\square(n)}\gcd(\bm{x}+\bm{a})=R_s\frac{\zeta(s-1)}{\zeta(s)}n^s + \cO(n^{s-1}\log n)\] where $R_2, R_s\in\Q$ are the same as in Corollary \ref{cor:gcdsum}.
\end{corollary}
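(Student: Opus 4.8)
The plan is to reduce the offset sum to the non-offset sum of Corollary \ref{cor:gcdsum} by absorbing the translation $\bm{a}$ into a slightly larger (or smaller) cubical ball and controlling the discrepancy. First I would observe that the map $\bm{x}\mapsto\bm{x}+\bm{a}$ is a bijection from $B^{(s)}_\square(n)$ onto the translated box $\bm{a}+B^{(s)}_\square(n)$, so that
\[\sum_{\bm{x}\in B^{(s)}_\square(n)}\gcd(\bm{x}+\bm{a})=\sum_{\bm{y}\in\,\bm{a}+B^{(s)}_\square(n)}\gcd(\bm{y}).\]
Setting $A=\max_i|a_i|$, the translated box satisfies the sandwich $B^{(s)}_\square(n-A)\subseteq\bm{a}+B^{(s)}_\square(n)\subseteq B^{(s)}_\square(n+A)$ for $n\geq A$. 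Since $\gcd$ is non-negative, the offset sum lies between $\sum_{\bm{y}\in B^{(s)}_\square(n-A)}\gcd(\bm{y})$ and $\sum_{\bm{y}\in B^{(s)}_\square(n+A)}\gcd(\bm{y})$.

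Next I would apply Corollary \ref{cor:gcdsum} to the two bounding sums. For $s\geq3$ this gives
\[R_s\frac{\zeta(s-1)}{\zeta(s)}(n\pm A)^s+\cO\bigl((n\pm A)^{s-1}\log(n\pm A)\bigr),\]
and expanding $(n\pm A)^s=n^s+\cO(n^{s-1})$ (with $A$ a fixed constant) absorbs the correction into the existing $\cO(n^{s-1}\log n)$ error term, leaving the same leading constant $R_s\zeta(s-1)/\zeta(s)$. For $s=2$ the bounding sums are $\frac{R_2}{\zeta(2)}(n\pm A)^2\log(n\pm A)+\cO((n\pm A)^2)$; since $(n\pm A)^2\log(n\pm A)=n^2\log n+\cO(n^2)$ (as $(n\pm A)^2=n^2+\cO(n)$ and $\log(n\pm A)=\log n+\cO(1/n)$), both bounds equal $\frac{R_2}{\zeta(2)}n^2\log n+\cO(n^2)$, and the squeeze forces the offset sum to have the same asymptotics. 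The constants $R_2,R_s$ are literally inherited from Corollary \ref{cor:gcdsum}, as claimed.

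The only mildly delicate point is bookkeeping the error terms in the $s=2$ case: one must check that multiplying the $\cO(n)$ discrepancy in $(n+A)^2$ by $\log n$ still produces something that is $\cO(n^2)$ — which it is, since $n\log n\prec n^2$ — and similarly that the lower-order $\cO((n\pm A)^2)$ term is unaffected by the shift. I do not expect any genuine obstacle here; the entire argument is a monotonicity squeeze plus elementary expansion of $(n\pm A)^s$, and no new number-theoretic input beyond Corollary \ref{cor:gcdsum} is needed.
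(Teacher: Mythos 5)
Your argument is correct and is essentially the same as the paper's: both sandwich the translated box between $B^{(s)}_\square(n-a_{\max})$ and $B^{(s)}_\square(n+a_{\max})$, apply Corollary \ref{cor:gcdsum} to the bounding sums, and note that shifting $n$ by a fixed constant does not change the asymptotics. Your extra bookkeeping of the $s=2$ error terms is just a more explicit version of that final observation.
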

\begin{proof}
	Let \(a_{\max}=\max\left(|a_1|,\ldots,|a_s|\right)\). Then we have
	\begin{align*}
		B^{(s)}_\square(\min\left(n-a_{\max},0\right)) \subseteq \{\bm{x}+\bm{a}\mid \bm{x}\in B_\square^{(s)}(n)\} \subseteq B^{(s)}_\square(n+a_{\max})
	\end{align*}
	for all \(n\) and thus
	\begin{align*}
		\sum_{\bm{x}\in B^{(s)}_\square(\min\left(n-a_{\max},0\right))}\gcd(\bm{x}) \leq \sum_{\bm{x}\in B_\square^{(s)}(n)}\gcd(\bm{x}+\bm{a}) \leq \sum_{\bm{x}\in B_\square^{(s)}(n+a_{\max})}\gcd(\bm{x}).
	\end{align*}
	Applying Corollary \ref{cor:gcdsum} gives the result, since adding a constant to \(n\) does not alter the asymptotic expressions on the right hand sides.
\end{proof}

\section{A Family of Class 2 Nilpotent Groups}\label{sec:stollgroups}
In this section we discuss the nilpotent groups of class 2 whose derived subgroup is infinite cyclic. This includes the following family.
\begin{definition}
	The \emph{(higher) Heisenberg groups} are class 2 nilpotent groups, with a parameter $r\in\N_+$, given by the following presentation.
	\begin{equation*}
		H_r=\left\langle a_1,b_1,a_2,b_2,\ldots,a_r,b_r \,\middle\vert\, \begin{tabular}{l}
			$[a_i,a_j]=[a_i,b_j]=[b_i,b_j]=1~\forall i\neq j$ \\
			$[a_i,b_i]=[a_j,b_j]~\forall i\neq j$ \\
			$[[a_i,b_i],a_j]=[[a_i,b_i],b_j]=1~\forall i, j$
		\end{tabular} \right\rangle.
	\end{equation*}
\end{definition}
The commutator subgroup \(H_r^{(1)}\) is infinite cyclic, generated by the commutator $c=[a_i,b_i]$. These groups play an important role in the story of standard growth, as they provide essentially the only known examples of growth series behaviour which depends on the choice of generating set, as we will see in Stoll's result, Theorem \ref{thm:Stoll}. Furthermore, Duchin and Shapiro have shown \cite{DuchinShapiro} that the first Heisenberg group (also known as the integer or discrete Heisenberg group) \(H_1\) has rational standard growth series with respect to any choice of finite generating set.

\subsection{Stoll's classification}
In \cite{Stoll}, Stoll classifies all finitely generated class 2 nilpotent groups with infinite cyclic derived subgroup in terms of the first Heisenberg group $H_1$. We summarise this classification below.

Let $G_1$ and $G_2$ be groups with central subgroups $Z_1$ and $Z_2$ respectively. Suppose that there exists an abelian group $Z$ and homomorphisms $\varphi_1\colon Z_1\to Z$ and $\varphi_2\colon Z_2\to Z$, and consider the product homomorphism $\varphi\colon Z_1\times Z_2\rightarrow Z$ defined by
\begin{align*}
\varphi\colon(z_1, z_2)\mapsto\varphi_1(z_1)\varphi_2(z_2).
\end{align*}
Furthermore, suppose that $\varphi$ is surjective.
\begin{definition}
	The group \[G=\frac{G_1\times G_2}{\ker\varphi}\] is called the \emph{centrally amalgamated direct product of $G_1$ and $G_2$ with respect to $\varphi$}. We will write \emph{central product} for brevity.
\end{definition}

\begin{example}
	Let $G_1\cong G_2\cong\Z^2$ be given by the presentations \[G_1=\langle x_1,y_1\mid[x_1,y_1]\rangle,~G_2=\langle x_2,y_2\mid[x_2,y_2]\rangle,\] and let $Z=\langle z\mid z^2\rangle$. Define homomorphisms from the second direct factor of each $G_i$ to $Z$ as $\varphi_i\colon \langle y_i\rangle\to Z$ given by $\varphi\colon y_i\mapsto z$. Then the centrally amalgamated direct product, $G$, of $G_1$ and $G_2$ with respect to $\varphi$ is given by the presentation \[\langle x_1,x_2,z\mid [x_1,x_2], [x_1,z], [x_2,z], z^2\rangle\cong\Z^2\times\Z/2\Z.\]
\end{example}

From now on we will deal exclusively with the case where each $Z_i$ is infinite cyclic, generated by an element $z_i$, and similarly $Z$ is infinite cyclic, generated by an element $z$. Hence, each $\varphi_i$ is determined by an integer $d_i$ as follows $\varphi_i\colon z_i\mapsto z^{d_i}$. Given pairs $(G_1,z_1)$ and $(G_2,z_2)$ such that each $\langle z_i\rangle$ is an infinite cyclic central subgroup of $G_i$, we will write $(G_1,z_1)\otimes_d(G_2,z_2)$ for the central product of $G_1$ and $G_2$, amalgamated over the subgroups $\langle z_1\rangle$ and $\langle z_2\rangle$ with $\varphi_1(z_1)=z$ and $\varphi_1(z_2)=z^d$. If $d=1$, we simply write $(G_1,z_1)\otimes(G_2,z_2)$.
\begin{lemma}[Lemma 7.1 of \cite{Stoll}]\label{lem:Stollgrps}
	Let $G$ be a finitely generated $2$-step nilpotent group with $G^{(1)}\cong\Z$. Then there exists a finitely generated infinite abelian group $G_0$, and a tuple $D=(\delta_1,\ldots,\delta_{r-1})\in(\N_+)^{r-1}$, with $\delta_i|\delta_{i+1}$ for each $i$, such that
	\begin{equation*}
	G\cong(\cdots((((G_0,z)\otimes (H_1,c_1))\otimes_{\delta_1}(H_1,c_2))\otimes_{\delta_2}(H_1,c_3))\cdots)\otimes_{\delta_{r-1}}(H_1,c_{r}),
	\end{equation*}
	where each $H_1=\left\langle a_i,b_i\mid [[a_i,b_i],a_i], [[a_i,b_i],b_i]\right\rangle$ is a copy of the first Heisenberg group, $c_i$ denotes the commutator $[a_i,b_i]$, and \(z\) generates an infinite cyclic subgroup of \(G_0\).
\end{lemma}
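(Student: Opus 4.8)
The plan is to read the decomposition off the alternating form that commutation induces on the abelianisation, and then to rebuild $G$ as an iterated central product by lifting a symplectic basis. Since $G$ has class $2$, the subgroup $Z:=G^{(1)}\cong\Z$ is central; fix a generator $c$. Bilinearity and centrality of commutators in a class-$2$ group make $(g,h)\mapsto[g,h]$ descend to a well-defined alternating $\Z$-bilinear form $\omega\colon\Ab(G)\times\Ab(G)\to Z\cong\Z$, and the identity $k\,\omega(\bar t,\bar g)=\omega(k\bar t,\bar g)=0$ shows $\omega$ kills torsion, hence factors through the free quotient $A:=\Ab(G)/\mathrm{tor}\cong\Z^m$. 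By the normal form for alternating forms over a PID, $A$ has a basis $\bar a_1,\bar b_1,\dots,\bar a_r,\bar b_r,\bar u_1,\dots,\bar u_{m-2r}$ with $\omega(\bar a_i,\bar b_i)=d_i$ for positive integers $d_1\mid d_2\mid\cdots\mid d_r$ and $\omega=0$ on every other pair of basis vectors (so the $\bar u_j$ span the radical). As $Z$ is generated by commutators, the image of $\omega$ generates $Z$; but from the normal form that image generates $d_1\Z$, so $d_1=1$. (Also $r\geq 1$, since otherwise $G$ would be abelian, contradicting $G^{(1)}\cong\Z$.)

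\textbf{Lifting and recognising the pieces.} Choose lifts $a_i,b_i,u_j\in G$ of these basis elements and lifts $t_1,\dots,t_k\in G$ of generators of $\mathrm{tor}(\Ab(G))$, so $G=\langle a_i,b_i,u_j,t_l,c\rangle$. Using $[g,h]=c^{\omega(\bar g,\bar h)}$ one reads off: $G_0:=\langle u_j,t_l,c\rangle$ is central in $G$, finitely generated and infinite; $[H^{(i)},H^{(j)}]=1$ for $i\neq j$, where $H^{(i)}:=\langle a_i,b_i\rangle$; and $[a_i,b_i]=c^{d_i}$. Each $H^{(i)}$ is a copy of $H_1$: it is $2$-generated of class $2$, with derived subgroup $\langle c^{d_i}\rangle\cong\Z$, and — because $\bar a_i,\bar b_i$ are $\Z$-independent in $\Ab(G)$ — its abelianisation surjects onto $\Z^2$ hence equals $\Z^2$; realising $H^{(i)}$ as a quotient of the free class-$2$ nilpotent group on two generators, the kernel must lie in an infinite-cyclic subgroup and be trivial, lest the derived subgroup become finite. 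The same abelianisation computation gives $\langle c\rangle\cap H^{(i)}=G^{(1)}\cap H^{(i)}=(H^{(i)})^{(1)}=\langle c^{d_i}\rangle$. Write $c_i:=[a_i,b_i]=c^{d_i}$.

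\textbf{Assembling the central product.} Put $Q_i:=\langle G_0,H^{(1)},\dots,H^{(i)}\rangle$, so $Q_0=G_0$ and $Q_r=G$. Since $H^{(i)}$ commutes elementwise with $G_0$ and with each $H^{(j)}$ for $j<i$, it commutes with all of $Q_{i-1}$, so the multiplication map $Q_{i-1}\times H^{(i)}\to Q_i$ is a surjective homomorphism with kernel $\{(q,q^{-1}):q\in Q_{i-1}\cap H^{(i)}\}$. Passing to $\Ab(G)$ shows the images of $Q_{i-1}$ and $H^{(i)}$ intersect trivially (they are spanned by disjoint parts of a basis together with the torsion), so $Q_{i-1}\cap H^{(i)}=\langle c\rangle\cap H^{(i)}=\langle c_i\rangle$; hence $Q_i\cong(Q_{i-1}\times H^{(i)})/\langle(c^{d_i},c_i^{-1})\rangle$, which is exactly the central product $(Q_{i-1},c)\otimes_{d_i}(H^{(i)},c_i)$. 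Taking $G_0$ itself as the abelian group in the statement, $z:=c$, and $\delta_{i-1}:=d_i$ for $2\leq i\leq r$, an induction on $i$ (the first amalgamation using $d_1=1$) identifies each $Q_i$ with the corresponding initial segment of the iterated central product in the statement; the divisibilities $\delta_i\mid\delta_{i+1}$ are inherited from $d_1\mid\cdots\mid d_r$, and the case $i=r$ is the claim.

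\textbf{Main obstacle.} The reduction to an alternating form and its Smith-type normal form is routine. The care is required in the assembly step: one must rule out hidden relations in the nonabelian group $G$ that would make the subgroups $H^{(i)}$ overlap beyond their common central subgroup, and pin down $Q_{i-1}\cap H^{(i)}$ to be exactly $\langle c_i\rangle$ — with the correct power of $c$ — so that the amalgamation exponents come out equal to the $d_i$ and inherit the required divisibilities. This is precisely what passing to $\Ab(G)$, together with the facts that each $H^{(i)}$ has abelianisation $\Z^2$ and derived subgroup $\langle c^{d_i}\rangle$, is designed to control.
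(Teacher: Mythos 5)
The paper does not prove this lemma itself — it is quoted directly from Stoll (Lemma 7.1 of \cite{Stoll}) — so there is no in-paper proof to compare against; judged on its own, your argument is correct and follows the same standard route as the cited source: view commutation as an alternating $\Z$-bilinear form on $\Ab(G)$ valued in $G^{(1)}\cong\Z$, put it in divisor normal form over $\Z$ (with $d_1=1$ because commutators generate $G^{(1)}$), lift the basis, identify each $\langle a_i,b_i\rangle$ with $H_1$, and assemble the iterated central product by computing $Q_{i-1}\cap H^{(i)}=\langle c^{d_i}\rangle$ via the abelianisation. The key verifications (torsion pairs trivially, $\Ab(H^{(i)})\cong\Z^2$ forcing the kernel of $H_1\twoheadrightarrow H^{(i)}$ to be trivial, and the kernel of the multiplication map matching $\ker\varphi=\langle(c^{d_i},c_i^{-1})\rangle$) are all present and sound.
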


\begin{remark}
	Note that due to the amalgamation in \(G\), \(c_1=z\) and for each \(i\), we have \(c_i=c_1^{\delta_{i-1}}\).
\end{remark}

\begin{definition}\label{def:HD}
	Let $D=(\delta_1,\ldots,\delta_{r-1})\in(\N_+)^{r-1}$, with $\delta_i|\delta_{i+1}$ for each $i$. Then we will write
	\begin{equation*}
		H_D=(\cdots(((H_1,c_1)\otimes_{\delta_1}(H_1,c_2))\otimes_{\delta_2}(H_1,c_3))\cdots)\otimes_{\delta_{r-1}}(H_1,c_r).
	\end{equation*}
\end{definition}

Note that if $I=(1,1,\ldots,1)\in(\N_+)^{r-1}$, we can express the $r$th higher Heisenberg group as $H_r=H_I$. Since the abelian subgroup $\Gamma:=G_0/\langle z\rangle$ is central, we have the following immediate corollary of Lemma \ref{lem:Stollgrps}.
\begin{corollary}\label{cor:classification}
	Let $G$ be a finitely generated $2$-step nilpotent group with $[G,G]\cong\Z$. Then there exists a finitely generated abelian group $\Gamma$ (possibly finite or trivial) and a tuple $D=(\delta_1,\ldots,\delta_{r-1})\in(\N_+)^{r-1}$, with $\delta_i|\delta_{i+1}$ for each $i$, such that
	\begin{equation*}
	G\cong\Gamma\times H_D
	\end{equation*}
\end{corollary}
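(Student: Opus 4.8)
The plan is to combine Stoll's classification, Lemma~\ref{lem:Stollgrps}, with an elementary splitting argument for finitely generated abelian groups. First I would apply Lemma~\ref{lem:Stollgrps} to obtain a finitely generated infinite abelian group $G_0$, an element $z\in G_0$ of infinite order, and a tuple $D=(\delta_1,\dots,\delta_{r-1})$ with $\delta_i\mid\delta_{i+1}$, realising $G$ as the iterated central product of $(G_0,z)$ with $r$ copies of $H_1$ amalgamated as in that lemma.

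The second step is to reorganise the iterated central product. Since a central product over infinite cyclic subgroups is by definition a quotient of the direct product by the (cyclic) kernel of the amalgamating homomorphism, the whole of $G$ is a single quotient of $G_0\times H_1\times\dots\times H_1$ ($r$ copies) by the subgroup encoding the relations $c_1=z$ and $c_i=z^{\delta_{i-1}}$ for $i\ge 2$ (cf.\ the Remark following Lemma~\ref{lem:Stollgrps}). Grouping the Heisenberg factors first assembles the subgroup $H_D$ of Definition~\ref{def:HD}, whose derived subgroup $[H_D,H_D]=\langle c_1\rangle$ is infinite cyclic and central; a routine comparison of presentations then yields $G\cong(G_0,z)\otimes(H_D,c_1)=(G_0\times H_D)/\langle(z,c_1^{-1})\rangle$.

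Third, I would split $\langle z\rangle$ off as a direct factor of $G_0$ and finish. Writing $G_0\cong\Z^m\times T$ with $T$ the finite torsion subgroup, the image of $z$ in $\Z^m$ is a primitive vector, so extending it to a basis of $\Z^m$ produces a retraction $G_0\twoheadrightarrow\langle z\rangle$ whose kernel $\Gamma$ is a finitely generated abelian group (possibly finite or trivial) with $G_0\cong\langle z\rangle\times\Gamma$ and $\Gamma\cong G_0/\langle z\rangle$. Substituting into $(G_0\times H_D)/\langle(z,c_1^{-1})\rangle$, the defining relation $z=c_1$ absorbs the cyclic factor $\langle z\rangle$ into $\langle c_1\rangle\le H_D$ and leaves $\Gamma$ untouched as a direct factor, so $G\cong\Gamma\times H_D$. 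The main obstacle is the primitivity assertion just used: one has to know that the distinguished generator $z$ supplied by Lemma~\ref{lem:Stollgrps} can be chosen so that $\langle z\rangle$ is a direct summand of $G_0$ — equivalently, that $[G,G]$ is a direct summand of the centre $Z(G)$. This does not follow from the formal manipulation of central products in the first two steps and must be read off from Stoll's construction of $G_0$; granting it, the remainder of the argument is bookkeeping with presentations.
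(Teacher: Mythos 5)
Your overall route is the same as the paper's: the paper takes Lemma~\ref{lem:Stollgrps}, sets $\Gamma:=G_0/\langle z\rangle$, and declares the corollary immediate, which is your three steps compressed into one sentence. The value of your write-up is that you have isolated exactly where the content lies, namely the assertion that $\langle z\rangle$ is a direct summand of $G_0$ (equivalently, that $[G,G]$ is a direct summand of $Z(G)$, since the centre of the central product is the image of $G_0$ and $[G,G]$ corresponds to $\langle z\rangle$). But deferring this to ``Stoll's construction'' is a genuine gap, and it is not a cosmetic one: the statement of Lemma~\ref{lem:Stollgrps} as quoted only guarantees that $z$ has infinite order in $G_0$, and with that alone your step three fails. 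Concretely, take $G_0=\langle w\rangle\cong\Z$ and $z=w^2$; the central product $(\langle w\rangle,w^2)\otimes(H_1,c_1)$ is the group $G=\langle a,b,w\mid [a,b]=w^2,\ [a,w]=[b,w]=1\rangle$, a torsion-free class $2$ nilpotent group with $[G,G]=\langle w^2\rangle\cong\Z$ and $Z(G)=\langle w\rangle$, so $Z(G)/[G,G]\cong\Z/2\Z$. Every group of the form $\Gamma\times H_D$ (with $H_D$ as in Definition~\ref{def:HD}) has centre $\Gamma\times\langle c_1\rangle$ with $[G,G]=\langle c_1\rangle$ split off, and has torsion exactly when $\Gamma$ does; so this $G$ is not isomorphic to any $\Gamma\times H_D$, and moreover no choice of decomposition as in the lemma can make $z$ primitive for it. So the missing step cannot be recovered by ``reading it off'' in the stated generality: either Stoll's Lemma 7.1 carries extra information restricting which $(G_0,z)$ occur (and then that is the citation your proof, and the corollary, actually rest on), or the class of groups to which the direct-product conclusion applies has to be cut down to exclude such twisted central extensions.

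To be fair to you, the paper's own one-line derivation makes precisely the same tacit assumption: writing $\Gamma=G_0/\langle z\rangle$ and concluding $G\cong\Gamma\times H_D$ presupposes the splitting $G_0\cong\langle z\rangle\times\Gamma$. Your first two steps (reassembling the iterated central product as $(G_0\times H_D)/\langle (z,c_1^{-1})\rangle$ and absorbing a split $\langle z\rangle$ into $\langle c_1\rangle$) are correct and are exactly the bookkeeping the paper suppresses. So the verdict is: same approach as the paper, correctly executed except for the primitivity of $z$, which you flagged but did not establish --- and which, as the example above shows, genuinely requires input beyond Lemma~\ref{lem:Stollgrps} as stated rather than being a routine verification.
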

\begin{definition}
	As in \cite{Stoll}, the \emph{Heisenberg rank} of \(G\cong\Gamma\times H_D\) will refer to the number, \(r\), of copies of \(H_1\) appearing in the construction. Furthermore, we will write \(s\) for the torsion-free rank of the finitely generated abelian factor \(\Gamma\).
\end{definition}
To emphasise the importance of this class of groups, we record the following, which is the main result of \cite{Stoll}.
\begin{theorem}\label{thm:Stoll}
	If \(G\) is a class 2 nilpotent group with infinite cyclic derived subgroup and Heisenberg rank at least 2, then it possesses a generating set yielding rational standard growth series, and a generating set yielding transcendental standard growth series.
\end{theorem}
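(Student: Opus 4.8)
\emph{Strategy and reduction.} The plan is to pass to the building blocks $H_D$ of Definition \ref{def:HD} and to detect transcendence through the leading coefficient of the growth function via Proposition \ref{prop:leadingcoeff}. By Corollary \ref{cor:classification} we may take $G\cong\Gamma\times H_D$ with $\Gamma$ finitely generated abelian and $H_D$ of Heisenberg rank $r\ge 2$. For a generating set of product form $S=S_\Gamma\sqcup S_D$ the word length is additive, $|(\gamma,h)|_S=|\gamma|_{S_\Gamma}+|h|_{S_D}$, so the spherical growth series of $G$ is the product of those of the two factors, and hence $B_{G,S}(z)=(1-z)\,B_{\Gamma,S_\Gamma}(z)\,B_{H_D,S_D}(z)$. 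Since $B_{\Gamma,S_\Gamma}(z)$ is rational by Theorem \ref{thm:Benson} and $1-z$ is a nonzero rational function, $B_{G,S}(z)$ is rational (respectively transcendental) exactly when $B_{H_D,S_D}(z)$ is. So it suffices to produce, for $H:=H_D$ with $r\ge 2$, one generating set with rational growth series and one with transcendental growth series.

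\emph{The leading coefficient as a sub-Finsler volume.} For any finite generating set $S$ of $H$, Pansu's theorem furnishes the limit $a_S:=\lim_{n\to\infty}\beta_{H,S}(n)/n^{d}$, where $d=2r+2$ is the homogeneous dimension of Theorem \ref{thm:BG}; it equals the Haar volume of the unit ball of the limiting sub-Finsler Carnot--Carath\'eodory metric on the graded group $\mathrm{gr}(H)_{\R}\cong\R^{2r}\times\R$. Concretely, $S$ determines a norm $\|\cdot\|_S$ on the horizontal space $\R^{2r}$ whose unit ball $K_S$ is a symmetric rational polytope (the limit shape of the balls in the abelianisation), while the central coordinate is accumulated along a horizontal path as its symplectic area for $\Omega_D:=\sum_{i=1}^r\delta_{i-1}\,\omega_i$ (with $\delta_0:=1$). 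Thus $a_S=2\int_{K_S}\tau_S(v)\,dv$, where $\tau_S(v)$ is the largest central coordinate attainable from $0$ to $v$ by a horizontal path of $\|\cdot\|_S$-length at most $1-\|v\|_S$ --- equivalently, $\tau_S$ records the anisotropic isoperimetric profile of $K_S$ against $\Omega_D$. The task reduces to choosing $S$ to make this integral rational in one case, transcendental in the other.

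\emph{The two generating sets.} For the rational one I would take $S$ of ``box'' type (e.g.\ the coordinate generators $a_i^{\pm1},b_i^{\pm1}$): then $K_S$ is an $\ell^1$-type cross-polytope whose Wulff shape against each $\omega_i$ is an axis-parallel square, the extremal horizontal paths are straight runs followed by a rectangular loop, and $\tau_S$ is piecewise polynomial of the shape $\mathrm{const}\cdot(1-\|v\|_S)^2$ with rational constants, so $a_S\in\Q$. One then upgrades this to the stronger fact that $\beta_{H,S}(n)$ is eventually quasi-polynomial, by decomposing $H$ into finitely many sectors on each of which geodesics have a uniform combinatorial normal form (a bounded number of straight runs together with a bounded number of extremal rectangular loops carrying the central part) and reducing the count to lattice points in finitely many rational polyhedra --- in the spirit of Benson's argument for virtually abelian groups and Duchin--Shapiro's for $H_1$ --- so that $B_{H,S}(z)$ is rational by Proposition \ref{prop:EQP}. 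For the transcendental one I would exploit the freedom of $r\ge 2$ to choose $S$ whose generators mix two distinct Heisenberg blocks, so that $K_S$ does not split over the blocks and the isoperimetric problem for $\Omega_D$ genuinely uses all $2r\ge 4$ horizontal dimensions; for a suitably chosen such $S$ the length-minimising paths realising $\tau_S$ are no longer of the simple straight-then-rectangular form, $\tau_S$ acquires a genuinely algebraic non-polynomial dependence on $v$, and the integral $a_S$ evaluates in closed form to a nonzero combination of algebraic numbers, a power of $\pi$, and logarithms of algebraic numbers --- a transcendental number by the Lindemann--Weierstrass and Baker theorems. Proposition \ref{prop:leadingcoeff} then forces $B_{H,S}(z)$, hence $B_{G,S}(z)$, to be transcendental. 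That no such $S$ exists when $r=1$ is precisely Duchin--Shapiro's theorem that $H_1$ has rational growth series in every generating set, which is why the hypothesis $r\ge2$ is needed.

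\emph{The main obstacle.} The harder direction is the rational one: upgrading rationality of the single limit $a_S$ to eventual quasi-polynomiality of $\beta_{H,S}(n)$ requires ruling out geodesics of unbounded combinatorial complexity and controlling, uniformly over all sectors, the interplay between the $2r$ horizontal directions, the single central direction, and the divisibility data $\delta_i$ --- this is the technical heart of the Benson / Duchin--Shapiro circle of ideas, carried out here in higher rank and with twisting. On the transcendental side the work is narrower but still substantial: one must fix a single explicit generating set, carry out the $2r$-dimensional anisotropic isoperimetric optimisation and the ensuing volume integral exactly, and check that the transcendental part of $a_S$ does not cancel; and one needs the non-trivial identification of the Pansu limit with the sub-Finsler volume, with enough control of the error --- $\beta_{H,S}(n)=a_S n^d + o(n^d)$ --- for Proposition \ref{prop:leadingcoeff} to apply.
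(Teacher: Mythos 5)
First, a point of calibration: the paper does not prove this statement at all --- Theorem \ref{thm:Stoll} is recorded as the main result of \cite{Stoll} and used as a citation, so there is no internal proof to compare against; your proposal has to stand on its own as a reconstruction of Stoll's theorem. Your reduction to $H_D$ is fine (with a product generating set the spherical series multiply, so $B_{G,S}(z)=(1-z)B_{\Gamma,S_\Gamma}(z)B_{H_D,S_D}(z)$, and rationality of $B_{\Gamma,S_\Gamma}$ plus nonvanishing makes rationality/transcendence pass between $B_{G,S}$ and $B_{H_D,S_D}$), and the overall strategy --- rationality by a geodesic/quasi-polynomial count for one generating set, transcendence via the leading coefficient and Proposition \ref{prop:leadingcoeff} for another --- is indeed the shape of Stoll's actual argument.

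However, both halves of your argument stop exactly where the theorem's content begins, so there is a genuine gap. On the transcendental side you never exhibit a generating set: you posit that ``for a suitably chosen $S$'' mixing two Heisenberg blocks the sub-Finsler volume $a_S$ ``evaluates in closed form to a nonzero combination of algebraic numbers, a power of $\pi$, and logarithms of algebraic numbers.'' That is an assertion of the conclusion, not a derivation; nothing in your setup guarantees that such an $S$ exists, that the isoperimetric optimisation can be solved in closed form, or that the transcendental contributions do not cancel (and the appeal to Lindemann--Weierstrass/Baker only helps once one has an explicit nonzero expression in hand). In Stoll's proof this is a concrete computation for a specific (cubical-type) generating set, and carrying it out is most of the work. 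On the rational side you likewise defer the key step --- upgrading rationality of the limit $a_S$ to eventual quasi-polynomiality of $\beta_{H_D,S}(n)$, so that Proposition \ref{prop:EQP} applies --- to an argument ``in the spirit of Benson and Duchin--Shapiro.'' That sector/normal-form analysis in a class 2 nilpotent group of Heisenberg rank $r\geq2$, with the twisting data $\delta_i$, is precisely the technical heart of the result (for $H_1$ alone it took the full Duchin--Shapiro paper), and nothing in your outline rules out geodesics of unbounded combinatorial complexity or shows the count is a lattice-point count in finitely many rational polyhedra. As written, then, the proposal is a plausible road map that matches the known strategy, but neither the rational nor the transcendental generating set is actually produced and verified, so the theorem is not proved.
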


We fix some notation that we will use throughout the paper.
\begin{definition}\label{def:malcev}
Suppose that \(\Gamma\times H_D\) is torsion-free, and choose a basis \(\{z_1,\ldots,z_s\}\) for \(\Gamma\). Then the elements of \(\Gamma\times H_D\) are in bijection with words of the form \(z_1^{l_1}\cdots z_s^{l_s}a_1^{i_1}b_1^{j_1}\cdots a_r^{i_r}b_r^{j_r}c^k\) where \(l_1,\ldots,l_s,i_1,j_1,\ldots,i_r,j_r,k\in\Z\). This is known as the \emph{Mal'cev normal form}, or \emph{Mal'cev coordinates}, and a variation of it exists for all finitely generated nilpotent groups (see \cite{ClemMajZym}, or the \emph{basic commutators} of \cite{Hall}).
We will frequently represent elements of \(\Gamma\times H_D\) as \(\alpha c^k\) where \(\alpha\) is an element of the form \(z_1^{l_1}\cdots z_s^{l_s}a_1^{i_1}b_1^{j_1}\cdots a_r^{i_r}b_r^{j_r}\) and \(c=c_1=[a_1,b_1]\).
\end{definition}
For the remainder of the paper, we use the generating set \(\{z_1,\ldots,z_s,a_1,\ldots,b_r\}\) for \(\Gamma\times H_D\) and lengths of elements will be taken with respect to this generating set.

\begin{definition}
	For \(g\in\Gamma\times H_D\), we write \(\bar{g}\) for the image of \(g\) in the abelianisation \(\Ab(\Gamma\times H_D)\). With this notation, we have
	\[\Ab(\Gamma\times H_D)=\langle \bar{z}_1,\ldots,\bar{z}_s,\bar{a}_1,\bar{b}_1,\ldots,\bar{a}_r,\bar{b}_r\rangle\cong\Z^{2r+s}.\]
	The preimage of \(x\in\Ab(\Gamma\times H_D)\) under the abelianisation map is a coset of the commutator subgroup \(\langle c\rangle\). We define the \emph{canonical lift} of \(x\) to be the unique element of the preimage whose \(c\)-coordinate is zero (when expressed in Mal'cev normal form), and denote it \(\lift{x}\). Therefore if \(x=\bar{z}_1^{l_1}\cdots\bar{z}_s^{l_s}\bar{a}_1^{i_1}\bar{b}_1^{j_1}\cdots\bar{a}_r^{i_r}\bar{b}_r^{j_r}\) then \(\lift{x}=z_1^{l_1}\cdots z_s^{l_s}a_1^{i_1}b_1^{j_1}\cdots a_r^{i_r}b_r^{j_r}\in\Gamma\times H_D\).
	
	We will also write elements of \(\Ab(\Gamma\times H_D)\) as vectors in \(\Z^{2r+s}\) with respect to the basis \(\{\bar{z}_1,\ldots,\bar{z}_s,\bar{a}_1,\bar{b}_1,\ldots,\bar{a}_r,\bar{b}_r\}\), so that \(z_1^{l_1}\cdots z_r^{l_r}a_1^{i_1}\cdots b_r^{j_r}=(l_1,\ldots,l_s,i_1,\ldots,j_r)\).
\end{definition}

\subsection{Automorphisms of class 2 nilpotent groups}
We will need to understand the automorphisms of our family of nilpotent groups. First, note the following easy lemma.
\begin{lemma}\label{lem:inducedhom}
	Let $G$ be a group with a characteristic subgroup $H$. Then the natural homomorphism $G\rightarrow G/H$ induces a homomorphism $\theta\colon\Aut(G)\rightarrow\Aut(G/H)$.
\end{lemma}

\begin{definition}\label{def:omegaM}
	Let \(\Omega_r\) denote the \(2r\times 2r\) block matrix consisting of blocks of the form \(\begin{pmatrix}
		0 & 1\\ -1 & 0
	\end{pmatrix}\) on the diagonal with zeroes elsewhere. Let \(\Omega_{r,s}\) be the \((2r+s)\times(2r+s)\) matrix with \(\Omega_r\) in the bottom right corner and zeroes elsewhere.
\end{definition}
For example,
		\begin{equation*}
			\Omega_{2,1}= \begin{pmatrix}
				0 & 0 & 0 & 0 & 0 \\
				0 & 0 & 1 & 0 & 0 \\
				0 & -1 & 0 & 0 & 0 \\
				0 & 0 & 0 & 0 & 1 \\
				0 & 0 & 0 & -1 & 0 \\
			\end{pmatrix}.
		\end{equation*}
Note that the skew-symmetric bilinear form \(\Omega_{r,s}\) is precisely that given by taking commutators of pairs of elements of \(\mathrm{Ab}\left(\Gamma\times H_r\right)\) with respect to the basis \(\{\bar{z}_1,\ldots,\bar{z}_s,\bar{a}_1,\bar{b}_1,\ldots,\bar{a}_r,\bar{b}_r\}\), i.e. \([\cdot,\cdot]\colon \Z^{2r+s}\times\Z^{2r+s}\to\Z\). Let \(\alpha c^{k_1}, \beta c^{k_2}\in\Gamma\times H_r\). Then \(\overline{\alpha c^{k_1}}=\bar{\alpha}\) and \(\overline{\beta c^{k_2}}=\bar{\beta}\) are elements of \(\Z^{2r+s}\) and we have
\[[\alpha c^{k_1},\beta c^{k_2}] = [\alpha,\beta] = c^{\bar{\alpha}\Omega_{r,s}\bar{\beta}^T}.\]
\begin{definition}\label{def:M}
	Let \(\mathbf{M}<\GL_{2r+s}\Z\) be the group of matrices that either preserve or reverse the bilinear form given by \(\Omega_{r,s}\) (i.e. the group of matrices \(M\) such that \(M\Omega_{r,s}M^T=\varepsilon\Omega_{r,s}\) for \(\varepsilon\in\{1,-1\}\)). Note that if \(s=0\) then \(\mathbf{M}\cong\mathrm{Sp}(2r,\Z)\rtimes\Z/2\Z\).
\end{definition}

\begin{definition}
	Suppose that \(N=\Gamma\times H_D\) is torsion-free and let \(M\in\mathbf{M}\). Define a map \(\phi_M\colon N\to N\) as follows. For each \(1\leq i\leq s\), set \(\phi_M(z_i)=\lift{\left(\bar{z}_iM\right)}\), i.e. the canonical lift of the image of \(\overline{z}_i\) under \(M\). Similarly, set \(\phi_M(a_i)=\lift{\left(\bar{a}_iM\right)}\) and \(\phi_M(b_i)=\lift{\left(\bar{b}_iM\right)}\) for each \(1\leq i\leq r\). Then extend this map to an endomorphism of \(N\) in the usual way by setting \(\phi_M(x_1\cdots x_n) = \phi_M(x_1)\cdots\phi_M(x_n)\) for any word \(x_1\cdots x_n\) in the generators.
\end{definition}
\begin{lemma}
	The map \(\phi_M\) is a well-defined automorphism of \(N\).
\end{lemma}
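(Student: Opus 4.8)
The plan is to verify two things: first that $\phi_M$ is a well-defined endomorphism of $N$ (i.e. that it respects the defining relations of $N$), and second that it is bijective. For well-definedness, I would work with the presentation of $N = \Gamma \times H_D$ arising from the Mal'cev normal form: the generators are $z_1,\dots,z_s,a_1,b_1,\dots,a_r,b_r$ and the relations are the ones encoding (a) that $\Gamma$ is abelian, (b) that $c := [a_1,b_1]$ is central, and (c) the commutator relations $[a_i,b_i] = [a_1,b_1]^{\delta_{i-1}}$ (with $\delta_0 = 1$) and $[x,y] = 1$ for all other pairs of distinct generators $x,y$. The key observation, already set up in the excerpt, is that all these relations are governed by the skew-symmetric form $\Omega_{r,s}$: for generators $x,y$ we have $[\lift{\bar x},\lift{\bar y}] = c^{\bar x \Omega_{r,s} \bar y^T}$, and more generally $[\alpha c^{k_1},\beta c^{k_2}] = c^{\bar\alpha \Omega_{r,s}\bar\beta^T}$. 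So I would check that for any two generators $u, v$, the element $[\phi_M(u),\phi_M(v)] = c^{(\bar u M)\Omega_{r,s}(\bar v M)^T} = c^{\bar u (M\Omega_{r,s}M^T)\bar v^T} = c^{\varepsilon \,\bar u \Omega_{r,s} \bar v^T}$, using the defining property of $\mathbf{M}$. Then I would separately track where $c$ goes: since $c = [a_1,b_1]$, we get $\phi_M(c) = [\phi_M(a_1),\phi_M(b_1)] = c^{\varepsilon(\bar a_1 \Omega_{r,s}\bar b_1^T)} = c^{\varepsilon}$ (as $\bar a_1 \Omega_{r,s}\bar b_1^T = 1$), so $\phi_M$ sends $c$ to $c^{\pm 1}$ and in particular maps the derived subgroup $\langle c\rangle$ to itself. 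Comparing $[\phi_M(u),\phi_M(v)] = c^{\varepsilon\,\bar u\Omega_{r,s}\bar v^T}$ with $\phi_M$ applied to the relation $[u,v] = c^{\bar u \Omega_{r,s}\bar v^T}$, which is $\phi_M(c)^{\bar u\Omega_{r,s}\bar v^T} = c^{\varepsilon\,\bar u\Omega_{r,s}\bar v^T}$, shows the two sides agree; hence every defining relation is preserved and $\phi_M$ is a well-defined endomorphism.

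For surjectivity and injectivity, I would exploit the induced action on the abelianisation. By construction $\phi_M$ induces on $\Ab(N) \cong \Z^{2r+s}$ precisely the automorphism given by the matrix $M$, which is invertible over $\Z$ since $M \in \GL_{2r+s}\Z$. This already shows $\phi_M$ is surjective: the image generates $N$ modulo $N^{(1)} = \langle c\rangle$, and since $\phi_M(c) = c^{\pm 1}$ generates $\langle c\rangle$, the image of $\phi_M$ is all of $N$. For injectivity, I would run the standard argument for finitely generated nilpotent (here class $2$) groups: a surjective endomorphism of a finitely generated nilpotent group is an automorphism — one can cite this as a well-known fact, or give the quick direct proof here. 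The direct proof observes that $\phi_M$ restricts to a surjective, hence (being a surjection $\Z \to \Z$) injective, endomorphism of $\langle c\rangle = N^{(1)}$, and induces the isomorphism $M$ on $\Ab(N) = N/N^{(1)}$; a diagram chase (five-lemma style) on the short exact sequence $1 \to \langle c\rangle \to N \to \Ab(N) \to 1$ then forces $\phi_M$ to be injective, so $\phi_M \in \Aut(N)$.

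The only genuinely delicate point is the well-definedness: one must be careful that the canonical lifts $\lift{(\bar z_i M)}$, etc., are unambiguous (they are, by definition, the unique preimages with zero $c$-coordinate) and that when we compute $[\phi_M(u),\phi_M(v)]$ using the formula $[\alpha c^{k_1},\beta c^{k_2}] = c^{\bar\alpha\Omega_{r,s}\bar\beta^T}$, the $c$-exponents of the lifts genuinely do not enter — which is exactly the content of that formula, since the commutator in a class-$2$ group depends only on the images in the abelianisation. Once this is in hand, everything reduces to the single matrix identity $M\Omega_{r,s}M^T = \varepsilon\Omega_{r,s}$, and the rest is formal. I would also remark that the hypothesis that $N$ is torsion-free is used only to guarantee that $\Ab(N)$ is free abelian of rank $2r+s$, so that the Mal'cev coordinates and the matrix description of $\mathbf{M}$ make sense as stated.
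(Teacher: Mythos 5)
Your proof is correct, and its first half is essentially the paper's argument: well-definedness is checked on the relators via the identity $[\lift{\bar u},\lift{\bar v}]=c^{\bar u\Omega_{r,s}\bar v^T}$ together with $M\Omega_{r,s}M^T=\varepsilon\Omega_{r,s}$, exactly as in the paper (which spells out the sample relator $[z_i,a_j]$). Where you genuinely diverge is bijectivity. The paper's route is shorter and more explicit: since $\mathbf{M}$ is a group, $M^{-1}\in\mathbf{M}$, so $\phi_{M^{-1}}$ is already known to be an endomorphism by the same well-definedness check, and one verifies on generators, using $\overline{\lift{x}}=x$, that $\phi_{M^{-1}}\circ\phi_M$ and $\phi_M\circ\phi_{M^{-1}}$ fix each generator, giving an explicit two-sided inverse. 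You instead prove surjectivity (the image hits all of $\Ab(N)$ because $M\in\GL_{2r+s}(\Z)$, and contains $\langle c\rangle$ because $\phi_M(c)=[\phi_M(a_1),\phi_M(b_1)]=c^{\pm1}$ -- a correct, non-circular derivation of the action on the centre) and then injectivity either by Hopficity of finitely generated nilpotent groups or by the direct chase on $1\to\langle c\rangle\to N\to\Ab(N)\to1$: the kernel dies in $\Ab(N)$ since $M$ is injective, hence lies in $\langle c\rangle$, on which $\phi_M$ acts by $c\mapsto c^{\pm1}$. Both routes are valid; the paper's buys brevity and avoids citing Hopficity, while yours makes transparent exactly which structural facts (invertibility of $M$ over $\Z$ and $\phi_M(c)=c^{\pm1}$) force bijectivity, at the cost of either an external citation or the short diagram chase. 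Your closing remark about the role of torsion-freeness matches the paper's setup.
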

\begin{proof}
	First note that for any \(x\in \Z^{2r+s}\) we have \(\overline{\lift{x}}=x\).
	It is easily checked that the relators of \(N\) are mapped to the identity and therefore \(\phi_M\) defines an endomorphism of \(N\). For example, for the relator \([z_i,a_j]\), we have
	\[\left[\lift{(\bar{z}_iM)}, \lift{(\bar{a}_jM)}\right]= c^{\bar{z}_iM\Omega_{r,s}(\bar{a}_jM)^T} = c^{\varepsilon\bar{z}_i\Omega_{r,s}\bar{a}_j^T}=1.\]
	
	Furthermore, for any \(M\), \(\phi_{M^{-1}}\) is the inverse of \(\phi_M\), which is therefore an automorphism. It is sufficient to check this on the generators, for example \[\phi_{M^{-1}}\circ\phi_M(z_i) =  \lift{\left(\overline{\lift{(\bar{z}_iM)}}M^{-1}\right)} = \lift{\left(\bar{z}_i MM^{-1}\right)} = \lift{\bar{z}_i} = z_i.\]
\end{proof}

\begin{proposition}\label{prop:autoH}
	Suppose that \(N=\Gamma\times H_D\) is torsion-free. Then the natural homomorphism \(N\to N/N^{(1)}\) induces (via Lemma \ref{lem:inducedhom}) an epimorphism \(\theta\colon\Aut(N)\to\mathbf{M}\).
\end{proposition}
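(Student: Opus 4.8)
The plan is to show that $\theta$ is well-defined (i.e.\ lands in $\mathbf{M}$) and surjective; injectivity is not claimed, so there is nothing to do there. First I would verify that $\theta$ lands in $\mathbf{M}$. Given $\psi\in\Aut(N)$, Lemma \ref{lem:inducedhom} applied to the characteristic subgroup $N^{(1)}$ (characteristic since it is the derived subgroup) produces $\theta(\psi)\in\Aut(N/N^{(1)})=\Aut(\Z^{2r+s})=\GL_{2r+s}\Z$. The point is that this induced automorphism must respect the commutator pairing $\Z^{2r+s}\times\Z^{2r+s}\to N^{(1)}\cong\Z$ given by $\Omega_{r,s}$, up to the automorphism $\psi$ induces on $N^{(1)}\cong\Z$, which is multiplication by $\pm1$. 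Concretely, writing $M=\theta(\psi)$ and recalling $[\alpha,\beta]=c^{\bar\alpha\Omega_{r,s}\bar\beta^T}$, we have $\psi([\alpha,\beta])=[\psi(\alpha),\psi(\beta)]$, and since $\psi(c)=c^{\varepsilon}$ for some $\varepsilon\in\{1,-1\}$ this gives $\varepsilon\,\bar\alpha\Omega_{r,s}\bar\beta^T = (\bar\alpha M)\Omega_{r,s}(\bar\beta M)^T = \bar\alpha (M\Omega_{r,s}M^T)\bar\beta^T$ for all $\bar\alpha,\bar\beta\in\Z^{2r+s}$, hence $M\Omega_{r,s}M^T=\varepsilon\Omega_{r,s}$, i.e.\ $M\in\mathbf{M}$.

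Next I would prove surjectivity. Given $M\in\mathbf{M}$, the construction preceding the proposition already supplies an automorphism $\phi_M\in\Aut(N)$ with the property that $\phi_M(z_i)=\lift{(\bar z_i M)}$, etc. Since $\overline{\lift{x}}=x$ for all $x\in\Z^{2r+s}$, the induced map $\theta(\phi_M)$ on $N/N^{(1)}\cong\Z^{2r+s}$ sends $\bar z_i$ to $\bar z_i M$ and similarly for $\bar a_i,\bar b_i$; since these form a basis, $\theta(\phi_M)=M$. Hence every $M\in\mathbf{M}$ is in the image of $\theta$, proving that $\theta$ is an epimorphism onto $\mathbf{M}$.

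The only real subtlety — and the step I would be most careful about — is the first one: checking that the induced matrix $M$ actually lies in $\mathbf{M}$ rather than just in $\GL_{2r+s}\Z$. This hinges on two facts: that $N^{(1)}$ is characteristic (immediate, as the derived subgroup) so that $\psi$ restricts to an automorphism of $N^{(1)}\cong\Z$, which is necessarily $c\mapsto c^{\pm1}$; and that the bilinear form $\Omega_{r,s}$ is genuinely the commutator form on $\Ab(N)$, which is exactly the identity $[\alpha c^{k_1},\beta c^{k_2}]=c^{\bar\alpha\Omega_{r,s}\bar\beta^T}$ recorded in the excerpt. With those in hand the computation $M\Omega_{r,s}M^T=\varepsilon\Omega_{r,s}$ is a one-line manipulation, so there is no serious obstacle; the argument is essentially bookkeeping once the characteristic-subgroup observation is made. (One should note the statement does not assert $\theta$ is an isomorphism — indeed its kernel contains inner automorphisms and more generally automorphisms trivial on $\Ab(N)$ — so no further work is needed.)
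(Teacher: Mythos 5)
Your proof is correct and follows essentially the same route as the paper's: the containment $\theta(\Aut(N))\subseteq\mathbf{M}$ via the restriction of an automorphism to the characteristic subgroup $N^{(1)}\cong\Z$ (giving $\varepsilon=\pm1$) together with the commutator-form identity $[\alpha,\beta]=c^{\bar\alpha\Omega_{r,s}\bar\beta^T}$, and surjectivity by observing $\theta(\phi_M)=M$ for the explicitly constructed automorphisms $\phi_M$. No gaps.
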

\begin{proof}
	We first show that the image of the induced homomorphism \(\theta\) is contained in \(\mathbf{M}\). Let \(f\in\Aut(N)\) and write \(\theta(f)=M\in\GL_{2r+s}(\Z)\). Since the commutator subgroup \(N^{(1)}\) is characteristic, \(f\) restricts to an automorphism of \(N^{(1)}=\langle c\rangle\cong\Z\), so we have \(f\colon c\mapsto c^\varepsilon\) where \(\varepsilon\in\{-1,1\}\). 
	
	Then, for any \(\alpha c^{k_1},\beta c^{k_2}\in N\), we have 
	\begin{align*}
		f\left([\alpha c^{k_1},\beta c^{k_2}]\right) &= f\left([\alpha,\beta]\right) =  [\alpha,\beta]^{\varepsilon}
	\end{align*}
	and hence \(\left[f(\alpha),f(\beta)\right] = [\alpha,\beta]^{\varepsilon}\).
	Therefore
	\begin{align*}
		c^{\bar{\alpha}M\Omega_{2r+s}M^T\bar{\beta}^T} = c^{\varepsilon\bar{\alpha}\Omega_{2r+s}\bar{\beta}^T}
	\end{align*}
	for all \(\bar{\alpha},\bar{\beta}\in \Ab(N)\) and hence \(M\Omega_{2r+s}M^T = \varepsilon\Omega_{2r+s}\), i.e. \(M\in\mathbf{M}\) as claimed.
		
	To see that \(\mathbf{M}\) is contained in the image of \(\Aut(N)\) we note that for each \(M\in\mathbf{M}\), \(\theta(\phi_M)=M\).
\end{proof}
\begin{remark}
Although we do not need it for the arguments that follow, we note that in the special case of \(N=H_r\), we can use a straightforward generalisation of an argument of Osipov \cite{Osipov} to extend Proposition \ref{prop:autoH} to give the following short exact sequence, where \(\Inn(H_r)=\Z^{2r}\).
\[1\to \Z^{2r} \to \Aut(H_r) \to \Sp_{2r}(\Z)\rtimes\Z/2\Z \to1\]
\end{remark}
In Section \ref{sec:virtconjgrowth} we will need a more explicit description of the automorphisms of $N$.
\begin{proposition}\label{prop:autoHexplicit}
	Suppose \(N=\Gamma\times H_D\) is torsion-free, and fix $f\in\Aut(N)$. Write $\theta(f)=M\in\mathbf{M}$ for the image of \(f\) as above. Then there exists \(\varepsilon\in\{1,-1\}\) and a polynomial function $\gamma\colon\R^{2r+s}\to\R$ of degree \(2\), with $\gamma(0,\ldots,0)=0$, restricting to a function \(\Z^{2r+s}\to\Z\), such that for each element $z_1^{l_1}\cdots z_s^{l_s}a_1^{i_1}b_1^{j_1}\cdots a_r^{i_r}b_r^{j_r}c^k\in N$ we have
	\begin{align*}
		f\left(z_1^{l_1}\cdots z_s^{l_s}a_1^{i_1}b_1^{j_1}\cdots a_r^{i_r}b_r^{j_r}c^k\right) &= 
		\lift{\left(\overline{z_1^{l_1}\cdots z_s^{l_s}a_1^{i_1}b_1^{j_1}\cdots 	a_r^{i_r}b_r^{j_r}}M\right)}
		c^{\varepsilon k + \gamma(l_1,\ldots,l_s,i_1,j_1,\ldots,i_r,j_r)}\\
		&= \lift{\left((l_1,\ldots,l_s,i_1,j_1,\ldots,i_r,j_r)M\right)}c^{\varepsilon k + \gamma(l_1,\ldots,l_s,i_1,j_1,\ldots,i_r,j_r)},
	\end{align*}
	with \(\varepsilon\) and \(\gamma\) depending only on \(f\).
\end{proposition}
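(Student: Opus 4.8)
The plan is to reconstruct the automorphism $f$ from its linearisation $M = \theta(f)$ and track the ``commutator-coordinate'' error term. First I would observe that the statement in Proposition~\ref{prop:autoH} already gives $\theta(f) = M \in \mathbf{M}$ and that $f$ restricts to $c \mapsto c^{\varepsilon}$ on $N^{(1)} = \langle c\rangle$ for some $\varepsilon \in \{1,-1\}$. Since the abelianisation of $f$ is $M$, for each generator $x$ among $z_1,\dots,z_s,a_1,b_1,\dots,a_r,b_r$ the element $f(x)$ lies in the coset of $\langle c\rangle$ determined by $\bar{x}M$; hence $f(x) = \lift{(\bar{x}M)} c^{\,\mu(x)}$ for a uniquely determined integer $\mu(x)$, which depends only on $f$. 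This handles the case $k = 0$ on generators, and I then need to propagate it to arbitrary Mal'cev words.

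The key computation is the following. Given an element $g = z_1^{l_1}\cdots z_s^{l_s} a_1^{i_1} b_1^{j_1}\cdots a_r^{i_r} b_r^{j_r} c^k$, apply $f$ letter by letter. Because $c$ is central and $f(c) = c^{\varepsilon}$, the $c^k$ factor contributes $c^{\varepsilon k}$. Each generator power contributes its $\mu$-value times the exponent (these are central and additive), giving a linear term $\sum_i l_i \mu(z_i) + \sum_i (i_i \mu(a_i) + j_i \mu(b_i))$. The remaining contribution comes from collecting the $\lift{(\bar{x}M)}$ pieces into Mal'cev normal form: each time one must move a generator past an earlier one, a commutator $c^{\pm(\cdots)}$ is picked up, and by the bilinear-form identity $[\alpha,\beta] = c^{\bar\alpha\Omega_{r,s}\bar\beta^T}$ this correction is a product of terms each \emph{bilinear} in the exponent vector $(l_1,\dots,j_r)$. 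Summing all these, the total discrepancy between $f(g)$ and $\lift{\big((l_1,\dots,j_r)M\big)} c^{\varepsilon k}$ is $c^{\gamma(l_1,\dots,j_r)}$ where $\gamma$ is a sum of a linear form (the $\mu$-contributions) and a quadratic form (the collection corrections), hence a polynomial of degree at most $2$ with $\gamma(0,\dots,0) = 0$; it manifestly takes integer values on $\Z^{2r+s}$ and depends only on $f$ (via $M$, $\varepsilon$, and the $\mu(x)$). I would also note $\overline{\lift{\big((l_1,\dots,j_r)M\big)}} = (l_1,\dots,j_r)M$, so the abelianised coordinates are exactly as claimed and only the $c$-exponent carries the $\gamma$ correction.

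The main obstacle is bookkeeping the collection process: one has to be careful that rewriting $f(g)$ into Mal'cev normal form introduces \emph{only} bilinear corrections in the $c$-exponent and does not disturb the abelianised part, and that the quantity $f(g)$ is genuinely independent of the order in which one collects (which it is, since $N$ is class~$2$ so all commutators are central and the group is defined by the collection relations). A clean way to organise this is to prove the identity first for $N = H_r$ or even $H_1$ by a direct collection argument, using the fact that in a class~$2$ group any product of generator powers equals its ``abelianised'' Mal'cev word times $c$ raised to a fixed bilinear function of the exponents, and then observe that the central product $\Gamma \times H_D$ inherits exactly the same structure (the amalgamation only rescales $c$ by the $\delta_i$, which is absorbed into $\gamma$). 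I do not expect any genuine difficulty beyond this combinatorial verification; the conceptual content is entirely contained in Proposition~\ref{prop:autoH} together with the observation that a class~$2$ nilpotent group's normal-form rewriting is governed by a bilinear cocycle.
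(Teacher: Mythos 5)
Your argument is correct and is essentially the paper's proof: the paper organises the same data by factoring $f=\kappa\circ\phi_M$ with $\kappa\in\ker\theta$ supplying the per-generator $c$-shifts and $\phi_M$ the transversal automorphism, whereas you record those shifts $\mu(x)$ directly on the generators and then carry out the same class-2 collection, arriving at the same linear-plus-quadratic correction to the $c$-exponent. In both versions the substance is identical: the abelianised action $M$ from Proposition \ref{prop:autoH}, the sign $\varepsilon$ on the characteristic subgroup $\langle c\rangle$, and the fact that rewriting into Mal'cev normal form only contributes a degree-$2$, integer-valued polynomial vanishing at the origin.
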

\begin{proof}
Proposition \ref{prop:autoH} gives us a short exact sequence \(1\to K\to \Aut(N) \xrightarrow{\theta} \mathbf{M}\to1\), for some normal subgroup \(K\lhd \Aut(N)\) (which contains but may not be equal to \(\Inn(N)\)). The automorphisms \( \{\phi_M\in\Aut(N) \mid M\in\mathbf{M}\}\) defined above form a transversal for \(\Aut(N)/K\), and hence \(f=\kappa\circ \phi_M\) for some \(\kappa\in K\) and \(M\in\mathbf{M}\).

Since \(\kappa\in K\), \(\kappa\) maps to the identity in \(\mathbf{M}\). So there exist integers \(k_i\) such that
\begin{align*}
	\kappa\colon z_1 & \mapsto z_1c^{k_1} \\
	&\vdots \\
	z_s & \mapsto z_s c^{k_s} \\
	a_1 & \mapsto a_1 c^{k_{s+1}} \\
	b_1 & \mapsto b_1 c^{k_{s+2}} \\
	&\vdots \\
	b_r & \mapsto b_r c^{k_{s+2r}}.
\end{align*}
Therefore applying \(\kappa\) to some \(z_1^{l_1}\cdots z_s^{l_s}a_1^{i_1}b_1^{j_1}\cdots a_r^{i_r}b_r^{j_r}c^k\in N\) fixes the powers of the generators \(z_1,\ldots,b_r\) and adds \({p(l_1,\ldots,j_r)}\) to the power of \(c\), where \(p\) is a linear function determined by the integers \(k_1,\ldots,k_{s+2r}\).

On the other hand, \(\phi_M\in\Aut(N)\) takes each generator to a linear combination of the generators \(z_1,\ldots,b_r\), determined by the matrix \(M\). We also have \(\phi_M(c)=c^{\varepsilon}\) where \(\epsilon\in\{1,-1\}\). Rearranging into the normal form then results in an adjustment to the power of \(c\) consisting of a sum of terms of the form \(i_\lambda j_\lambda\) with coefficients determined by \(M\). Thus the image of \(z_1^{l_1}\cdots z_s^{l_s}a_1^{i_1}b_1^{j_1}\cdots a_r^{i_r}b_r^{j_r}c^k\) under \(\phi_M\) may be expressed as 
\[\lift{\left(\overline{z_1^{l_1}\cdots z_s^{l_s}a_1^{i_1}b_1^{j_1}\cdots a_r^{i_r}b_r^{j_r}}M\right)}c^{\varepsilon k+q(l_1,\ldots,j_r)} = \lift{\left((l_1,\ldots,j_r)M\right)}c^{\varepsilon k+q(l_1,\ldots,j_r)}\]
where \(q\) is a degree \(2\) polynomial.

Letting \(x=z_1^{l_1}\cdots z_s^{l_s}a_1^{i_1}b_1^{j_1}\cdots a_r^{i_r}b_r^{j_r}c^k\), we have
\begin{align*}
	f(x) &= \kappa\circ\phi_M(z_1^{l_1}\cdots z_s^{l_s}a_1^{i_1}b_1^{j_1}\cdots a_r^{i_r}b_r^{j_r}c^k)\\
	&= \kappa\left(\lift{\big((l_1,\ldots,l_s,i_1,j_1,\ldots,i_r,j_r)M\big)}c^{\varepsilon k+q(l_1,\ldots,j_r)}\right)\\
	&= \lift{\big((l_1,\ldots,l_s,i_1,j_1,\ldots,i_r,j_r)M\big)}c^{\varepsilon k+\gamma(l_1,\ldots,j_r)}
\end{align*}
where \(\gamma(l_1,\ldots,j_r)=q(l_1,\ldots,j_r)+p\left( (l_1,\ldots,j_r)M\right)\) is a degree 2 polynomial with coefficients determined by \(f\).
\end{proof}


\section{Conjugacy Growth of Higher Heisenberg Groups}\label{sec:conjugacygrowth}
This section is dedicated to proving Theorem \ref{thm:Hconjugacy}. First, we show that the parameter \(D\) does not affect the conjugacy growth of the group \(H_D\).

\begin{proposition}
	Let $D=(\delta_1,\delta_2,\ldots,\delta_{r-1})$ with each $\delta_i\mid\delta_{i+1}$. Then the conjugacy growth of $H_D$ is equivalent to that of $H_r$, that is, $c_{H_D}\sim c_{H_r}$.
\end{proposition}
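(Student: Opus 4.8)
The plan is to avoid computing $c_{H_D}(n)$ at all, and instead show that $H_D$ and $H_r$ are \emph{commensurable} by exhibiting, in each direction, an explicit finite-index subgroup of one that is isomorphic to the other. Since conjugacy growth (up to $\sim$) is an isomorphism invariant and independent of the finite generating set by Proposition \ref{prop:gensetequiv}, Lemma \ref{lem:BC3.1(2)} applied twice will then give $c_{H_D}\preccurlyeq c_{H_r}$ and $c_{H_r}\preccurlyeq c_{H_D}$, hence $c_{H_D}\sim c_{H_r}$. Throughout I set $\delta_0=1$, so that (by the Remark following Lemma \ref{lem:Stollgrps}) the generating commutators of $H_D=(\cdots)\otimes_{\delta_{r-1}}(H_1,c_r)$ satisfy $c_i=c^{\delta_{i-1}}$ for $1\le i\le r$ with $c=c_1$, while in $H_r=H_I$ every $[a_i,b_i]$ equals the single central generator $c$.

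First I would embed $H_D$ in $H_r$: inside $H_r$ take the subgroup $H'$ generated by $a_1,b_1,a_2^{\delta_1},b_2,\dots,a_r^{\delta_{r-1}},b_r$. Sending the $i$-th Heisenberg pair of $H_D$ to $(a_i^{\delta_{i-1}},b_i)$ respects all defining relations — the only point to check is $[a_i^{\delta_{i-1}},b_i]=c^{\delta_{i-1}}=c_i$ — so it gives a surjection $H_D\to H'$; on abelianisations it is the diagonal map $\Z^{2r}\to\Z^{2r}$ scaling the $a_i$-coordinate by $\delta_{i-1}$, which is injective, so the kernel lies in $H_D^{(1)}=\langle c\rangle$, where the map is plainly injective (it sends $c$ to $c$). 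Thus $H'\cong H_D$, and since $c=[a_1,b_1]\in H'$ we have $H_r^{(1)}=\langle c\rangle\subseteq H'$, whence the index of $H'$ in $H_r$ equals that of its image in $\Ab(H_r)$, namely $\delta_1\cdots\delta_{r-1}<\infty$. Symmetrically I would embed $H_r$ in $H_D$: inside $H_D$ take $H''$ generated by $a_1^{\delta_{r-1}},b_1,a_2^{\delta_{r-1}/\delta_1},b_2,\dots,a_r^{\delta_{r-1}/\delta_{r-1}},b_r$ (each exponent is a positive integer by the divisibility hypothesis). Now $[a_i^{\delta_{r-1}/\delta_{i-1}},b_i]=c_i^{\delta_{r-1}/\delta_{i-1}}=c^{\delta_{r-1}}$ for every $i$, so all diagonal commutators agree and the analogous map $H_r\to H''$ is an isomorphism (again injective because it is injective on abelianisations and on $\langle c\rangle$, where it sends $c\mapsto c^{\delta_{r-1}}$); and $H''$ has finite index in $H_D$ because $H''\cap H_D^{(1)}=\langle c^{\delta_{r-1}}\rangle$ has index $\delta_{r-1}$ in $H_D^{(1)}$ while the image of $H''$ in $\Ab(H_D)$ has finite index $\prod_{i=1}^r(\delta_{r-1}/\delta_{i-1})$.

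With both subgroups in hand the conclusion is immediate: $c_{H_D}=c_{H'}\preccurlyeq c_{H_r}$ by Lemma \ref{lem:BC3.1(2)} applied to $H'\le H_r$, and $c_{H_r}=c_{H''}\preccurlyeq c_{H_D}$ applied to $H''\le H_D$, so $c_{H_D}\sim c_{H_r}$. I do not expect a genuine obstacle here — the content is the two isomorphism-and-finite-index verifications, whose only delicate point is confirming that the natural surjections onto $H'$ and $H''$ are injective (this is exactly where torsion-freeness of $H_D$, i.e.\ injectivity on the infinite cyclic derived subgroup, enters). A more computational alternative would be to read off the conjugacy-class description $[\alpha c^k]=\{\alpha c^{k+m\gcd(i_1,j_1,\delta_1 i_2,\delta_1 j_2,\dots,\delta_{r-1}i_r,\delta_{r-1}j_r)}\mid m\in\Z\}$ and check directly that the leading asymptotics of the count do not depend on $D$, but this would essentially duplicate the word-length estimates of this section, so the commensurability argument is preferable.
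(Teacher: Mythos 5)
Your proposal is correct and is essentially the paper's own argument: the paper likewise establishes mutual commensurability by exhibiting a copy of $H_r$ of finite index inside $H_D$ (your $H''$, generated by $a_i^{\delta_{r-1}/\delta_{i-1}},b_i$) and an embedding $a_i\mapsto d_i^{\delta_{i-1}}$, $b_i\mapsto e_i$ of $H_D$ onto a finite-index subgroup of $H_r$ (your $H'$), then applies Lemma \ref{lem:BC3.1(2)} in both directions. The only cosmetic difference is that the paper verifies injectivity and finite index via normal forms and explicit coset representatives, whereas you argue through the abelianisation and the derived subgroup, which is equally valid.
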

\begin{proof}
	We will show that there exist groups $\Gamma_1$ and $\Gamma_2$, both isomorphic to $H_r$, such that $\Gamma_1\leq H_D\leq\Gamma_2$, with $[H_D\colon\Gamma_1]$ and $[\Gamma_2\colon H_D]$ both finite. Then Lemma \ref{lem:BC3.1(2)} will give the result.
	
	From the definition, $H_D$ is generated by elements $a_1,b_2,\ldots,a_r,b_r$ with $[a_i,b_i]=c_i=c^{\delta_{i-1}}$ for each $i$ (and $[a_1,b_1]=c$). Let $\gamma_i=\frac{\delta_{r-1}}{\delta_{i-1}}$ for $i>1$, and $\gamma_1=\delta_{r-1}$, and define the subgroup
	\[\Gamma_1=\langle a_1^{\gamma_1},b_1,a_2^{\gamma_2},b_2,\ldots,a_r^{\gamma_r},b_r\rangle\leq H_D.\]
	For any \(i\), we have $[a_i^{\gamma_i},b_i]=c_i^{\gamma_i}=c^{\delta_{i-1}\cdot\gamma_i}=c^{\delta_{r-1}}$, and so $\Gamma_1$ is isomorphic to $H_r$.
	For an element \(a_1^{i_1}b_1^{j_1}a_2^{i_2}b_2^{j_2}\cdots a_r^{i_r}b_r^{j_r}c^k\in H_D\) we have 
	\[a_1^{i_1}b_1^{j_1}a_2^{i_2}b_2^{j_2}\cdots a_r^{i_r}b_r^{j_r}c^k \in a_1^{m_1}a_2^{m_2}\cdots a_r^{m_r}c^n\Gamma_1\]
	where each \(m_\lambda=i_\lambda\bmod\gamma_\lambda\), and \(n=k\bmod\delta_{r-1}\).
	Therefore the following is a set of representatives for the cosets \(H_D/\Gamma_1\):
	\[\{a_1^{m_1}a_2^{m_2}\cdots a_r^{m_r}c^k\mid 0\leq m_\lambda<\gamma_\lambda, 0\leq k<\delta_{r-1}\},\]
	and so \([H_D\colon\Gamma_1]=\gamma_1\gamma_2\cdots\gamma_r\delta_{r-1}<\infty\) as required.
	
	Now let $\Gamma_2\cong H_r$ be generated by $\{d_1,e_1,d_2,e_2,\ldots,d_r,e_r\}$ and denote the commutator by $f=[d_i,e_i]$. Define a map $\phi\colon H_D\to\Gamma_2$ by its action on the generators:
	\begin{align*}
	 a_i&\mapsto d_i^{\delta_{i-1}} \\
	 b_i&\mapsto e_i.
	\end{align*}
	It is easily checked that the relators of $H_D$ are sent to the identity by $\phi$, and thus it is a well-defined homomorphism. Furthermore, if $\phi(a_1^{I_1}b_1^{J_1}a_2^{I_2}b_2^{J_2}\cdots a_r^{I_r}b_r^{J_r}c^K)=1$ then $d_1^{I_1}e_1^{J_1}d_2^{\delta_1I_2}e_2^{\delta_1J_2}\cdots d_r^{\delta_{r-1}I_r}e_r^{\delta_{r-1}J_r}f^K=1$ and so $I_1=J_1=\cdots=J_r=K=0$. Thus $\phi$ is a monomorphism. 
	Similarly to the previous argument, a set of representatives for the cosets \(\Gamma_2/\phi(H_D)\) is
	\[\{d_1^{l_1}\cdots d_r^{l_r}\mid 0\leq n_\lambda<\delta_{\lambda-1}\},\]
	and so \([\Gamma_2\colon\phi(H_D)]=\delta_1\delta_2\cdots\delta_{r-1}\).
\end{proof}

\begin{definition}
	If $\bm{x}=(x_1,x_2,\ldots,x_n)$ is any tuple of integers, we will write
	\[g(\bm{x}) = \gcd(x_1,x_2,\ldots,x_n)\]
	for the greatest common divisor of the entries of \(\bm{x}\).
\end{definition}

The next two results describe the structure of the conjugacy classes of \(H_r\).
\begin{lemma}\label{lem:conjstructure}
	Let \(\alpha c^k\) be an element of \(H_r\) in Mal'cev normal form, so that \(\alpha=a_1^{i_1}b_1^{j_1}a_2^{i_2}b_2^{j_2}\cdots a_r^{i_r}b_r^{j_r}\) and \(\overline{\alpha c^k}=\bar{\alpha}\in\Z^{2r}\).
	Then the conjugacy class represented by $\alpha c^k\in H_r$ is either a singleton, or a coset of a cyclic subgroup:
	\begin{align}\label{eq:conjclasses}
		[\alpha c^k]=\begin{cases}
			\{c^k\} & \text{if }\alpha\text{ is the identity}\\
			\alpha c^k\langle c^{g(\bar{\alpha})}\rangle & \text{otherwise.}
		\end{cases}
	\end{align}
\end{lemma}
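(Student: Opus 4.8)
The plan is to use the fact that $H_r$ is nilpotent of class $2$ to turn conjugation into the evaluation of a bilinear form. Fix $g=\alpha c^k$ with $\alpha=a_1^{i_1}b_1^{j_1}\cdots a_r^{i_r}b_r^{j_r}$. For any $x\in H_r$, since $[x,g]$ lies in the centre we have $xgx^{-1}=g[x,g]$, and since $c^k$ is central, $[x,g]=[x,\alpha c^k]=[x,\alpha]$. Hence the conjugacy class is
\[ [\alpha c^k]=\{\,\alpha c^k\,[x,\alpha]\mid x\in H_r\,\}, \]
so everything reduces to identifying the subset $\{[x,\alpha]\mid x\in H_r\}$ of $\langle c\rangle$.

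First I would note that, again using class $2$, the map $x\mapsto[x,\alpha]$ is a group homomorphism $H_r\to\langle c\rangle\cong\Z$ (because $[xy,\alpha]=[x,\alpha][y,\alpha]$ when the commutators are central), so its image is a cyclic subgroup $\langle c^m\rangle$ and only the integer $m$ must be pinned down. Using the commutator pairing recorded before Definition \ref{def:M}, namely $[x,\alpha]=c^{\bar x\,\Omega_r\,\bar\alpha^T}$, and the fact that $\bar x$ ranges over all of $\Ab(H_r)=\Z^{2r}$ as $x$ ranges over $H_r$, the image of $x\mapsto[x,\alpha]$ is $\{c^t\mid t\in\Lambda\}$, where $\Lambda=\{\bar x\,\Omega_r\,\bar\alpha^T\mid \bar x\in\Z^{2r}\}$ is the image of a $\Z$-linear functional on $\Z^{2r}$ and hence equals $d\Z$ for $d$ the greatest common divisor of the entries of the vector $\Omega_r\bar\alpha^T$. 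Since $\Omega_r$ is the block-diagonal matrix of Definition \ref{def:omegaM}, the entries of $\Omega_r\bar\alpha^T$ are, up to sign and reordering, exactly $i_1,j_1,\ldots,i_r,j_r$, so $d=g(\bar\alpha)$. Therefore $\{[x,\alpha]\mid x\in H_r\}=\langle c^{g(\bar\alpha)}\rangle$.

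Putting this together: if $\alpha$ is trivial then $\bar\alpha=0$ and the class is the singleton $\{c^k\}$; otherwise $g(\bar\alpha)\in\N_+$ and $[\alpha c^k]=\alpha c^k\langle c^{g(\bar\alpha)}\rangle$, a coset of an infinite cyclic subgroup, exactly as claimed. To confirm this coset is the entire class and not just contained in it, I would observe that conjugate elements have equal image in $\Ab(H_r)$, so every conjugate of $\alpha c^k$ has abelianisation $\bar\alpha$ and thus Mal'cev normal form $\alpha c^{k'}$ for some $k'\in\Z$; the computation above determines precisely which exponents $k'$ arise. The only step with genuine content is the linear-algebra bookkeeping in the middle paragraph --- identifying the image of $\bar x\mapsto\bar x\,\Omega_r\,\bar\alpha^T$ with $g(\bar\alpha)\Z$ --- and this is routine given the explicit shape of $\Omega_r$; the rest is immediate from class-$2$ nilpotency.
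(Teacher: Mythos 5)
Your proposal is correct and takes essentially the same route as the paper: where the paper conjugates $\alpha c^k$ directly by the generators $a_t,b_t$ and observes that the achievable shifts of the $c$-exponent are exactly the integer combinations of the exponents $i_t,j_t$ (hence $g(\bar\alpha)\Z$ by B\'ezout), you package the identical computation via the homomorphism $x\mapsto[x,\alpha]$ and the pairing $\Omega_r$. The mathematical content is the same, so no further comparison is needed.
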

\begin{proof}
	First note that $c^k$ is central, so its conjugacy class is $[c^k]=\{c^k\}$. Now consider non-identity $\alpha$. Since $[a_t,b_t]=c$ for each $1\leq t\leq r$, we have
	\begin{align*}
		a_t\alpha c^k a_t^{-1} = \alpha c^{k+i_t}\text{ and }b_t\alpha c^k b_t^{-1} = \alpha c^{k-j_t}.
	\end{align*}
	So we can express the conjugacy class of $\alpha c^k$ as follows:
	\begin{align*}
		[\alpha c^k] &= \left\{\alpha c^{k+ \sum_{t=1}^r(l_{t1}j_t - l_{t2}i_t)} \mid l_{t1},l_{t2}\in\Z\right\} \\
		&= \alpha c^k\left\langle c^{\gcd(j_1,-i_1,j_2,-i_2,\ldots,j_r,-i_r)}\right\rangle \\
		&= \alpha c^k\langle c^{g(\bar{\alpha})}\rangle.
	\end{align*}
\end{proof}
\begin{lemma}\label{lem:conjlength}
		Let $\alpha=a_1^{i_1}b_1^{j_1}\cdots a_r^{i_r}b_r^{j_r}\in H_r$ be a non-trivial element in Mal'cev normal form. Then the length of the conjugacy class $[\alpha c^k]$ lies in the range $[|\alpha|,|\alpha|+2]$, where \(|\alpha|=\sum_\lambda(|i_\lambda|+|j_\lambda|)\) is the word-length of \(\alpha\).
\end{lemma}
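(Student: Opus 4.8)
The plan is to establish the two bounds \(|\alpha|\le |[\alpha c^k]|\le|\alpha|+2\) separately. The lower bound is the easy half: by Lemma \ref{lem:conjstructure} every element of \([\alpha c^k]\) has the form \(\alpha c^{k+mg(\bar\alpha)}\), hence the same image \(\bar\alpha\in\Z^{2r}\) in \(\Ab(H_r)\). The abelianisation map carries each generator \(a_t,b_t\) to a basis vector and \(c\) to \(0\), so it cannot increase word length; therefore every conjugate of \(\alpha c^k\) has length at least that of \(\bar\alpha=(i_1,j_1,\ldots,i_r,j_r)\) in \(\Z^{2r}\), which is exactly \(\sum_\lambda(|i_\lambda|+|j_\lambda|)=|\alpha|\). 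Thus \(|[\alpha c^k]|\ge|\alpha|\).

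For the upper bound I would exhibit one sufficiently short conjugate. Write \(g=g(\bar\alpha)=\gcd(i_1,j_1,\ldots,i_r,j_r)\), so that \([\alpha c^k]=\{\alpha c^{k+mg}\mid m\in\Z\}\) by Lemma \ref{lem:conjstructure}. If \(g\mid k\), the class already contains \(\alpha\) itself, of length \(|\alpha|\), and we are done; otherwise let \(r_0\) be the residue of \(k\) modulo \(g\) with \(0<r_0<g\), and aim to write \(\alpha c^{r_0}\) efficiently. Since \(\alpha\ne1\), some exponent is nonzero, say \(i_{t_0}\ne0\) (if instead all \(i_\lambda=0\) and some \(j_{t_0}\ne0\), the argument is the same with \(a_{t_0}\) and \(b_{t_0}\) swapped). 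As \(g\mid i_{t_0}\) we have \(r_0<g\le|i_{t_0}|\). The key computation, a one-line consequence of \(b_{t_0}a_{t_0}=c^{-1}a_{t_0}b_{t_0}\) and centrality of \(c\), is the identity
\[
a_{t_0}^{\,i_{t_0}-x}\,b_{t_0}^{\varepsilon}\,a_{t_0}^{\,x}\,b_{t_0}^{-\varepsilon}=c^{-\varepsilon x}\,a_{t_0}^{\,i_{t_0}},
\]
valid for every \(\varepsilon\in\{1,-1\}\) and every integer \(x\) lying between \(0\) and \(i_{t_0}\); its left-hand side is a word of length \(|i_{t_0}|+2\). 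Substituting this for the subword \(a_{t_0}^{\,i_{t_0}}\) inside the Mal'cev normal form of \(\alpha\), the liberated central \(c\) floats to the end, producing a word of length \(|\alpha|+2\) that represents \(\alpha c^{-\varepsilon x}\). Since \(0<r_0<|i_{t_0}|\), I can choose \(\varepsilon\) and \(x=\pm r_0\) with \(-\varepsilon x=r_0\), giving a conjugate of \(\alpha c^k\) of length \(|\alpha|+2\); hence \(|[\alpha c^k]|\le|\alpha|+2\).

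The only real work — and it is bookkeeping rather than difficulty — is in this last step: verifying the displayed identity (a short induction on \(x\) from \(b_{t_0}a_{t_0}=c^{-1}a_{t_0}b_{t_0}\)), checking that the substitution leaves the other Heisenberg blocks untouched (those generators commute with \(a_{t_0}\) and \(b_{t_0}\), and \(c\) is central), and tracking signs so that the attainable corrections \(\{-\varepsilon x:\varepsilon\in\{1,-1\},\ 0\le|x|\le|i_{t_0}|\}\) genuinely contain the residue \(r_0\) of the needed sign — which they do exactly because the gcd forces \(r_0<g\le|i_{t_0}|\). For the symmetric case one instead uses \(a_{t_0}b_{t_0}=cb_{t_0}a_{t_0}\) to get \(b_{t_0}^{\,j_{t_0}-y}a_{t_0}^{\varepsilon}b_{t_0}^{\,y}a_{t_0}^{-\varepsilon}=c^{\varepsilon y}b_{t_0}^{\,j_{t_0}}\) and proceeds identically. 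Conceptually the lemma says only that, thanks to the gcd appearing in Lemma \ref{lem:conjstructure}, the small central correction separating a shortest conjugate from the bare normal form \(\alpha\) can always be produced ``for free'' by spending a single commutator bracket somewhere along the word.
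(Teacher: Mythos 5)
Your proof is correct and follows essentially the same route as the paper's: the lower bound via the abelianisation (exponent sums cannot decrease), and the upper bound by inserting a single $b_{t_0}^{\pm1}\cdots b_{t_0}^{\mp1}$ (or $a_{t_0}^{\pm1}\cdots a_{t_0}^{\mp1}$) pair into one block of the normal form, using exactly the identity $a_{t_0}^{i_{t_0}-x}b_{t_0}^{\varepsilon}a_{t_0}^{x}b_{t_0}^{-\varepsilon}=c^{-\varepsilon x}a_{t_0}^{i_{t_0}}$ together with the observation that $g(\bar{\alpha})\le|i_{t_0}|$ makes the needed central correction reachable. The only divergence is cosmetic: the paper separately treats the case where some block has both $i_t,j_t\neq0$ (getting a representative of length exactly $|\alpha|$ there) and handles signs informally, whereas you pay the $+2$ uniformly and track signs via $\varepsilon$, which suffices for the stated range $[|\alpha|,|\alpha|+2]$.
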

\begin{proof}
	We claim that any element $\alpha c^k$ has length at least $|\alpha|$, so $|\alpha|\leq|[\alpha c^k]|$. To see this, consider any word over the generators $a_1,b_1,\ldots,a_r,b_r$ that represents $\alpha c^k$. We can put this into normal form by collecting the powers of generators into the given order, at the cost of powers of $c$, using the identity $[a_t,b_t]=c$ for each $t$. Note that the exponent sum of each generator can never increase. So any word representing $\alpha c^k$ has at least $|i_1|$ instances of $a_1^{\pm1}$, and so on.
	
	From the structure of conjugacy classes given in Lemma \ref{lem:conjstructure}, each conjugacy class $[\alpha c^k]$ has a representative of the form $\alpha c^{-l}$ where $0\leq l< g(\bar{\alpha})$ and a representative of the form \(\alpha c^{m}\) where \(0<m\leq g(\bar{\alpha})\).
	
	Assume that each power \(i_t, j_t\) is non-negative. Let \(I=\{m\in\N\mid i_m\neq0\}\) and \(J=\{n\in\N\mid j_n\neq0\}\). Then, by definition, \(g(\bar{\alpha})\leq\min\{i_m,j_n\mid m\in I, n\in J\}\). 
	
	Suppose that there is some \(t\in I\cap J\), i.e. both \(i_t\) and \(j_t\) are non-zero. We have
	\begin{equation*}
		a_t^{i_t-l}b_ta_t^{l}b_t^{j_t-1}=_G a_t^{i_t}[a_t^{-l},b_t]b_t^{j_t} =_G a_t^{i_t}b_t^{j_t}c^{-l}
	\end{equation*}
	and so
	\begin{equation}\label{eq:permute}
		a_1^{i_1}b_1^{j_1}\cdots a_t^{i_t-l}b_ta_t^{l}b_t^{j_t-1} \cdots a_r^{i_r}b_r^{i_r} =_G \alpha c^{-l},
	\end{equation}
	and thus we can represent the element \(\alpha c^{-l}\) with a word of length \(|\alpha|\).
	
	Now suppose that \(I\cap J\) is empty. Since \(\alpha\neq 1\), there exists \(t\in\N\) with either \(i_t\neq0\) or \(j_t\neq 0\). If \(i_t\neq 0\) we have
	\begin{equation}\label{eq:addbs}
		a_t^{i_t-l}b_ta_t^lb_t^{-1} = a_t^{i_t}[a_t^{-l},b_t]=_G a_t^{i_t}c^{-l}
	\end{equation}
	and if \(j_t\neq0\) we have
	\begin{equation}\label{eq:addas}
		a_t^{-1}b_t^la_tb_t^{j_t-l} = [a_t^{-1},b_t^l]b_t^{j_t} =_G b_t^{j_t}c^{-l}.
	\end{equation}
	Now, similarly to equation \eqref{eq:permute}, we can represent the element \(\alpha c^{-l}\) with a word of length \(|\alpha|+2\), since in equation \eqref{eq:addbs} we have inserted an extra \(b_t\) and \(b_t^{-1}\), and in equation \eqref{eq:addas} we have inserted an extra \(a_t\) and \(a_t^{-1}\). Therefore in general we have the bound $|\alpha c^{-l}|\leq |\alpha|+2$. This in turn implies that $|\alpha|\leq|[\alpha c^k]|=|[\alpha c^{-l}]|\leq|\alpha|+2$.
	
	If some powers \(i_t, j_t\) are negative, we can find words analogous to equation \eqref{eq:permute} that represent either \(\alpha c^{-l}\) or \(\alpha c^m\), depending on the combination of signs. Thus the result holds for all values of \(\alpha\).
\end{proof}

\begin{theorem}\label{thm:Hconjugacy}
	Let $G=\Gamma\times H_D$ be a finitely generated class 2 nilpotent group with infinite cyclic derived subgroup, with Heisenberg rank $r$. Let \(s\) be the torsion free rank of \(\Gamma\). Then
	\[c_G(n)\sim\begin{cases} n^{s+2}\log n & r=1 \\ n^{s+2r} & r\geq2.\end{cases} \]
\end{theorem}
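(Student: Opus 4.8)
The plan is first to peel off the abelian and the ``$D$'' data. By Corollary \ref{cor:classification} we may assume $G=\Gamma\times H_D$ with $\Gamma$ finitely generated abelian of torsion-free rank $s$, and the preceding proposition gives $c_{H_D}\sim c_{H_r}$. Since conjugacy classes in an abelian group are singletons, $c_\Gamma=\beta_\Gamma\sim n^s$ by Bass--Guivarc'h (Theorem \ref{thm:BG}), and Lemma \ref{lem:directprod} then yields $c_G\sim c_\Gamma\cdot c_{H_D}\sim n^s\cdot c_{H_r}(n)$. (By Proposition \ref{prop:gensetequiv} the choice of generating set is irrelevant, so we may use the natural generating sets throughout.) Thus everything reduces to showing $c_{H_r}(n)\sim n^2\log n$ for $r=1$ and $c_{H_r}(n)\sim n^{2r}$ for $r\geq2$.

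\textbf{Upper bound for $c_{H_r}$.} By Lemma \ref{lem:conjstructure} a conjugacy class of $H_r$ is either a singleton $\{c^k\}$ or has the form $\alpha c^k\langle c^{g(\bar{\alpha})}\rangle$, where $\alpha\neq1$ is the canonical lift of $\bar{\alpha}\in\Z^{2r}$; in the latter case there are exactly $g(\bar{\alpha})$ such classes for each fixed $\bar{\alpha}\neq0$, and by Lemma \ref{lem:conjlength} each of them has word length within $[|\alpha|,|\alpha|+2]$, independently of $k$. A standard area estimate shows $|c^k|\leq n$ forces $|k|=\cO(n^2)$, so there are only $\cO(n^2)$ singleton classes of length $\leq n$. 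Hence, since the length bound forces $|\alpha|\leq n$,
\[c_{H_r}(n)\;\leq\;\cO(n^2)+\sum_{\bm{x}\in B^{(2r)}_{\ell_1}(n)}g(\bm{x})\;\leq\;\cO(n^2)+\sum_{\bm{x}\in B^{(2r)}_\square(n)}g(\bm{x}),\]
and Corollary \ref{cor:gcdsum} bounds the last sum by a constant times $n^2\log n$ when $r=1$ and by a constant times $n^{2r}$ when $r\geq2$; the $\cO(n^2)$ term is negligible in both cases.

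\textbf{Lower bound for $c_{H_r}$ and conclusion.} Fix $m=\lfloor(n-2)/(2r)\rfloor$ and let $\bar{\alpha}$ range over the nonzero tuples with all coordinates in $\{0,1,\ldots,m\}$; then the canonical lift satisfies $|\alpha|\leq 2rm\leq n-2$, so all $g(\bar{\alpha})$ classes $[\alpha c^k]$ have length $\leq n$, and classes arising from distinct $\bar{\alpha}$, or from the same $\bar{\alpha}$ with $k$ in distinct residues mod $g(\bar{\alpha})$, are pairwise distinct. For $r\geq2$ this already gives $c_{H_r}(n)\geq(m+1)^{2r}\succcurlyeq n^{2r}$ using only $g\geq1$. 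For $r=1$, summing $g(\bar{\alpha})$ over this orthant is, up to the factor $2^{-2r}$ and the sign-invariance of $\gcd$, comparable to $\sum_{\bm{x}\in B^{(2)}_\square(m)}g(\bm{x})$, which by Corollary \ref{cor:gcdsum} is $\tfrac{R_2}{\zeta(2)}m^2\log m+\cO(m^2)$ with $R_2>0$ (it is a positive rational multiple of the leading coefficient of $\beta_{\Z^2}$); hence $c_{H_1}(n)\succcurlyeq m^2\log m\sim n^2\log n$. Combining the two bounds gives the asymptotics for $c_{H_r}$, and hence for $c_G$.

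\textbf{Main obstacle.} The substantive input is already in Lemmas \ref{lem:conjstructure} and \ref{lem:conjlength} and in Corollary \ref{cor:gcdsum}; what remains is bookkeeping. The two points needing a little care are (i) passing between the $\ell_1$-ball, which governs word length in Mal'cev form, and the cubical ball, for which the GCD sum is stated -- handled by the nesting $B^{(2r)}_\square(\lfloor m/(2r)\rfloor)\subseteq B^{(2r)}_{\ell_1}(m)\subseteq B^{(2r)}_\square(m)$ together with monotonicity of $g$-sums -- and (ii) ensuring that the lower bound matches the upper bound after restricting to a single orthant of the cubical ball, which for $r=1$ uses the positivity of the leading constant $R_2$ from Corollary \ref{cor:gcdsum}.
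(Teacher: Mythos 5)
Your proposal is correct and follows essentially the same route as the paper: the same reduction $c_G\sim n^s\cdot c_{H_r}$ via Lemma \ref{lem:directprod} and $c_{H_D}\sim c_{H_r}$, then sandwiching $c_{H_r}(n)$ between GCD sums using Lemmas \ref{lem:conjstructure} and \ref{lem:conjlength} and concluding with Corollary \ref{cor:gcdsum}. Your explicit handling of the passage between the $\ell_1$-ball and the cubical ball, and the use of only $g\geq1$ for the lower bound when $r\geq2$, are just bookkeeping details that the paper leaves implicit.
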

\begin{proof}
	By Lemma \ref{lem:directprod} and Theorem \ref{thm:BG} we have $c_G(n)\sim n^s\cdot c_{H_D}(n)\sim n^s\cdot c_{H_r}(n)$ and so it suffices to show that the conjugacy growth of $H_r$ is equivalent to $n^2\log n$ in the case $r=1$ and $n^{2r}$ otherwise. This follows from \cite{Babenko} but we provide a new argument here using more elementary methods.
	
	For a fixed non-identity $\alpha=a_1^{i_1}b_1^{j_1}\cdots a_r^{i_r}b_r^{j_r}\in H_r$, Lemma \ref{lem:conjstructure} implies that there are exactly $g(\bar{\alpha})$ conjugacy classes in the coset $\alpha\langle c\rangle$, and Lemma \ref{lem:conjlength} implies that they have length in the range $[|\alpha|,|\alpha|+2]$. The conjugacy classes of elements where \(\alpha\) is the identity are simply the elements of $\langle c\rangle$. Thus the conjugacy growth function satisfies the bounds
	\begin{align*}
		\beta_{\langle c\rangle}(n)+\sum_{\bar{\alpha}\in B_{\ell_1}(n-2)}g(\bar{\alpha})\leq c_{H_r}(n)\leq \beta_{\langle c\rangle}(n) + \sum_{\bar{\alpha}\in B_{\ell_1}(n)}g(\bar{\alpha}),
	\end{align*}
	where \(B_{\ell_1}(n)\) denotes the \(n\)-ball in \(\Z^{2r}\cong\Ab(H_r)\) with respect to the \(\ell_1\) norm.
	It is standard (and not hard to see) that $\beta_{\langle c\rangle}(n)\sim n^2$ for any $H_r$, noting for example that \([a_i^n,b_i^n]\) is a geodesic spelling of \(c^{n^2}\), with length \(4n\). Thus from Corollary \ref{cor:gcdsum} we have
	\begin{equation*}
		c_{H_r}(n)\sim\begin{cases}
			n^2\log n & r=1 \\
			n^{2r} & r\geq 2
		\end{cases}
	\end{equation*}
	which finishes the proof.
\end{proof}

\begin{remark}
	Comparing Theorem \ref{thm:Hconjugacy} to Corollary 4.2 of \cite{Babenko}, we see that Babenko proves a stronger result, covering a more general class of metrics and providing the leading coefficient of the growth function, but in a more restricted class of groups. In the case of \(r\geq 2\), we have \[c_{H_r}(n)=\frac{\zeta(2r-1)}{\zeta(2r)}R_r n^{2r} + o(n^{2r})\] where \(R_r\) is a rational number depending on the metric, and \(\zeta\) is the Riemann zeta function. Although it would contradict Conjecture \ref{conj:seriesconj}, if the conjugacy growth series of \(H_r\) turns out to be rational (respectively algebraic), then Proposition \ref{prop:leadingcoeff} would imply that \(\frac{\zeta(2r-1)}{\zeta(2r)}\) is a rational (respectively algebraic) number. As far as the author is aware, it is not known whether such a fraction is algebraic, although it seems unlikely.
\end{remark}


\section{Conjugacy Growth of Virtually Higher Heisenberg Groups}\label{sec:virtconjgrowth}
In this section we show that if \(G\) is commensurable to a higher Heisenberg group then they have equivalent conjugacy growth functions. We will need the following lemma.
\begin{lemma}\label{lem:subgrouprank}
	If \(U\) is a subgroup of \(H_1\) then the Heisenberg rank of \(U\) is at most \(1\).
\end{lemma}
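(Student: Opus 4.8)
The plan is to detect the Heisenberg rank via the commutator pairing on the abelianisation, and to exploit the fact that for a subgroup this pairing factors through the pairing of the ambient group. First I would dispose of the trivial cases: a subgroup $U\le H_1$ is finitely generated and nilpotent of class at most $2$ (standard facts about subgroups of finitely generated nilpotent groups), so if $U$ is abelian then its Heisenberg rank is $0$. Otherwise $U$ has class exactly $2$ and $[U,U]\le[H_1,H_1]=\langle c\rangle\cong\Z$ is non-trivial, hence infinite cyclic; by Corollary \ref{cor:classification}, $U\cong\Gamma\times H_D$ for some finitely generated abelian group $\Gamma$ of torsion-free rank $s$ and some $D=(\delta_1,\dots,\delta_{r-1})$, where $r$ is the Heisenberg rank of $U$. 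The goal is to show $r\le1$.

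The relevant invariant is the alternating $\Z$-bilinear form $b_U\colon\Ab(U)\times\Ab(U)\to[U,U]\cong\Z$, $(\bar u_1,\bar u_2)\mapsto[u_1,u_2]$, which is well defined since $[U,U]$ is central and which, being $\Z$-valued, kills torsion and hence descends to a $\Q$-bilinear form on $\Ab(U)\otimes\Q$. On one hand, computing in Mal'cev coordinates for $\Gamma\times H_D$ and using $[a_i,b_i]=c^{\delta_{i-1}}$ (with $\delta_0:=1$) while all other pairs among the generators $z_1,\dots,z_s,a_1,\dots,b_r$ commute, one finds that $b_U\otimes\Q$ is block diagonal on $\Q^{s+2r}$, with an $s\times s$ zero block followed by the blocks $\delta_{i-1}\Omega_{1,0}$; since each $\delta_{i-1}\ge1$ this form has rank $2r$. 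On the other hand, the inclusion $U\hookrightarrow H_1$ induces $\pi\colon\Ab(U)\to\Ab(H_1)\cong\Z^2$, and since in $H_1$ one has $[u_1,u_2]=c^{\bar u_1\Omega_{1,0}\bar u_2^{T}}$, so that the commutator of two elements depends only on their images in $\Ab(H_1)$, the form $b_U$ factors through $\pi$: it is the pullback along $\pi$ of a non-zero integer multiple of the standard symplectic form on $\Z^2$.

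Reading off the bound is then immediate: over $\Q$ the radical of $b_U\otimes\Q$ contains $\ker(\pi\otimes\Q)$, so the rank of $b_U\otimes\Q$ is at most $\dim_\Q\!\big(\mathrm{im}(\pi)\otimes\Q\big)\le2$. Combining with the previous paragraph gives $2r\le2$, that is $r\le1$.

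I expect the one step needing genuine care to be the assertion that the commutator form of $\Gamma\times H_D$ on its abelianisation has rational rank exactly $2r$; this is a routine Mal'cev-coordinate computation once one uses that $\Gamma$ is a central direct factor and that all the $\delta_i$ are non-zero. A less self-contained alternative avoids the form altogether: by Bass--Guivarc'h (Theorem \ref{thm:BG}) the standard growth degree of $\Gamma\times H_D$ is $s+2r+2$, whereas the inclusion $U\hookrightarrow H_1$ is Lipschitz for the word metrics, so $\beta_U(n)\preccurlyeq\beta_{H_1}(n)\sim n^4$; hence $s+2r+2\le4$ and again $r\le1$.
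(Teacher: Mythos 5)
Your argument is correct, but it runs along a different track from the paper's. The paper's proof is purely elementary: it observes that two elements of \(H_1\) commute if and only if their images in \(\Ab(H_1)\cong\Z^2\) are colinear, then notes that Heisenberg rank \(\ge 2\) would supply a quadruple \(x_1,y_1,x_2,y_2\) with \([x_1,y_1]\neq1\neq[x_2,y_2]\) but all cross-commutators trivial, and derives a contradiction from transitivity of colinearity through \(\bar{x}_2\neq0\). You instead package the same information into the commutator pairing on \(\Ab(U)\otimes\Q\): intrinsically its rank is \(2r\) (this is where you invoke Corollary \ref{cor:classification} and the Mal'cev-coordinate computation in \(\Gamma\times H_D\)), while extrinsically it is pulled back along \(\Ab(U)\to\Ab(H_1)\cong\Z^2\), so its rank is at most \(2\); hence \(r\le1\). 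Both the main route and your Bass--Guivarc'h variant (growth degree \(s+2r+2\preccurlyeq n^4\)) are sound. What your approach buys is generality and robustness: the rank argument shows at once that Heisenberg rank cannot increase under passing to subgroups of any \(\Gamma\times H_D\), and it also makes clear that \(r\) is an invariant (half the rank of the commutator form), whereas the paper's colinearity trick is specific to \(\Ab(H_1)\) having rank \(2\) but is shorter and needs no appeal to the classification or to growth. One small imprecision, with no effect on the argument: after identifying \([U,U]\) with \(\Z\) via a generator \(c^m\), the form \(b_U\) is a nonzero \emph{rational} (namely \(1/m\)-) multiple of the pullback of the standard symplectic form, not an integer multiple; all that matters is that the radical contains \(\ker(\pi\otimes\Q)\), which still holds.
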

\begin{proof}
	First, we claim that a pair of elements of \(H_1\) commute if and only if their images in \(\Ab(H_1)\) are colinear as vectors in \(\Z^2\). To see this, let \(g=a^ib^jc^k\) and \(h=a^Ib^Jc^K\) be a elements of \(H_1\). We have \(gh=a^{i+I}b^{j+J}c^{k+K-jI}\) and \(hg=a^{I+i}b^{J+j}c^{K+k-Ji}\) and so they commute if and only if \(jI=Ji\). This is equivalent to the vectors \(\bar{g}=(i,j)\) and \(\bar{h}=(I,J)\) being colinear (including the possibility that one or both is the zero vector).
	
	To prove the Lemma, suppose, on the contrary, that \(U\) has Heisenberg rank at least \(2\). This implies that there exist four elements \(x_1,y_1,x_2,y_2\in U\leq H_1\) such that \([x_1,y_1]\neq 1\), \([x_2,y_2]\neq1\), and 
	\begin{equation}\label{eq:xycommute}
		[x_1,x_2]=[x_1,y_2]=[y_1,x_2]=[y_1,y_2]=1.
	\end{equation}
	Now the claim above along with \eqref{eq:xycommute} implies that \(\bar{x}_1\) and \(\bar{x}_2\) are colinear, and \(\bar{y}_1\) and \(\bar{x}_2\) are colinear. Since being colinear is a transitive relation, \(\bar{x}_1\) and \(\bar{y_1}\) are colinear, and hence \(x_1\) and \(y_1\) commute, which is a contradiction.	
\end{proof}
Next, we show that conjugacy growth is preserved when passing to finite index subgroups.
\begin{lemma}\label{lem:fisubgroup}
	Let $G$ be a class 2 nilpotent group with infinite cyclic derived subgroup. If $H$ is a finite index subgroup of $G$ then $H$ is also class 2 nilpotent with infinite cyclic derived subgroup. Furthermore, $G$ and $H$ have equivalent conjugacy growth functions.
\end{lemma}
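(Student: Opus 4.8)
The statement has two halves: that $H$ inherits the algebraic structure, and that the conjugacy growth functions agree. (Implicitly $G$ — and hence $H$ — is finitely generated; otherwise conjugacy growth is not defined.)

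\emph{The algebraic part.} Since $G$ is genuinely of class $2$, choose $g,h\in G$ with $[g,h]=c^{t}$ for some $t\neq 0$, where $\langle c\rangle=G^{(1)}$. Let $m=[G:H]$. Each of $g$ and $h$ permutes the $m$ cosets of $H$ by left multiplication, so $g^{m!},h^{m!}\in H$. Because $G^{(1)}$ is central in $G$, the class-$2$ commutator identities give $[g^{m!},h^{m!}]=[g,h]^{(m!)^{2}}=c^{(m!)^{2}t}$, which is nontrivial since $\langle c\rangle\cong\Z$ is torsion-free. Hence $1\neq[H,H]\leq[G,G]\cong\Z$, so $[H,H]$ is a (finite-index) infinite cyclic subgroup of $\Z$; and $H^{(2)}=[H^{(1)},H]\leq[G^{(1)},G]=1$, so $H$ is nilpotent, of class exactly $2$ since it is not abelian. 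This proves the first assertion.

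\emph{The growth comparison.} By Corollary \ref{cor:classification} applied to $G$ we may write $G\cong\Gamma\times H_{D}$ with Heisenberg rank $r$ and torsion-free abelian rank $s$; by the first part and the same corollary, the finitely generated group $H$ is isomorphic to some $\Gamma'\times H_{D'}$ with parameters $(r',s')$. By Theorem \ref{thm:Hconjugacy} the functions $c_{G}$ and $c_{H}$ depend only on $(r,s)$ and $(r',s')$ respectively, so it suffices to show $r=r'$ and $s=s'$. One relation comes from standard growth: $H$ and $G$ are quasi-isometric (finite index), so $\beta_{G}\sim\beta_{H}$ by Proposition \ref{prop:betaQI}, and the Bass--Guivarc'h formula (Theorem \ref{thm:BG}) forces the exponents to coincide, i.e.\ $2r+s+2=2r'+s'+2$. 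The second relation is that the Heisenberg rank equals half the rank of the skew-symmetric $\Q$-bilinear form $\bar x\wedge\bar y\mapsto[x,y]$ on $\mathrm{Ab}(G)\otimes\Q$ with values in $G^{(1)}\otimes\Q\cong\Q$: for $\Gamma\times H_{D}$ this is the form $\Omega_{r,s}$ (with the diagonal blocks possibly scaled by the $\delta_{i}$, which does not change its rank), of rank $2r$. The inclusion $H\hookrightarrow G$ induces a homomorphism $\mathrm{Ab}(H)\to\mathrm{Ab}(G)$ with finite kernel and finite-index image, together with a finite-index inclusion $H^{(1)}\hookrightarrow G^{(1)}$, and these are compatible with forming commutators; tensoring with $\Q$ turns both maps into isomorphisms, so the two forms have equal rank and $2r=2r'$. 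Combining the two relations gives $r=r'$ and $s=s'$, and Theorem \ref{thm:Hconjugacy} then yields $c_{G}\sim c_{H}$. (If desired, $c_{H}\preccurlyeq c_{G}$ is immediate from Lemma \ref{lem:BC3.1(2)}, and only the reverse inequality needs the argument above.)

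\emph{Main obstacle.} The delicate point is the identity $r=r'$ — i.e.\ that Heisenberg rank is a commensurability invariant. The standard-growth exponent alone only determines $2r+s$, and examples such as $H_{2}$ versus $\Z^{2}\times H_{1}$ (equal exponent $6$, but conjugacy growth $n^{4}$ versus $n^{4}\log n$) show that a genuinely finer invariant is needed; the commutator-form-rank argument supplies it. The remaining steps are routine bookkeeping with the classification and the class-$2$ commutator identities.
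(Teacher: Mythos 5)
Your proof is correct, but the decisive step is handled quite differently from the paper. Both arguments establish the first assertion easily (the paper simply notes that $[H,H]$ has finite index in $[G,G]$, whereas you prove nontriviality directly via $[g^{m!},h^{m!}]=[g,h]^{(m!)^2}$, which suffices since a nontrivial subgroup of $\Z$ is infinite cyclic), and both obtain $2r+s=2r'+s'$ from quasi-isometry invariance of standard growth together with Theorem \ref{thm:BG}. Where you diverge is in showing $r=r'$: the paper rules out the two mismatched cases asymmetrically, using the conjugacy growth estimates themselves together with Lemma \ref{lem:BC3.1(2)} to exclude $r_1>1,\,r_2=1$ (since $n^{s_2+2}\log n\not\preccurlyeq n^{s_2+2}$), and a separate projection argument onto the $H_1$ factor combined with Lemma \ref{lem:subgrouprank} to exclude $r_1=1,\,r_2>1$. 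You instead observe that for a class 2 group the commutator induces a skew-symmetric form $\mathrm{Ab}(\cdot)\otimes\Q\times\mathrm{Ab}(\cdot)\otimes\Q\to G^{(1)}\otimes\Q$, that for $\Gamma\times H_D$ its rank is exactly $2r$, and that a finite-index inclusion induces isomorphisms after tensoring with $\Q$ compatible with the forms, so the rank (hence the Heisenberg rank) is preserved; combined with the growth-exponent identity this gives $r=r'$ and $s=s'$, and Theorem \ref{thm:Hconjugacy} finishes. Your route buys a cleaner, symmetric statement — Heisenberg rank is itself a finite-index (indeed commensurability) invariant, proved purely algebraically without invoking conjugacy growth or Lemma \ref{lem:subgrouprank} — at the cost of setting up the bilinear-form formalism; the paper's route leans only on results it already has in hand (Theorem \ref{thm:Hconjugacy}, Lemma \ref{lem:BC3.1(2)}, Lemma \ref{lem:subgrouprank}). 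The small assertions you leave implicit (finite kernel of $\mathrm{Ab}(H)\to\mathrm{Ab}(G)$, i.e. finiteness of $(H\cap G^{(1)})/H^{(1)}$, and finite index of $H^{(1)}$ in $G^{(1)}$) follow from the nontriviality of $H^{(1)}$ established in your first part, so there is no gap.
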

\begin{proof}
	Since $H$ is a subgroup of $G$ it is nilpotent of class at most 2 and since it has finite index, $[H,H]$ has finite index in $[G,G]$ and is therefore infinite cyclic. We have $G=\Gamma_1\times H_{D_1}$ and $H=\Gamma_2\times H_{D_2}$, where $\Gamma_1$ and $\Gamma_2$ are abelian and the groups $H_{D_1}$ and \(H_{D_2}\) are as in Definition \ref{def:HD}. Let $r_1$ and $r_2$ denote the corresponding Heisenberg ranks, and let $s_i$ denote the torsion-free rank of $\Gamma_i$.
	
	By Proposition \ref{prop:betaQI}, $G$ and $H$ have equivalent standard growth functions. Therefore by Theorem \ref{thm:BG} we have $s_1+2r_1+2=s_2+2r_2+2$, i.e. 
	\begin{equation}\label{eq:sr}
		s_1+2r_1 = s_2+2r_2.
	\end{equation}
	This also follows from Theorem \ref{thm:Pansu}. If $r_1,r_2\geq2$ then Theorem \ref{thm:Hconjugacy} implies that \(c_G(n)\sim n^{s_1+2r_1}\) and \(c_H(n)\sim n^{s_2+2r}\) and hence \(c_G\sim c_H\) by \eqref{eq:sr}. If \(r_1=r_2=1\) then \(s_1=s_2\) and Theorem \ref{thm:Hconjugacy} again implies that \(c_G\sim c_H\).

	Suppose $r_1>1$ and $r_2=1$. Applying Theorem \ref{thm:Hconjugacy}, we have \(c_G(n)\sim n^{s_1+2r_1}\) and \(c_H(n)\sim n^{s_2+2}\log n\). From \eqref{eq:sr}, \(n^{s_1+2r_1}=n^{s_2+2}\), and then \(c_G(n)\sim n^{s_2+2}\) and \(c_H(n)\sim n^{s_2+2}\log n\) together violate Lemma \ref{lem:BC3.1(2)}.
		
	Finally, suppose that \(r_1=1\) and \(r_2>1\). So there exist four elements \(x_1,y_1,x_2,y_2\in H<G=\Gamma_1\times H_1\) such that \(\langle x_1,y_1,x_2,y_2\rangle\leq G\) has Heisenberg rank \(2\). Since \(G\) is a direct product and \(\Gamma_1\) is abelian, two elements of \(G\) commute if and only if their \(H_1\) components commute. So by passing to just the \(H_1\) components of each element, we find elements \(x'_1,y'_1,x'_2,y'_2\) contained in the \(H_1\) factor of \(G\), that generate a subgroup of Heisenberg rank \(2\). This contradicts Lemma \ref{lem:subgrouprank}.
\end{proof}

Now we show that conjugacy growth is also preserved when passing to finite index supergroups.
\begin{theorem}\label{thm:virtH}
	Suppose \(H\) is a class 2 nilpotent group with infinite cyclic derived subgroup. If a group \(G\) contains \(H\) as a finite index subgroup, then \(G\) and \(H\) have equivalent conjugacy growth functions.
\end{theorem}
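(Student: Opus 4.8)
The plan is to derive Theorem~\ref{thm:virtH} from Proposition~\ref{prop:twistedconj}: it suffices to prove that \(c^{\phi}_H(n)\preccurlyeq c_H(n)\) for every finite-order \(\phi\in\Aut(H)\), since the conclusion \(c_G\sim c_H\) for every finite extension \(G\) of \(H\) is then exactly what Proposition~\ref{prop:twistedconj} delivers (the reverse inequality \(c_H\preccurlyeq c_G\) is also Lemma~\ref{lem:BC3.1(2)}). First I would reduce to the torsion-free case. Writing \(H\cong\Gamma\times H_D\) as in Corollary~\ref{cor:classification} and letting \(T\) be the finite torsion subgroup of \(H\), we have \(T\leq\Gamma\) since \(H_D\) is torsion-free, so \(T\) is \emph{central} and characteristic. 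Hence \(\phi\) induces a finite-order automorphism \(\bar\phi\) of the torsion-free group \(H/T\), which is again class \(2\) nilpotent with infinite cyclic derived subgroup, of the same Heisenberg rank \(r\) and torsion-free rank \(s\); by Theorem~\ref{thm:Hconjugacy} \(c_{H/T}\sim c_H\). Because \(T\) is central, the preimage in \(H\) of a \(\bar\phi\)-twisted conjugacy class is a union of at most \(|T|\) \(\phi\)-twisted conjugacy classes (multiplying a twisted conjugacy class by a central element gives another), and word lengths in \(H\) and \(H/T\) differ by a bounded constant, so \(c^{\phi}_H\preccurlyeq c^{\bar\phi}_{H/T}\). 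It therefore suffices to prove \(c^{\psi}_N(n)\preccurlyeq c_N(n)\) for every torsion-free \(N=\Gamma\times H_D\) and every finite-order \(\psi\in\Aut(N)\).

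Next I would use Proposition~\ref{prop:autoHexplicit} to write \(\psi\) via a finite-order matrix \(M\in\mathbf{M}\), a sign \(\varepsilon\in\{1,-1\}\), and a degree-\(2\) polynomial \(\gamma\) with \(\gamma(0)=0\). Iterating the formula of Proposition~\ref{prop:autoHexplicit} when \(\varepsilon=1\) gives \(\psi^{t}(\lift{\bar x}c^{u})=\lift{\bar x M^{t}}c^{\,u+\sum_{i=0}^{t-1}\gamma(\bar x M^{i})}\), so finiteness of the order forces \(\sum_{i=0}^{m-1}\gamma\circ M^{i}\equiv 0\); restricting to \(\ker(I-M)=\{\bar y:\bar yM=\bar y\}\) yields the crucial identity \(\gamma|_{\ker(I-M)}\equiv 0\). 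Computing \(x\,(\lift{\bar\alpha}c^{k})\,\psi(x)^{-1}\) for \(x=\lift{\bar y}c^{v}\) with \(\bar y\in\ker(I-M)\) then collapses, using \(\bar yM=\bar y\) and \(\gamma(\bar y)=0\), to \(\lift{\bar\alpha}c^{\,k+\bar y\Omega_{r,s}\bar\alpha^{T}}\). More generally this shows (the analogue of Lemma~\ref{lem:conjstructure}) that when \(\varepsilon=1\) the \(\psi\)-twisted conjugacy class of \(\lift{\bar\alpha}c^{k}\) projects onto the coset \(\bar\alpha+L\) of \(L:=\mathrm{im}(I-M)\) in \(\Ab(N)\cong\Z^{2r+s}\), and over each point of that coset the attained \(c\)-exponents form a single coset of \(\langle c^{D(\bar\alpha)}\rangle\), where
\[D(\bar\alpha)=\gcd\bigl\{\,v\,\Omega_{r,s}\,\bar\alpha^{T}\ :\ v\ \text{in a fixed basis of}\ \ker(I-M)\,\bigr\},\]
and \(D\) is constant on cosets of \(L\) (one checks \(v\,\Omega_{r,s}(I-M)^{T}=0\) for \(v\in\ker(I-M)\)). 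An argument parallel to Lemma~\ref{lem:conjlength} shows each class has a representative of word length within a bounded constant of \(\min\{|\bar h|_{\ell_1}:\bar h\in\bar\alpha+L\}\). In the remaining case \(\varepsilon=-1\), conjugating \(\lift{\bar\alpha}c^{k}\) by powers of \(c\) already moves the \(c\)-exponent by every even integer, so each \(L\)-coset supports at most two \(\psi\)-twisted conjugacy classes.

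It then remains to count. For \(\varepsilon=-1\): \(c^{\psi}_N(n)\leq 2\,\beta_{\Ab(N)}(n+C)\sim n^{2r+s}\preccurlyeq c_N(n)\). For \(\varepsilon=1\): the twisted conjugacy classes of length \(\leq n\) are indexed by an \(L\)-coset meeting \(B^{(2r+s)}_{\ell_1}(n+C)\) together with a residue modulo \(D\); passing to \(\Ab(N)/L\), of rank \(d:=2r+s-\mathrm{rank}(I-M)\), on which \(D\) descends to a gcd \(\widetilde D\) of \(t\) linearly independent integer linear forms,
\[c^{\psi}_N(n)\ \leq\ \sum_{\bar w\,\in\,B^{(d)}_{\ell_1}(n+C)}\widetilde D(\bar w).\]
A Smith-normal-form change of coordinates makes those forms integer multiples of \(t\) distinct coordinate functions, so \(\widetilde D(\bar w)\) is bounded by a constant times a gcd of \(t\) coordinates, and Corollaries~\ref{cor:gcdsum} and~\ref{cor:gcdoffset} bound the sum by a constant times \(n^{d+2},n^{d+1},n^{d}\log n,n^{d}\) according as \(t=0,1,2,\geq 3\). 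Since \(\Omega_{r,s}\) has \(s\)-dimensional radical one has \(t\geq 2r-\mathrm{rank}(I-M)\), and a short case check shows this bound is \(\preccurlyeq n^{s+2}\log n\) when \(r=1\) (where \(t\leq 2\) is forced, the image of \(v\mapsto v\Omega_{1,s}\) being \(2\)-dimensional) and \(\preccurlyeq n^{2r+s}\) when \(r\geq 2\); in both cases this is \(\sim c_N(n)\) by Theorem~\ref{thm:Hconjugacy}. Hence \(c^{\psi}_N\preccurlyeq c_N\), so \(c^{\phi}_H\preccurlyeq c_H\), and Proposition~\ref{prop:twistedconj} finishes the proof.

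The main obstacle is the structural description of \(\psi\)-twisted conjugacy classes in the case \(\varepsilon=1\): that over each \(\mathrm{im}(I-M)\)-coset the attained \(c\)-exponents form a single cyclic coset with gcd-type index. This is exactly where finite order of \(\psi\) is essential, through \(\gamma|_{\ker(I-M)}\equiv 0\); for a general automorphism the quadratic polynomial \(\gamma\) would make that set of \(c\)-exponents genuinely two-parameter and inflate the count beyond \(c_H\). Everything after the structure result is the bookkeeping of gcd-sums against the two regimes (\(r=1\) versus \(r\geq 2\)) of Theorem~\ref{thm:Hconjugacy}.
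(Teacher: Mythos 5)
Your argument is correct in its essentials but takes a genuinely different route through the main estimate, so a comparison is worthwhile. The paper first drops a finite direct factor and passes to a finite-index \emph{normal} torsion-free subgroup (Lemmas \ref{lem:directprod} and \ref{lem:fisubgroup}), and then proves \(c^{\phi}_H\preccurlyeq c_H\) for \emph{every} \(\phi\in\Aut(H)\), not only finite-order ones, by a case analysis on \(\mathrm{rk}(M-I)\): a crude dimension count when the rank is at least \(2\), an argument using the quadratic polynomial along a line in \(\ker(M-I)\) when the rank is \(1\), and the explicit linear computation plus Corollary \ref{cor:gcdoffset} when \(M=I\). You instead exploit finite order of \(\psi\): iterating Proposition \ref{prop:autoHexplicit} gives \(\gamma|_{\ker(I-M)}\equiv0\), hence \(\psi\) fixes \(\lift{\bar y}\) for \(\bar y\in\ker(I-M)\), so each twisted class projects onto a full coset of \(\mathrm{Im}(I-M)\) and contains, over every point of that coset, a full coset of \(\langle c^{D}\rangle\), where \(D\) is the gcd of the linear forms \(\bar y\,\Omega_{r,s}(\cdot)^T\) with \(\bar y\in\ker(I-M)\) (your verification that these forms are constant on \(\mathrm{Im}(I-M)\)-cosets, via \(M\Omega_{r,s}M^T=\Omega_{r,s}\), is correct). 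This is a twisted analogue of Lemmas \ref{lem:conjstructure} and \ref{lem:conjlength} and yields one gcd-sum count uniform in \(\mathrm{rk}(I-M)\), recovering the paper's \(M=I\) computation as a special case; the trade-off is that your argument genuinely needs finite order, which is all Proposition \ref{prop:twistedconj} requires, while the paper's covers all automorphisms. Your torsion reduction (quotient by the central characteristic torsion subgroup, with the at-most-\(|T|\)-to-one comparison of twisted classes) also differs from the paper's and works. (Your convention \(xh\psi(x)^{-1}\) computes \(\psi^{-1}\)-twisted classes in the paper's convention; this is harmless since you treat all finite-order automorphisms.)

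Two points need patching, both minor. First, the Theorem assumes only that \(H\) has finite index in \(G\), whereas Proposition \ref{prop:twistedconj}, as proved, concerns normal subgroups; begin by replacing \(H\) with a finite-index normal subgroup of \(G\) contained in \(H\) (the normal core), which by Lemma \ref{lem:fisubgroup} is again class \(2\) nilpotent with infinite cyclic derived subgroup and has equivalent conjugacy growth--this is the paper's opening step, after which your reduction proceeds unchanged. Second, your master inequality \(c^{\psi}_N(n)\leq\sum_{\bar w}\widetilde D(\bar w)\) fails on the sublattice where all the forms vanish: there \(\widetilde D=0\), yet twisted classes over such cosets still exist. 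Handle those cosets as in your \(t=0\) case: every class of length at most \(n\) over such a coset meets a fixed fibre with \(c\)-exponent \(\cO(n^2)\), so they contribute \(\cO(n^{d-t+2})\), which is lower order in every branch of your final case check. Finally, your claims that the attained \(c\)-exponents form \emph{exactly} one coset of \(\langle c^{D}\rangle\), and that each class has a representative within an additive constant of the shortest point of its abelianised coset, are stronger than what you prove or need; only the containment of a full coset of \(\langle c^{D}\rangle\), which you do establish, enters the upper bound.
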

Before we prove this Theorem, we note the following consequence.
\begin{corollary}\label{cor:hrank1}
	If \(G\) is (virtually) nilpotent with Heisenberg rank equal to 1, then its conjugacy growth series is non-holonomic, with respect to any finite generating set.
\end{corollary}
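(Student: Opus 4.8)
\section*{Proof proposal for Corollary \ref{cor:hrank1}}

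The plan is to reduce the statement to the asymptotic estimate $c_G(n)\sim n^{s+2}\log n$ already obtained and then to quote Theorem \ref{thm:nonholo}. First I would pass to a suitable nilpotent model. If $G$ is virtually nilpotent of Heisenberg rank $1$, then by definition it contains a finite index subgroup $H$ that is class $2$ nilpotent with $[H,H]\cong\Z$ and of Heisenberg rank $1$ (in the genuinely nilpotent case one simply takes $H=G$). Theorem \ref{thm:virtH} then gives $c_G\sim c_H$, and writing $H\cong\Gamma\times H_D$ with $s$ the torsion-free rank of $\Gamma$, Theorem \ref{thm:Hconjugacy} yields $c_H(n)\sim n^{s+2}\log n$, hence $c_G(n)\sim n^{s+2}\log n$. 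Combining with Proposition \ref{prop:gensetequiv}, this holds for $c_{G,S}$ with respect to every finite generating set $S$.

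Next I would record the elementary observation that the property ``lies strictly between $n^d$ and $n^{d+1}$'' is an invariant of the $\sim$-equivalence class: if $f\sim g$ and $n^d\prec g\prec n^{d+1}$, then $n^d\preccurlyeq g\preccurlyeq f$ and $f\preccurlyeq g\preccurlyeq n^{d+1}$ give $n^d\preccurlyeq f\preccurlyeq n^{d+1}$, while $f\sim n^d$ or $f\sim n^{d+1}$ would force $g\sim n^d$ or $g\sim n^{d+1}$, a contradiction. Since $n^{s+2}\prec n^{s+2}\log n\prec n^{s+3}$, it follows that for every finite generating set $S$ the function $c_{G,S}$ satisfies $n^{s+2}\prec c_{G,S}(n)\prec n^{s+3}$ (and it is of course non-decreasing and in the polynomial range, though Theorem \ref{thm:nonholo} needs neither).

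Finally I would apply Theorem \ref{thm:nonholo} with $d=s+2$ to conclude that the conjugacy growth series $C_{G,S}(z)=\sum_{n\ge0}c_{G,S}(n)z^n$ is not holonomic, for every finite generating set $S$. All the genuine content lies upstream: in Theorem \ref{thm:Hconjugacy} (the $n^{s+2}\log n$ estimate), Theorem \ref{thm:virtH} (commensurability invariance of conjugacy growth), and the P\'olya--Carlson argument of Theorem \ref{thm:nonholo}. The only point requiring care is that Theorem \ref{thm:nonholo} is phrased for a fixed function rather than for an equivalence class; the $\sim$-invariance check of the second paragraph is exactly what bridges this gap and makes the conclusion generating-set independent. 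I do not expect any obstacle beyond this bookkeeping.
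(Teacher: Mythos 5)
Your proposal is correct and follows essentially the same route as the paper: Theorems \ref{thm:virtH} and \ref{thm:Hconjugacy} give $c_G(n)\sim n^{d}\log n$ for some $d$, and Theorem \ref{thm:nonholo} then rules out holonomy. Your extra check that ``strictly between $n^d$ and $n^{d+1}$'' is a $\sim$-invariant is exactly the bookkeeping the paper leaves implicit, so there is no gap.
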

\begin{proof}
	Theorems \ref{thm:virtH} and \ref{thm:Hconjugacy} show that the conjugacy growth of such a group is equivalent to \(n^d\log n\) for some positive integer \(d\). But by Theorem \ref{thm:nonholo}, no such function can have a holonomic power series.
\end{proof}

To prove Theorem \ref{thm:virtH}, we will use a general necessary and sufficient condition for conjugacy growth to be preserved under finite extensions. First, we define twisted conjugacy growth.
\begin{definition}\label{def:twistedconj}
	Let \(\phi\) be a fixed automorphism of a group \(G\). Define the \emph{\(\phi\)-twisted conjugacy class} of \(g\in G\) by \[[g]^{\phi} = \{\phi(h)gh^{-1}\mid h\in G\}\] and write \(\mathcal{C}^{\phi}_G\) for the set of \(\phi\)-twisted conjugacy classes of \(G\) (it is easy to check that \(\phi\)-twisted conjugacy defines an equivalence relation). As in Definitions \ref{def:stangrowth} and \ref{def:conjgrowth}, the length of a twisted conjugacy class is defined as
	\[|[g]^{\phi}|_S=\min\{|h|_S\mid h\in[g]_{\phi}\}\] and the corresponding \emph{\(\phi\)-twisted conjugacy growth function} is
	\[c^{\phi}_{G,S}(n) = \#\{\kappa\in\mathcal{C}^{\phi}_G\mid |\kappa|_S\leq n\}.\]
\end{definition}
It is not hard to prove that the analogue of Proposition \ref{prop:gensetequiv} holds in this case. In other words, the equivalence class of \(\phi\)-twisted conjugacy growth does not depend on the choice of generating set and we can therefore drop the \(S\) from the notation. The following result is reminiscent of Theorem 3.1 of \cite{BogMartVen} in that conjugacy in an extension is understood via twisted conjugacy of the base group.
\begin{proposition}\label{prop:twistedconj}
	Let \(H\) be any finitely generated group. Then \(c^{\phi}_H(n)\preccurlyeq c_H(n)\) for every finite-order automorphism \(\phi\in\Aut(H)\) if and only if every finite extension \(G\) of \(H\) satisfies \(c_G(n)\sim c_H(n)\).
\end{proposition}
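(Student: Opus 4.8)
The plan is to prove both directions by relating conjugacy classes of a finite extension $G$ of $H$ to twisted conjugacy classes of $H$, where the twists come from the (finitely many) automorphisms of $H$ induced by conjugation in $G$. Write $[G:H] = m$ and fix coset representatives $t_1 = 1, t_2, \ldots, t_m$, so every element of $G$ is uniquely $t_i h$ for some $h \in H$. Since $H$ need not be normal in $G$ one should first pass to a finite-index subgroup $H_0 \leq H$ that is normal in $G$ (the normal core), of finite index in $G$; by Lemma~\ref{lem:BC3.1(2)} we have $c_{H_0} \preccurlyeq c_H$ and (for the forward direction applied to $G$ as a finite extension of $H_0$) it suffices to work with $H_0$, while noting the hypothesis ``$c^\phi_H \preccurlyeq c_H$ for all finite-order $\phi$'' passes to $H_0$ because a finite-order automorphism of $H_0$ need not extend to $H$ --- so one should instead argue directly with $H$ normalized, or observe that $c_{H_0} \sim c_H$ will follow from the theorem applied inductively; cleaner is to just assume $H \lhd G$ after replacing $H$ by its core and checking the hypothesis descends, which is the one genuinely fiddly bookkeeping point.

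For the direction ``($\Leftarrow$)'': given a finite-order $\phi \in \Aut(H)$, I would build a finite extension $G = H \rtimes_\phi \langle t \rangle$ where $t$ has order $N = \mathrm{ord}(\phi)$ and acts by $\phi$. Then for $h \in H$, the conjugacy class $[th]_G$ restricted to the coset $tH$ is exactly $\{ t \cdot \phi(x) h x^{-1} : x \in H\}$ (using $xtx^{-1} = t \cdot t^{-1}xt x^{-1} = t\phi^{-1}(x)x^{-1}$, so a short computation identifies the $tH$-part of the $G$-class of $th$ with a $\phi$-twisted class of $H$, possibly up to conjugating $h$ by powers of $t$, i.e. a finite ambiguity). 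Hence $c^\phi_H(n) \preccurlyeq c_{G, tH\text{-part}}(n) \leq c_G(n) \sim c_H(n)$, giving $c^\phi_H \preccurlyeq c_H$. Word-length comparisons between the $S$-metric on $H$ and a generating set $S \cup \{t\}$ on $G$ are linear (since $\langle t\rangle$ is finite), so the $\sim$-classes are preserved throughout.

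For the direction ``($\Rightarrow$)'': let $G$ be any finite extension of $H$; after passing to the normal core assume $H \lhd G$. Each coset $t_iH$ ($2 \leq i \leq m$) maps into itself under $G$-conjugation by $H$, and conjugation by $t_i$ induces $\phi_i := (x \mapsto t_i x t_i^{-1}) \in \Aut(H)$, which has finite order since $\mathrm{Aut}(H)/\mathrm{Inn}(H)$-image of $t_i$ is torsion (as $t_i^{k} \in H$ for $k = m!$, say, so $\phi_i^{m!}$ is inner --- one needs $\phi_i$ itself to have finite order, which holds after possibly adjusting representatives or by noting inner automorphisms can be absorbed, since $[g]^{\phi}$ and $[g]^{\iota_h \circ \phi}$ have comparable growth). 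A conjugacy class of $G$ meeting $t_i H$ decomposes, within that coset, into finitely many $\phi_i$-twisted conjugacy classes of $H$ (again the finite ambiguity is conjugation by powers of representatives). Summing over $i$: $c_G(n) \preccurlyeq c_H(n) + \sum_{i=2}^m c^{\phi_i}_H(C n) \preccurlyeq c_H(n)$ using the hypothesis, while $c_H \preccurlyeq c_G$ is immediate from Lemma~\ref{lem:BC3.1(2)}; hence $c_G \sim c_H$.

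The main obstacle I expect is the bookkeeping around non-normality and inner automorphisms: making precise that (i) replacing $H$ by its normal core in $G$ does not change the $\sim$-class of conjugacy growth and does not break the twisted-growth hypothesis, and (ii) that the relevant twisting automorphism $\phi_i$ can be taken genuinely of finite order (rather than merely finite order modulo $\mathrm{Inn}$), which requires showing $c^{\phi}_H \sim c^{\iota \circ \phi}_H$ for inner $\iota$ --- an easy but necessary lemma, proved by the substitution $h \mapsto h \cdot(\text{fixed element})$ shifting one twisted class to another with bounded length distortion. Everything else --- the identification of $tH$-parts of $G$-classes with twisted $H$-classes, and the linear comparison of word metrics --- is routine.
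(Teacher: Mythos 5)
Your proposal follows essentially the same route as the paper: for the forward direction, split $G$ into the finitely many cosets $tH$, identify $H$-conjugacy inside $tH$ with $\phi_t$-twisted conjugacy of $H$ via $[th]_H=t[h]^{\phi_t}$, and compare word metrics to sum the (finitely many) coset contributions; for the converse, the paper uses exactly your semidirect product $H\rtimes_\phi\Z/k\Z$ (phrased contrapositively rather than directly), with the same observation that passing from $H$-classes $t[h]^{\phi}$ to full $G$-classes merges at most $k$ of them.

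Two corrections to the points you flag as fiddly. First, drop the normal-core detour: a finite extension of $H$ here means $H$ is normal of finite index in $G$ (the paper's proof invokes normality directly), and if normality were not given your detour would be a genuine gap rather than bookkeeping, since the twisted-growth hypothesis is a statement about $H$ and need not descend to the core, and $c_{\mathrm{core}}\sim c_H$ is not available in general because conjugacy growth is not a finite-index invariant (Hull--Osin). Second, your concern that $\phi_i$ may only be torsion modulo $\Inn(H)$ is legitimate, but your proposed fix is incomplete: the identity $[g]^{\iota_a\circ\phi}=a\,[a^{-1}g]^{\phi}$ does give $c^{\iota_a\circ\phi}_H\sim c^{\phi}_H$, yet an automorphism of finite order in $\mathrm{Out}(H)$ need not factor as an inner automorphism composed with a genuinely finite-order one, so this lemma alone does not reduce $\phi_i$ to the case covered by the hypothesis. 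The paper simply takes the conjugation automorphism $\phi_t$ to be of finite order at this step; in its application (Theorem \ref{thm:virtH}) the bound $c^{\phi}_H\preccurlyeq c_H$ is established for every $\phi\in\Aut(H)$, so the issue is immaterial there, but if you want the proposition in full generality you should either state it for the automorphisms induced by the extension or address this reduction honestly rather than via the inner-absorption lemma.
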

\begin{proof}
	First, suppose that \(c^{\phi}_H(n)\preccurlyeq c_H(n)\) for every finite order \(\phi\in\Aut(H)\). For any finite extension \(G\), Lemma \ref{lem:BC3.1(2)} states that \(c_H(n)\preccurlyeq c_G(n)\), so it remains to show the opposite bound. Fix a choice of transversal, \(T\subset G\), for \(G/H\). We consider the cosets $tH$ separately, for each $t\in T$ (since the conjugacy class of any element of \(tH\) is contained within \(tH\) by normality). If a pair of elements of \(tH\) are conjugate by an element of \(H\) then they are in the same conjugacy class (in \(G\)). Therefore the number of \(H\)-conjugacy classes in the \(n\)-ball is at least the number of \(G\)-conjugacy classes. Therefore only considering conjugation by elements of \(H\) will give an upper bound for the \(G\)-conjugacy growth of \(tH\). Fixing $t\in T$, and choosing an element $h\in H$, we have the $H$-conjugacy class
	\[[th]_H = \{x th x^{-1}\mid x\in H\} = \{t\phi_t(x)hx^{-1}\mid x\in H\}=t[h]^{\phi_t},\]
	where $\phi_t\in\Aut (H)$ is the finite-order automorphism defined by conjugation: $\phi_t\colon\gamma\mapsto t^{-1}\gamma t$. So we can understand \(H\)-conjugation in \(tH\) as \(\phi_t\)-twisted conjugation in \(H\) itself.
	
	Since \(c^{\phi_t}_H(n)\preccurlyeq c_H(n)\) by hypothesis, and the growth of \(tH\) is equivalent to the growth of \(H\) (as per Lemma \ref{lem:relgrowth}), the contribution to the conjugacy growth of \(G\) from the coset \(tH\) is at most the conjugacy growth of \(H\). Since there are only finitely many such cosets, we have \(c_G(n)\preccurlyeq c_H(n)\) as claimed.
	
	For the converse part of the statement, we prove its contrapositive, namely that if there exists some finite order automorphism giving twisted conjugacy growth strictly greater than untwisted conjugacy growth then there is a finite extension of \(H\) with inequivalent conjugacy growth. Suppose \(\phi\in\Aut(H)\) is such an automorphism. So there exists \(k\in\N\) such that \(\phi^k\) is the identity automorphism, and we have \(c^{\phi}_H(n)\succ c_H(n)\). Form the semidirect product \(G:=H\rtimes_{\phi}\Z/k\Z\), where the finite cyclic group is generated by \(t\), which acts on \(H\) via \(t^{-1}ht=\phi(h)\) for all \(h\in H\). We have \(c_H\preccurlyeq c_G\) from Lemma \ref{lem:BC3.1(2)}, and we need to show this is a strict inequality to prove our result. It is enough to do this for one of the \(k\) cosets of \(H\) in \(G\) (which are again closed under conjugation since \(H\lhd G\)). We consider conjugating an element of the coset \(tH\) by elements of \(H\). Let \(h\in H\). As above we have
	\begin{align*}
		[th]_H = \{x thx^{-1}\mid x\in H\} = \{t\phi(x)hx^{-1}\mid x\in H\} = t[h]^{\phi}
	\end{align*}
	and so, again, \(H\)-conjugacy classes of the coset \(tH\) are \(t\)-translates of \(\phi\)-twisted conjugacy classes of \(H\). Therefore the number of \(H\)-conjugacy classes in \(tH\) which intersect the \(n\)-ball is equivalent to the twisted conjugacy growth function \(c^{\phi}_H(n)\). We have 
	\begin{align*}
		[th]_G = \{g th g^{-1}\mid g\in G\} = \{t^ixthx^{-1}t^{-i}\mid x\in H, 0\leq i\leq k-1\} = \bigcup_{i=0}^{k-1}t^i[th]_Ht^{-i}
	\end{align*}
	and so passing from \(H\)-conjugacy classes to full conjugacy can only increase the number of conjugacy classes in the \(n\)-ball by at most a constant factor of \(k\), which doesn't change the equivalence class. Thus the contribution to conjugacy growth from \(tH\) is equivalent to \(c^\phi_H(n)\), which is strictly greater than \(c_H(n)\) by hypothesis, and we have \(c_G(n)\succ c_H(n)\) as required.
\end{proof}

Now we apply this general criterion to the specific case at hand. We will need the following simple observation.
\begin{lemma}\label{lem:Ucosets}
	Let \(U\) be a subgroup of \(\Z^d\), and therefore a free abelian group of rank \(u\leq d\). Then the \(n\)-ball in \(\Z^d\) (with respect to the standard generating set of unit vectors) intersects \(\cO(n^{d-u})\) distinct cosets of \(U\).
\end{lemma}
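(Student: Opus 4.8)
The plan is to reduce to a clean coordinate picture via the structure theory of subgroups of $\Z^d$. By the theory of the Smith normal form, there is a basis $\{e_1,\dots,e_d\}$ of $\Z^d$ and positive integers $m_1,\dots,m_u$ such that $\{m_1 e_1,\dots,m_u e_u\}$ is a basis of $U$. In this basis, two elements $x = (x_1,\dots,x_d)$ and $y = (y_1,\dots,y_d)$ lie in the same coset of $U$ if and only if $x_i \equiv y_i \pmod{m_i}$ for $1 \le i \le u$ and $x_i = y_i$ for $u < i \le d$. Thus a coset of $U$ is determined precisely by the data $(x_1 \bmod m_1, \dots, x_u \bmod m_u, x_{u+1}, \dots, x_d)$.

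Now I would bound the number of such tuples realised inside the $n$-ball. Changing from the standard basis to $\{e_1,\dots,e_d\}$ is a fixed linear isomorphism of $\Z^d$, hence a bi-Lipschitz (indeed linear) map of the underlying $\R^d$; so the image of the standard $n$-ball is contained in a box $\{|x_i| \le Cn : 1 \le i \le d\}$ for some constant $C$ depending only on the change-of-basis matrix (and therefore only on $U$ and $d$). The number of cosets of $U$ meeting this box is at most the number of realisable tuples, which is at most $m_1 m_2 \cdots m_u \cdot (2Cn+1)^{d-u}$: there are at most $m_i$ residues mod $m_i$ in each of the first $u$ coordinates, and at most $2Cn+1$ integer values in each of the remaining $d-u$ coordinates. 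Since $m_1\cdots m_u$ and $C$ are constants independent of $n$, this is $\cO(n^{d-u})$, as claimed.

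The only mild subtlety — and it is genuinely mild — is in handling the change of basis: one must note that the $n$-ball with respect to the standard unit-vector generating set (equivalently, the $\ell_1$-ball) maps under a fixed $\GL_d(\Z)$ transformation into a cube of side $\cO(n)$, so that the bound on the number of distinct values of each transformed coordinate is linear in $n$. This is immediate from the boundedness of the (finitely many) entries of the transformation matrix. Everything else is bookkeeping with the Smith normal form, so I expect no real obstacle here; the lemma is essentially a packaging of the observation that a finite-index-in-each-free-direction subgroup only sees $\cO(1)$ cosets per bounded region in those directions and $\cO(n)$ cosets per coordinate in the complementary directions.
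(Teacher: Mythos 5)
Your argument is correct. Note that the paper itself offers no proof of this lemma -- it is stated as a ``simple observation'' and used directly -- so there is nothing to compare against; your Smith-normal-form/adapted-basis argument fills the gap cleanly and is the standard way to do it. The only remark worth making is that the same bookkeeping can be compressed: the cosets of \(U\) meeting the \(n\)-ball are in bijection with the image of the ball under the quotient map \(\Z^d\to\Z^d/U\cong\Z^{d-u}\times T\) (with \(T\) finite), and since the quotient map does not increase word length with respect to the image generating set, that image lies in the \(n\)-ball of \(\Z^{d-u}\times T\), which has \(\cO(n^{d-u})\) elements. Your change-of-basis step (a fixed \(\GL_d(\Z)\) matrix sends the \(\ell_1\) \(n\)-ball into a box of side \(\cO(n)\)) is exactly right and handles the only point that needs care.
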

\begin{proof}[Proof of Theorem \ref{thm:virtH}.]
	To apply Proposition \ref{prop:twistedconj}, we need a normal subgroup. Any torsion in \(H\) is contained in the abelian factor, and is therefore a finite subgroup, so we initially pass to the torsion-free direct factor. The conjugacy growth of a finite group is clearly eventually constant, so Lemma \ref{lem:directprod} shows that removing a finite direct factor preserves (the equivalence class of) the conjugacy growth function. Then, a standard argument allows us to pass to a \emph{normal} subgroup of \(G\), contained in \(H\), also of finite index. By Lemma \ref{lem:fisubgroup}, this subgroup has equivalent conjugacy growth to the original subgroup. Therefore, we may assume without loss of generality that \(H\) is normal and torsion-free. Now to prove the Theorem it is enough to show that \(c^{\phi}_H(n)\preccurlyeq c_H(n)\) for any \(\phi\in\Aut(H)\).
	
	We explicitly describe $\phi$-twisted conjugacy in \(H\), for any fixed \(\phi\in\Aut(H)\). With \(h=z_1^{x_1}\cdots z_s^{x_s}a_1^{u_1}b_1^{v_1}\cdots a_r^{u_r}b_r^{v_r}c^w\in H\) and \(x=z_1^{l_1}\cdots z_s^{l_s}a_1^{i_1}b_1^{j_1}\cdots a_r^{i_r}b_r^{j_r}c^k\in H\), Proposition \ref{prop:autoHexplicit} gives
\begin{align}
	\nonumber\phi(x)hx^{-1} 	&=\phi(z_1^{l_1}\cdots b_r^{j_r}c^k)\cdot z_1^{x_1}\cdots b_r^{v_r}c^w \cdot z_1^{-l_1}\cdots b_r^{-j_r}c^{-k-\sum_{\lambda} i_{\lambda}j_{\lambda}}\\
	\nonumber&=\lift{\Big( (l_1,\ldots,j_r)(M-I) + (x_1,\ldots, v_r) \Big)} c^{\varepsilon k -k +w -\sum_{\lambda} i_{\lambda}j_{\lambda} +\sum_{\lambda}(v_{\lambda}i_{\lambda}-u_{\lambda}j_{\lambda}) +\gamma(l_1,\ldots,j_r)}\\
	&=\lift{\Big( (l_1,\ldots,j_r)(M-I) + (x_1,\ldots,v_r) \Big)} c^{\varepsilon k -k +w +f(l_1,\ldots,j_r)} \label{eq:twistedconj}
\end{align}
where \(\gamma\) is a polynomial of degree 2 (and therefore so is \(f\)), \(\varepsilon\in\{-1,1\}\), and \(M\in\mathbf{M}\), and these all depend only on \(\phi\).
We analyse various cases depending on the value of \(\varepsilon\) and the nature of the matrix \(M-I\) (which defines an endomorphism of \(\mathrm{Ab}(H)\cong\Z^{2r+s}\)).

	\begin{enumerate}
		\item If \(\phi\) is the identity automorphism then \(\phi\)-twisted conjugacy is nothing more than standard conjugacy, and so \(c^{\phi}_H(n)=c_H(n)\) in this case. In the remaining cases we assume that \(\phi\) is not the identity.
		
		\item Suppose $\varepsilon=-1$, i.e. $\phi$ inverts the commutator. If $x=c^k$ then $\phi_t(x)hx^{-1}=hc^{-2k}$. So just considering conjugators of the form \(c^k\) for varying \(k\), we can see that each fixed $(x_1,\ldots,v_r)\in\Ab(H)$ corresponds to at most two \(\phi\)-twisted conjugacy classes in \(H\), which both have representatives of length \(|(x_1,\ldots,v_r)|\), as in Lemma \ref{lem:conjlength}. Considering all conjugators will not increase the number of conjugacy classes of a given length, so we have \(c_H^{\phi}(n)\preccurlyeq \beta_{\mathrm{Ab}(H)}(n) \sim n^{2r+s} \preccurlyeq c_H(n)\).
		
		\item Now let \(\varepsilon=1\), and so equation \eqref{eq:twistedconj} becomes
		\begin{equation*}
			\phi(x)hx^{-1} = \lift{\Big( (l_1,\ldots,j_r)(M-I) + (x_1,\ldots,v_r) \Big)} c^{w +f(l_1,\ldots,j_r)}.
		\end{equation*}
		We think of (the Cayley graph of) $H$ embedded in $\R^{2r+s+1}$ via Mal'cev coordinates (see Definition \ref{def:malcev}), and see that the projection of a \(\phi\)-twisted conjugacy class \([h]^{\phi}\) onto the first \(2r+s\) coordinates (i.e. the image under the abelianisation map) is a coset of the image of \(M-I\). That is,
		\[\overline{[h]^{\phi}} = \bar{h} + \mathrm{Im}(M-I)\subset \Z^{2r+s}.\]
		Write \(\mathrm{rk}(M-I)\) for the dimension of the image of \(M-I\). Lemma \ref{lem:Ucosets} implies that the number of distinct images of \(\phi\)-twisted conjugacy classes in the abelianisation \(\Z^{2r+s}\) that have length at most \(n\) is \(\cO(n^{2r+s-\mathrm{rk}(M-I)})\). Note that if \(h\in H\) has length at most \(n\) then its image in the abelianisation will also have length at most \(n\) in \(\Z^{2r+s}\). So the number of distinct \(\phi\)-twisted conjugacy classes in the ball of radius \(n\) in \(H\) is \(\cO(n^{2r+s-\mathrm{rk}(M-I)+2})\), with the extra \(n^2\) coming from the growth of the \(c\) axis. We consider sub-cases depending on the dimension of the image of \(M-I\).
		\begin{enumerate}
			\item For \(2\leq\mathrm{rk}(M-I)\leq 2r+s\), we have \[c_H^{\phi}(n)=\cO(n^{2r+s-\mathrm{rk}(M-I)+2}) \preccurlyeq n^{2r+s}\preccurlyeq c_H(n)\] as required.
			
			\item Consider the case \(\mathrm{rk}(M-I)=1\). Since the kernel of \(M-I\) must be non-trivial in this case, we can fix a non-trivial element \(\mathbf{b}\in\ker(M-I)\). Then we have
			\[\left\{\bar{h}c^{w+f(\eta\mathbf{b})}\mid \eta\in\Z\right\}\subset[\bar{h}c^w]^\phi=[h]^{\phi}.\]
			Now \(f(\eta\mathbf{b})\) is a quadratic polynomial in the single variable \(\eta\) with zero constant term. Considering just those \(\eta\) in the range \([-n,n]\) yields at least \(n\) distinct elements of \(\bar{h}\langle c\rangle \cap [h]^{\phi}\), all with \(c\)-coordinate of length \(\cO(n^2)\) (since \(w=\cO(n^2)\), \(f\) has degree 2, and \(\mathbf{b}\) is constant), and hence contained in (some constant homothety of) the \(n\)-ball in \(H\). So amongst the \(\cO(n^2)\) elements of any \(\langle c\rangle\)-coset in the \(n\)-ball, there can be at most \(\cO(n)\) distinct \(\phi\)-twisted conjugacy classes.
			Combining this with the above argument again yields the claimed upper bound \[c_H^{\phi}(n)=\cO(n^{2r+s-1+1}) \preccurlyeq n^{2r+s} \preccurlyeq c_H(n).\]
			
			\item The final case is when \(\mathrm{rk}(M-I)=0\), so \(M=I\) and \(\phi\in\ker\theta\). In this specific case we can be more explicit about the function \(f\) in the calculation at \eqref{eq:twistedconj}. We have
			\begin{align*}
				\phi(x)hx^{-1} &= z_1^{l_1}\cdots b_r^{j_r}c^{k+p(l_1,\ldots,j_r)}hc^{-k}b_r^{-j_r}\cdots z_1^{-l_1} \\
				&= hc^{p(l_1,\ldots,j_r)+x_1l_1+\cdots+v_rj_r} \\
				&= \lift{(x_1,\ldots, v_r)} c^{w + (x_1+\lambda_1)l_1 + \cdots + (v_r+\lambda_{s+2r})j_r}
			\end{align*}
			where, as in the proof of Proposition \ref{prop:autoHexplicit}, \(p\) is a linear function with coefficients \(\lambda_i\) determined by \(\phi\).
			
			At each point \((x_1,\ldots,v_r)\) in the abelianisation, there are \(\gcd(x_1+\lambda_1,\ldots,v_r+\lambda_{s+2r})\) \(\phi\)-twisted conjugacy classes, all of length (asymptotically) equal to the length of \(z_1^{x_1}\cdots b_r^{v_r}\). Thus by Corollary \ref{cor:gcdoffset}, we have \(c_H^{\phi}(n)\sim c_H(n)\). 
		\end{enumerate}

	\end{enumerate}
\end{proof}

\subsection{A Conjecture}

As mentioned above, Hull and Osin \cite{OsinHull} exhibit a finitely generated group with exponential conjugacy growth function, possessing a finite index subgroup with only two conjugacy classes, and so conjugacy growth fails to be a commensurability invariant in general. On the other hand, the conjugacy growth of a virtually abelian group is equivalent to its standard growth (see Proposition \ref{prop:vabQI}), and so conjugacy growth is a quasi-isometry invariant in this very restricted context. Theorem \ref{thm:virtH} tells us that conjugacy growth is a commensurability invariant amongst class 2 nilpotent groups with infinite cyclic derived subgroup.

We ask the following natural question.
\begin{question}\label{que:qi}
	In which other classes of finitely generated groups is conjugacy growth a quasi-isometry invariant?
\end{question}
In light of Proposition \ref{prop:twistedconj}, a detailed understanding of (finite-order) automorphisms would be valuable in understanding how conjugacy growth behaves under finite extensions. The following Theorem of Pansu gives insight into the nature of quasi-isometry in nilpotent groups.
\begin{theorem}[\cite{FarbMosher},\cite{Pansu}]\label{thm:Pansu}
	For a finitely generated nilpotent group $G$, the numbers \(r_i\), the torsion-free ranks of the quotients \(G^{(i+1)}/G^{(i)}\), are quasi-isometry invariants.
\end{theorem}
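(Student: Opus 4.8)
The plan is to route the argument through asymptotic cones, following Pansu. The key structural input is that for a finitely generated nilpotent group $G$ with a word metric, the asymptotic cone (with respect to any non-principal ultrafilter and scaling sequence) exists, is independent of these choices, and is bi-Lipschitz equivalent to a \emph{Carnot group} $G_\infty$: the simply connected graded nilpotent real Lie group whose Lie algebra is the associated graded object $\mathfrak{g}_\infty = \bigoplus_{i\geq 0}\big(G^{(i)}/G^{(i+1)}\big)\otimes_\Z\R$, with bracket induced by the group commutator and with grading inducing the one-parameter family of Carnot dilations, equipped with a left-invariant Carnot--Carath\'eodory metric. By construction the dimension of the degree-$i$ graded piece is exactly $r_i$, so it suffices to prove that the \emph{graded} Lie algebra $\mathfrak{g}_\infty$ is a quasi-isometry invariant of $G$.

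The second step is functoriality of the cone construction. A $(\lambda,C)$-quasi-isometry $f\colon G\to H$ passes to ultralimits to give a bi-Lipschitz homeomorphism $f_\infty\colon G_\infty\to H_\infty$: the rescaling in the cone construction absorbs the additive constant $C$, turning coarse multiplicative control into genuine bi-Lipschitz control, while coarse surjectivity becomes honest surjectivity in the limit. Hence a quasi-isometry between nilpotent groups yields a bi-Lipschitz equivalence of their associated Carnot groups, and the whole problem reduces to a statement internal to sub-Riemannian geometry: bi-Lipschitz equivalent Carnot groups have isomorphic graded Lie algebras.

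The heart of the matter, and the step I expect to be the main obstacle, is Pansu's differentiation theorem, the Carnot-group analogue of Rademacher's theorem. It asserts that a bi-Lipschitz map between Carnot groups is Pansu-differentiable almost everywhere with respect to Haar measure, and that at each point of differentiability the Pansu derivative is a dilation-equivariant (graded) Lie group homomorphism; because $f_\infty$ is bi-Lipschitz, this derivative is invertible almost everywhere and hence a graded isomorphism $\mathfrak{g}_\infty(G)\to\mathfrak{g}_\infty(H)$. Establishing this requires real sub-Riemannian geometric measure theory: controlling horizontal derivatives along the bracket-generating distribution, showing that the difference quotients under the Carnot dilations converge, and verifying that the limit respects both the group law and the grading. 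Granting this, the existence of even a single point of differentiability produces a graded Lie algebra isomorphism $\mathfrak{g}_\infty(G)\cong\mathfrak{g}_\infty(H)$, and comparing dimensions in each degree gives $r_i(G)=r_i(H)$ for every $i$, as required.

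As a consistency check, and to isolate the single piece of information obtainable by elementary means, note that the one linear combination $d=\sum_{i\geq 0}(i+1)r_i$ is already a quasi-isometry invariant without any of this machinery: by Theorem \ref{thm:BG} it is the degree of polynomial growth of $G$, and standard growth is a quasi-isometry invariant by Proposition \ref{prop:betaQI}. This recovers exactly the homogeneous (Hausdorff) dimension of $G_\infty$, itself a bi-Lipschitz invariant, but pins down only this one weighted sum of the ranks. Separating the individual $r_i$ is precisely what forces the passage from a single numerical invariant to the full graded Lie algebra, and hence the use of Pansu differentiation.
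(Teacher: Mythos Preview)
The paper does not prove this theorem; it is quoted as a black-box result from the cited references \cite{Pansu} and \cite{FarbMosher}, and is used only to motivate Conjecture~\ref{conj:nilp} (and tangentially in the proof of Lemma~\ref{lem:fisubgroup}, where the weaker Bass--Guivarc'h statement already suffices). So there is no proof in the paper to compare your proposal against.

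That said, your sketch is the correct outline of Pansu's original argument: pass to the asymptotic cone to obtain the associated Carnot group, use functoriality to upgrade a quasi-isometry to a bi-Lipschitz map of Carnot groups, and then invoke Pansu differentiation to extract a graded Lie algebra isomorphism. Your identification of the differentiation step as the substantive analytic core is accurate, and your closing remark that the Bass--Guivarc'h degree $d=\sum(i+1)r_i$ is the only rank information recoverable by elementary growth arguments is exactly right. As written this is a roadmap rather than a proof---each of the three steps is a theorem in its own right---but as a summary of the literature it is faithful.
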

This Theorem suggests that ``geometrically speaking", the numbers \(r_i\) define the structure of a finitely generated nilpotent group. In light of this, and Theorem \ref{thm:virtH}, we venture the following conjecture, which would imply that conjugacy growth was a quasi-isometry invariant amongst (virtually) nilpotent groups.
\begin{conjecture}\label{conj:nilp}
	The conjugacy growth of a finitely generated nilpotent group $G$ depends only on the numbers \(r_i\).
\end{conjecture}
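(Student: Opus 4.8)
The plan is to imitate the strategy that succeeds for class $2$ groups with infinite cyclic derived subgroup, pushing it to the general nilpotent setting via the associated graded Lie ring. First I would fix a Mal'cev basis adapted to the lower central series, so that an element is recorded by its coordinates in each graded piece $G^{(i)}/G^{(i+1)}\cong\Z^{r_i}\times(\text{torsion})$. As in Lemma \ref{lem:conjstructure}, conjugation fixes the image of an element in the abelianisation and moves it only within the cosets swept out by the iterated commutators $[\,\cdot\,,g]$, so the conjugacy class of $g$ is a coset of the subgroup of the next filtration stage generated by these brackets. The first step is to make this precise in higher class: to show that $[g]$ projects to a single point of $\mathrm{Ab}(G)$ and, fibrewise over that point, is a coset of an explicitly described subgroup whose index is governed by greatest common divisors of the bracket data.

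Granting such a description, the second step is to express $c_G(n)$, up to the equivalence $\sim$, as a sum over $\mathrm{Ab}(G)$ (and recursively over the lower graded pieces) of the number of classes lying above each point, weighted by the length contribution of the deeper coordinates. This is the exact analogue of the bound
\[
\beta_{\langle c\rangle}(n)+\sum_{\bar\alpha\in B_{\ell_1}(n-2)}g(\bar\alpha)\;\le\;c_{H_r}(n)\;\le\;\beta_{\langle c\rangle}(n)+\sum_{\bar\alpha\in B_{\ell_1}(n)}g(\bar\alpha)
\]
used in the proof of Theorem \ref{thm:Hconjugacy}. The third step is to evaluate these sums asymptotically using the GCD-sum estimates of Corollary \ref{cor:gcdsum} together with their higher-dimensional generalisations. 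The point of the whole scheme is that the ambient volume factors are controlled by Bass--Guivarc'h (Theorem \ref{thm:BG}), hence by the $r_i$ alone, while Pansu's theorem (Theorem \ref{thm:Pansu}) confirms that the $r_i$ are the only quasi-isometry data present; so if the orbit-counting sums could be shown to depend only on the dimensions of the graded pieces, the conjecture would follow.

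The main obstacle is precisely the behaviour of these GCD-type sums, and it is already visible in class $2$. The asymptotic type of $\sum \gcd(\bar\alpha)$ depends on the \emph{dimension} of the space over which the bracket varies: Lemma \ref{lem:FF} produces a $\log n$ factor in dimension $2$ but none in dimension $\ge 3$. In $H_r$ the bracket data ranges over $\Z^{2r}$, so the logarithm appears exactly when the Heisenberg rank is $1$, which is why Theorem \ref{thm:Hconjugacy} separates $r=1$ from $r\ge2$. The delicate point is that this dimension --- twice the Heisenberg rank --- is not read off from $(r_0,r_1)=(2r+s,1)$, since $s$ and $r$ trade off against one another. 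A proof of the conjecture as stated must therefore show that the rank of the bracket form $\Lambda^2\mathrm{Ab}(G)\to G^{(1)}/G^{(2)}$, and its higher-class analogues, is itself forced by the numbers $r_i$; controlling this finer bracket data, and reducing its apparent influence back to the $r_i$, is the crux on which the entire argument turns and the step I expect to be genuinely hard.
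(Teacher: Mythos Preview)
The statement is a \emph{conjecture}; the paper offers no proof, so there is nothing to compare your proposal against at that level. Your outline of the strategy --- Mal'cev coordinates, fibrewise description of conjugacy classes as cosets indexed by bracket data, reduction to GCD sums --- is the natural extrapolation of the paper's method and is reasonable as a plan.

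However, the obstacle you flag at the end is not merely hard: taken at face value it is a \emph{counterexample} to the conjecture as literally stated. You observe that $(r_0,r_1)$ does not determine the Heisenberg rank, since $s$ and $r$ trade off in $r_0=2r+s$. Now apply Theorem~\ref{thm:Hconjugacy} directly: the group $\Z^2\times H_1$ has $(r_0,r_1)=(4,1)$ and conjugacy growth $\sim n^4\log n$, while $H_2$ also has $(r_0,r_1)=(4,1)$ but conjugacy growth $\sim n^4$. These are inequivalent, so the numbers $r_i$ alone do \emph{not} determine conjugacy growth even within the class of groups the paper handles. Your hoped-for step --- that the rank of the bracket form $\Lambda^2\Ab(G)\to G^{(1)}/G^{(2)}$ is forced by the $r_i$ --- simply fails here: both groups have $r_0=4$, $r_1=1$, but the bracket form has rank $2$ in one case and rank $4$ in the other.

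What this really shows is that Conjecture~\ref{conj:nilp} needs to be read more generously than its wording suggests. The introduction phrases it as dependence on ``the structure of the lower central series'', and Pansu's actual theorem makes the associated graded Lie algebra (over $\R$) a quasi-isometry invariant, not just the ranks $r_i$. The two groups above have non-isomorphic graded Lie algebras and are not quasi-isometric, so they do not threaten Question~\ref{que:qi}. The honest target for your strategy is therefore: show that conjugacy growth depends only on the graded Lie algebra, i.e.\ on the $r_i$ \emph{together with} the bracket maps. Your GCD-sum machinery is well suited to that refined statement; it cannot establish the conjecture as written, because that version is already false.
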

A natural next step to approach this conjecture would be to extend Theorems \ref{thm:Hconjugacy} and \ref{thm:virtH} to include groups whose derived subgroup is \emph{virtually} cyclic, which would imply that conjugacy growth is a quasi-isometry invariant amongst this class of nilpotent groups.

\section*{Acknowledgments}

The author was partially supported by EPSRC grant EP/R035814/1 and would like to thank Alex Bishop, Turbo Ho, Andrei Jaikin, and Matthew Tointon for very early discussions on various aspects of this work. Thanks are also due to the anonymous referee for many useful comments and in particular for suggesting the upgrading of Proposition \ref{prop:twistedconj} from a sufficient condition to an equivalence.


\section*{Competing Interests}
The author declares none.


\bibliography{references}{}
\bibliographystyle{plain}

\bigskip

\textsc{Alex Evetts, Heilbronn Institute for Mathematical Research and Department of Mathematics, The University of Manchester, Manchester, M13 9PL, UK
}

\emph{E-mail address}{:\;\;}\texttt{alex.evetts@manchester.ac.uk}

\end{document}